\g@addto@macro\bfseries{\boldmath} 
\numberwithin{figure}{section}
\title{Barcode Embeddings for Metric Graphs}
\author{Steve Oudot, Elchanan Solomon}
\date{\today}
\begin{document}
\newtheorem{theorem}{Theorem}[section]
\newtheorem{claim}[theorem]{Claim}
\newtheorem{conj}[theorem]{Conjecture}
\newtheorem{lemma}[theorem]{Lemma}
\newtheorem*{exercise}{Exercise}
\newtheorem{example}{Example}
\newtheorem{prop}[theorem]{Proposition}
\newtheorem{problem}{Problem}
\newtheorem*{theorem*}{Theorem}
\newtheorem*{prop*}{Proposition}
\newtheorem*{lemma*}{Lemma}
\newtheorem*{claim*}{Claim}
\newtheorem{corollary}[theorem]{Corollary}
\newtheorem*{corollary*}{Corollary}
\theoremstyle{remark}
\newtheorem*{notation}{Notation}
\newtheorem{remark}[theorem]{Remark}
\newtheorem{observation}[theorem]{Observation}
\theoremstyle{definition}
\newtheorem{counterexample}[theorem]{Counterexample}
\newtheorem{dictionary}[theorem]{Dictionary}
\newtheorem{definition}[theorem]{Definition}
\newtheorem*{moral}{Moral}
	\newcommand{\lp}[1]{\lVert #1 \rVert_{L^p}}
	\newcommand{\id}{\operatorname{id}}
	\newcommand{\norm}[1]{\left\lVert#1\right\rVert}
	\newcommand{\supp}[1]{\operatorname{supp}(#1)}
	
	\newcommand{\C}{\mathbb{C}}
\newcommand{\R}{\mathbb{R}}
\newcommand{\Rop}{\mathbb{R}^{\text{op}}}
\newcommand{\Rext}{\mathbb{R}_{\text{Ext}}}
\newcommand{\N}{\mathbb{N}}
\newcommand{\Z}{\mathbb{Z}}
\newcommand{\X}{\mathbb{X}}
\newcommand{\Y}{\mathbb{Y}}
\newcommand{\Reeb}{\mathrm{R}}
\newcommand{\Ord}{\mathrm{Ord}}
\newcommand{\Ext}{\mathrm{Ext}}
\newcommand{\Rel}{\mathrm{Rel}}
\newcommand{\Dg}{\mathrm{Dg}}
\newcommand{\Mapper}{\mathrm{M}}
\newcommand{\MMapper}{\overline{\mathrm{M}}}
\newcommand{\im}{\mathrm{im}}
\newcommand{\Shift}{\mathrm{Shift}}
\newcommand{\Merge}{\mathrm{Merge}}
\newcommand{\Split}{\mathrm{Split}}
\newcommand{\Crit}{\mathrm{Crit}}
\newcommand{\Rips}{\mathrm{Rips}}
\newcommand{\disth}{d_{\rm H}}
\newcommand{\distfd}{d_{\rm FD}}
\newcommand{\distb}{d_{\rm B}}
\newcommand{\hdistfd}{\hat{d}_{\rm FD}}
\newcommand{\hdistb}{\hat{d}_{\rm B}}
\newcommand{\symdiff}{{\scriptstyle\triangle}}
\newcommand{\e}{\varepsilon}
\newcommand{\proba}[1]{\mathbb{P}\left(#1\right)}
\newcommand{\tf}{\tilde{f}}
\newcommand{\tg}{\tilde{g}}
\newcommand{\hei}{{\rm height}}
\newcommand{\ran}{{\rm range}}
\newcommand{\const}{22}
\newcommand{\RS}{{\sf Reeb}}
\newcommand{\accolade}[1]{\left\{\begin{array}{l}#1\end{array}\right.}

	\begin{abstract}
	Stable topological invariants are a cornerstone of persistence theory and applied topology, but their discriminative properties are often poorly-understood. In this paper we study a rich homology-based invariant first defined by Dey, Shi, and Wang in \cite{dey2015comparing}, which we think of as embedding a metric graph in the barcode space. We prove that this invariant is locally injective on the space of finite metric graphs and globally injective on a generic subset. 
	\end{abstract}
		\maketitle
	\thispagestyle{empty}
	
	\tableofcontents

	\clearpage
	\section{Introduction}
	
Much of applied geometry and topology is concerned with the extraction of computable and informative invariants from complex data sets. As is generally the case in data analysis, there is no ``perfect" or ``ideal" invariant. More discriminative invariants are generally more challenging to compute or compare, and may not be as stable as their simpler counterparts. A useful invariant will be one that balances these competing interests and thus finds a niche of successful application.\\
    
   For example, we can associate to a finite graph its diameter or average degree. As real numbers, both quantities are easy to store and compare. However, the average degree is much easier to approximate than the diameter. In terms of discriminativity, both fall very short from being injective, as there are widely different graphs with the same diameter or average degree. Lastly, both invariants are stable to certain deformations of the graph structure and unstable to others. With this understanding of our invariants, we have some sense of when they can, and should, be used.\\
    
     To give a more complex example, we may start with a compact metric space and compute the barcode of its \v Cech or Rips filtration. Depending on the size of the initial metric space (assuming it is finite) this process may be computationally intensive, and in addition the resulting barcode may not be much simpler than the metric space that produced it. However, the bottleneck metric between barcodes, unlike the Gromov-Hausdorff metric, is practically computable. On the stability side, the operator that maps a compact metric space to the barcode of its \v Cech or Rips filtration is Lipschitz continuous~\cite{chazal2014persistence}. However, on the discriminativity side, the operator performs poorly as, for instance, it is unable to discriminate between different geodesic trees---see e.g. Lemma~2 in~\cite{gasparovic2017complete} for the case of the \v Cech filtration, and Lemma~\ref{lem:Cech-Rips-hyper} in Appendix~\ref{sec:Rips-trees} for the case of the Rips filtration.\\

    The goal of this paper is to consider a rich persistence-based invariant, originally studied by Dey et al. in \cite{dey2015comparing}, that contains a persistence diagram for every point along a metric graph (not solely the vertices). This invariant can be used to define a pseudometric on the space of metric graphs via the Hausdorff metric on subsets of the space of persistence diagrams. The key results of \cite{dey2015comparing} are that this invariant is stable and approximable, and indeed Dey et. al. provide a polynomial-time algorithm to compute individual invariants and compare the Hausdorff distances between them. Our focus is on the remaining matter of discriminativity/injectivity: we show that the map that assigns a metric graph to its collection of barcodes is injective on a generic set of graphs.

		\subsection*{Prior Work} \label{prior work}
	
	Beyond the setting of \cite{dey2015comparing} considered here, there have been few attempts so far at solving inverse problems in persistence theory. In \cite{turner2014persistent}, Turner et al. demonstrated that a simplical complex $X \subset \mathbb{R}^3$ can be recovered from a sphere's worth of filtrations. Whereas the approach of that paper is to employ extrinisically defined filtrations, our approach in this paper is intrinsic and local. In \cite{gameiro2016continuation}, Gameiro et al. developed an iterative technique for addressing inverse problems on point clouds. The techniques in \cite{gameiro2016continuation} show how a variation in barcode space can be used to produce a corresponding variation of  point clouds, but since there are multiple point clouds with the same barcode it is not possible to construct an inverse map from the space of barcodes to that of point clouds. The persistence distortion distance requires us to calculate many barcodes for the same space, and we show that this generically suffices for the existence of an inverse. Another related line of research is that of Gasparovic et al. in \cite{gasparovic2017complete}, where the 1-dimensional intrinsic \v{C}ech persistence diagram of a graph is related to the lengths of its shortest system of loops. This allows one to discern the minimal cycle lengths of a graph from a single persistence diagram. Finally, there is also the recent work of Curry in \cite{curry2017fiber} analyzing the fiber of the persistence map for functions on the interval.
	
\subsection*{Main Results}

The central contributions of this paper are a suite of inverse results for the barcode embedding first studied in \cite{dey2015comparing}:
\begin{itemize}
	\item Counterexample \ref{counterexample} demonstrates that the barcode embedding is not injective by presenting a pair of non-isometric trees with the same barcode embedding\footnote{\cite{dey2015comparing} also provides a counterexample to injectivity. Our example is quite different and particularly simple.}. By gluing small copies of these trees to other graphs, we can find counterexamples to injectivity in any Gromov-Hausdorff ball.
	\item Theorem \ref{btlocalinj} demonstrates that the barcode embedding is locally injective, in that the fiber of any given embedding (which is a subset of metric graphs) has no accumulation points in the Gromov-Hausdorff topology.
	\item Theorem \ref{psiinjbtinj} identifies a subset $S$ of the space of metric graphs on which the barcode embedding is globally injective. This subset is defined explicitly by a simple topological condition.
	\item Theorem \ref{densebamboo} asserts that this subset $S$ is dense in the space of metric graphs, equipped with the Gromov-Hausdorff topology.
	\item Theorem \ref{btgeninj} provides a genericity result for the subset $S$. It states that for most combinatorial graphs, almost every choice of edge lengths produces a metric graph in $S$. The combinatorial graphs excluded are precisely those for which no choice of edge lengths produces graphs in $S$.
	\item Theorem \ref{btgeninj2} extends Theorem \ref{btgeninj} by making no assumptions on combinatorial type. It states that excluding a measure-zero set of edge lengths for each combinatorial graph results in collection of metric graphs on which the barcode embedding is injective.  
\end{itemize}

In summary, although the barcode embedding is not an injective invariant for all metric graphs, this can be ameliorated by introducing a generic topological condition.

	\section{Background}\label{background}

    \subsection{Topological and Metric Graphs}

    \begin{definition}
    	A \emph{metric (topological) graph} is a metric (topological) space obtained by gluing together finitely many metric (topological) spaces isometric (homeomorphic) to line segments, such that only endpoints may be shared between segments. Note that a metric (topological) graph may contain self-loops or multiple edges between vertices.\\
    	
    	A topological graph can be turned into a metric graph by assigning a positive length to each of its constituent edges; up to isometry, all metric graphs can be obtained in this way. 
    \end{definition}

	\begin{definition}
		We will denote by $\mathbf{MGraphs}$ the space of all \emph{connected} metric graphs. Note that, by definition, $\mathbf{MGraphs}$ contains only graphs with finitely many edges and vertices.
	\end{definition}

    \begin{definition}
    \label{def:simplecycle}
    A \emph{simple cycle} in a graph $G$ is a sequence of vertices $v_{0}, v_{1}, \cdots, v_{n-1}, v_{n} = v_{0}$ with $[v_{0},v_{1}]$ an edge in $G$, and no vertex repeated aside from the \emph{base vertex} $v_{0} = v_{n}$.
    \end{definition}
    
    \begin{definition}
\label{def:selfloop}
A \emph{topological self-loop} in a metric graph $G$ is a simple cycle where every non-base vertex has valence equal to two. Any metric graph $G$ is a isometric to a graph $G'$ obtained from $G$ by deleting vertices of valence two and merging the adjacent edges. A topological self-loop in $G$ becomes a proper self-loop in the graph $G'$.
\end{definition}
    
    As a subspace of the space of compact metric spaces, $\mathbf{MGraphs}$ enjoys a natural metric, the Gromov-Hausdorff metric. This, in turn, induces a topology on $\mathbf{MGraphs}$ which we call the Gromov-Hausdorff topology.
    
    \begin{definition}
    \label{ghdef}
    Let $X,Y$ be any two compact metric spaces. A correspondence $\mathcal{M}$ between $X$ and $Y$ is a subset of $X \times Y$ whose projections to $X$ and $Y$ are surjective, i.e. $\pi_{X}(\mathcal{M}) = X$ and $\pi_{Y}(\mathcal{M}) = Y$. Intuitively, a correspondence pairs up elements of our two spaces so that each element of one space is paired up with at least one element of the other. The cost of a correspondence $\mathcal{M}$ is defined as
    \[\operatorname{cost}(\mathcal{M}) = \sup_{(x,y),(x',y') \in \mathcal{M}} |d_{X}(x,x') - d_{Y}(y,y')|\]
    
    The Gromov-Hausdorff distance between $X$ and $Y$ is then defined to be the infimum over the costs of all correspondences between them.
    \[d_{GH}(X,Y) = \inf_{\mathcal{M}} \operatorname{cost}(\mathcal{M})\]
    \end{definition}
    
    There are many other formulations of the Gromov-Hausdorff distance: interested readers can consult \cite{burago2001course}. An important result is that for compact spaces (as we are considering here), the infimum in the above definition is actually a minimum, i.e. it is realized by some (not necessarily unique) minimal cost correspondence, see \cite{ivanov2016realizations}.


\begin{definition}
A pointed metric space is a pair $(X,p)$ where $X$ is a metric space and $p \in X$. If we take $X$ to be a metric graph, then we call $(X,p)$ a pointed metric graph,
\end{definition}


	\subsection{Metric-Measure Graphs}
\label{sec:mmg}
A metric-measure graph is a metric graph equipped with a Borel probability measure. For a general exposition on metric-measure spaces as a whole, see \cite{memoli2011gromov}. We will restrict our attention to metric-measure graphs of full support, i.e. pairs $(G,\mu_G)$ with $G = \supp{\mu_G}$, and denote by $\mathbf{MMGraphs}$ the space of all such objects. This space can be endowed with a variety of metrics, with one choice being the $ \mathfrak{D}_{\infty}$ metric defined in \cite{memoli2011gromov}, which we recall here. Given a pair of compact metric measure spaces $(X,d_X,\mu_X)$ and $(Y,d_Y,\mu_Y)$, a metric coupling $\pi$ is a measure on $X \times Y$ with $\mu_X$ and $\mu_Y$ as marginals. The set of all metric couplings is denoted $\mathcal{M}(\mu_X,\mu_Y)$. For a fixed metric coupling $\pi \in \mathcal{M}(\mu_X,\mu_Y)$ with support $\supp{\pi}$, we define the \emph{cost} of that coupling as
\[J_{\infty}(\pi) = \frac{1}{2} \sup_{(x,y),(x',y') \in \supp{\pi}} \Gamma_{X,Y}(x,y,x',y')\]
where 
\[\Gamma_{X,Y}(x,y,x',y') = |d_{X}(x,x') - d_{Y}(y,y')|\]

The $\mathfrak{D}_{\infty}$ metric is then defined to be the infimum of the cost $J_{\infty}(\pi)$ over all possible couplings $\pi$.
\[\mathfrak{D}_{\infty}((X,\mu_x),(Y,\mu_Y)) = \inf_{\pi \in \mathcal{M}(\mu_X,\mu_Y)} J_{\infty}(\pi)\]

The support of a measure coupling is always a correspondence (Lemma 2.2 of \cite{memoli2011gromov}), but not every correspondence between the supports $\supp{\mu_X}$ and $\supp{\mu_Y}$ of two measures comes from a measure coupling; thus, this quantity is generally larger than the Gromov-Hausdorff distance between supports.\\

\subsection{Morse-Type Functions}
\label{sec:Morse-type}

\begin{definition}\label{def:Morse-type}
A continuous real-valued function $f$ on a topological space $X$ 
is \emph{of Morse type} if:

(i) there is a finite set $\text{Crit}(f)=\{a_1<...<a_n\}\subset\R$, 
called the set of \emph{critical values},
such that over every open interval $(a_0=-\infty,a_1),...,(a_i,a_{i+1}),...,(a_n,a_{n+1}=+\infty)$
there is a compact and locally connected space $Y_i$  
and a homeomorphism $\mu_i:Y_i\times(a_i,a_{i+1})\rightarrow X^{(a_i,a_{i+1})}$
such that $\forall i=0,...,n, f|_{X^{(a_i,a_{i+1})}}=\pi_2\circ\mu_i^{-1}$, where
$\pi_2$ is the projection onto the second factor;

(ii) $\forall i=1,...,n-1,\mu_i$ extends to a continuous function $\bar{\mu}_i:Y_i\times[a_i,a_{i+1}]\rightarrow X^{[a_i,a_{i+1}]}$;
 similarly, $\mu_0$ extends to $\bar{\mu}_0:Y_0\times(-\infty,a_1]\rightarrow X^{(-\infty,a_1]}$
and $\mu_n$ extends to $\bar{\mu}_n:Y_n\times[a_n,+\infty)\rightarrow X^{[a_n,+\infty)}$;

(iii) Each levelset $f^{-1}(t)$ has a finitely-generated homology.
\end{definition}

Let us point out that a Morse function is also of Morse type, and that
its critical values remain critical in the definition above.  Note
that some of its regular values may be termed critical as well in this
terminology, with no effect on the analysis.\\

\subsection{Extended Persistent Homology}

Extended persistence is a modification of sublevel set persistence that combines the information contained in the sublevel set and superlevel set filtrations of a height function on a topological space. This construction was defined in great generality by Cohen-Steiner et. al. \cite{cohen2009extending}; for the sake of completeness, we provide a concise definition below.\\

Let $f$ be a real-valued function on a topological space $X$.  The
family $\{X^{(-\infty, \alpha]}\}_{\alpha\in\R}$ of sublevel sets of
  $f$ defines a {\em filtration}, that is, it is nested
  w.r.t. inclusion: $X^{(-\infty, \alpha]}\subseteq X^{(-\infty,
  \beta]}$ for all $\alpha\leq\beta\in\R$. The family $\{X^{[\alpha,
      +\infty)}\}_{\alpha\in\R}$ of superlevel sets of $f$ is also
    nested but in the opposite direction: $X^{[\alpha,
        +\infty)}\supseteq X^{[\beta, +\infty)}$ for all
        $\alpha\leq\beta\in\R$. We can turn it into a filtration by
        reversing the order on the real line. Specifically, let
        $\Rop=\{\tilde{x}\ |\ x\in\R\}$, ordered by
        $\tilde{x}\leq\tilde{y}\Leftrightarrow x\geq y$. We index the
        family of superlevel sets by $\Rop$, so now we have a
        filtration: $\{X^{[\tilde\alpha,
            +\infty)}\}_{\tilde\alpha\in\Rop}$, with
          $X^{[\tilde\alpha, +\infty)}\subseteq X^{[\tilde\beta,
                +\infty)}$ for all
              $\tilde\alpha\leq\tilde\beta\in\Rop$.

Extended persistence connects the two filtrations at infinity as
follows. First, replace each superlevel set $X^{[\tilde\alpha, +\infty)}$ by
  the pair of spaces $(X, X^{[\tilde\alpha, +\infty)})$ in the second
    filtration. This maintains the filtration property since we have
    $(X, X^{[\tilde\alpha, +\infty)})\subseteq (X, X^{[\tilde\beta,
          +\infty)})$ for all
        $\tilde\alpha\leq\tilde\beta\in\Rop$. Then, let
        $\Rext=\R\cup\{+\infty\}\cup\Rop$, where the order is
        completed by $\alpha<+\infty<\tilde{\beta}$ for all
        $\alpha\in\R$ and $\tilde\beta\in\Rop$. This poset is
        isomorphic to $(\R, \leq)$. Finally, define the {\em extended
          filtration} of $f$ over $\Rext$ by:
\[
F_\alpha = X^{(-\infty, \alpha]}\ \mbox{for $\alpha\in\R$},\ F_{+\infty} = X \equiv (X,\emptyset)\text{ and }
F_{\tilde\alpha} = (X, X^{[\tilde\alpha, +\infty)})\ \mbox{for $\tilde\alpha\in\Rop$},
\]
where we have identified the space $X$ with the pair of spaces $(X,
\emptyset)$ at infinity. 
The subfamily $\{F_\alpha\}_{\alpha\in\R}$ is
  the \textit{ordinary} part of the filtration, while $\{F_{\tilde\alpha}\}_{\tilde\alpha\in\Rop}$ is the
  \textit{relative} part. 

Applying the homology functor $H_*$ to this filtration gives the so-called \textit{extended persistence module} $\mathbb{V}$ of $f$,
which is a family of vector spaces connected by linear maps induced by the
inclusions in the extended filtration.
For functions having finitely many critical values (i.e. those of \emph{Morse type}), like the distance from a fixed basepoint in a metric graph, the extended persistence module can be
decomposed as a finite direct sum of half-open {\em interval
  modules}---see e.g.~\cite{chazal2012structure}:
$\mathbb{V}\simeq\bigoplus_{k=1}^n \mathbb{I}[b_k, d_k)$,
where each summand $\mathbb{I}[b_k, d_k)$ is made of copies of the
  field of coefficients at every index $\alpha\in [b_k, d_k)$, and of
    copies of the zero space elsewhere, the maps between copies of the
    field being identities.  Each summand represents the lifespan of a
    {\em homological feature} (connected component, hole, void, etc.) within the
    filtration. More precisely, the {\em birth time} $b_k$ and {\em
      death time} $d_k$ of the feature are given by the endpoints of
    the interval.  Then, a convenient way to represent the structure
    of the module is to plot each interval in the decomposition as a
    point in the extended plane, whose coordinates are given by the
    endpoints. Such a plot is called the \textit{extended persistence
      diagram} of $f$, denoted $\Dg(f)$.  The distinction
    between ordinary and relative parts of the filtration allows us to
    classify the points in $\Dg(f)$ as follows:
\begin{itemize}
\item $p=(x,y)$ is called an {\em ordinary} point if $x,y\in\R$; 
\item $p=(x,y)$ is called a {\em relative} point if $\tilde{x},\tilde{y}\in\Rop$;
\item $p=(x,y)$ is called an {\em extended} point if $x\in\R,\tilde{y}\in\Rop$; 
\end{itemize}
Note that ordinary points lie strictly above the diagonal
$\Delta=\{(x,x)\ |\ x\in\R\}$ and relative points lie strictly below $\Delta$,
while extended points can be located anywhere, including on $\Delta$ (e.g. when a connected component 
lies inside a single critical level). 
It is common to
partition $\Dg(f)$ according to this classification:
$\Dg(f)=\Ord(f)\sqcup\Rel(f)\sqcup\Ext^+(f)\sqcup\Ext^-(f)$, where $Ext^{+}(f)$ are those extended points above the diagonal, $Ext^{-}(f)$ are those below the diagonal, and by
convention $\Ext^+(f)$ includes the extended points located on the
diagonal~$\Delta$.\\

Persistence diagrams are also commonly referred to as \emph{barcodes} in the literature, and we will use the notation {\bf Barcodes} to reference the space of these objects. This space enjoys a natural pseudometric: the \emph{bottleneck distance}, defined as follows. For two diagrams $D_1 = \{p_1, \cdots, p_n\}$ and $D_2 = \{q_1, \cdots, q_k\}$, one defines a matching $\mathcal{M}$ between $D_1$ and $D_2$ to be a pairing between a subset of points in $D_1$ and a subset of equal size of points in $D_2$, where all the points not in these subsets are thought of as being paired with the diagonal $\Delta$. The cost a pairing between points $p_i$ and $q_j$ is the $L^\infty$ distance between them. To define the cost of pairing a point with the diagonal, we first define $\pi_{\Delta}(p)$ to be the closest point to $p$ on the diagonal $\Delta$ in the $L^\infty$ metric. Then the cost of the pairing $(p,\Delta)$ is the $L^\infty$ distance between $p$ and $\pi_{\Delta}(p)$. The cost of a matching $\mathcal{M}$ is then the maximum cost of any of its pairings, and the bottleneck distance between $D_1$ and $D_2$ is the infimum of the cost of matchings between them. This is a pseudometric if we allow our diagrams to have points on the diagonal, but otherwise is a proper metric. { Note that $d_{B}$ is oblivious to the labels $\Ext, \Ord$, and $\Rel$, but not to the dimension. Likewise, in this paper, we assume that our persistence diagrams come labelled by dimension, but do not have any labels indicating if a particular point comes from ordinary, relative, or extended persistence.} For more on the theoretical and computational aspects of the bottleneck distance, see \cite{kerber2017geometry}.\\

In the setting of this paper, the utility of using extended persistence in place or ordinary persistence is twofold. Firstly, it will be crucial for stability, as adding a small loop to a graph produces a new graph which is similar to the original one in the Gromov-Hausdorff sense but will not have a similar ordinary persistence diagram, as it now contains a new feature that lives forever, and hence sits infinitely far away from the diagonal; not so in extended persistence, where that feature dies shortly before it is born. Secondly, for $G$ contractible (i.e. a tree), there is no interesting ordinary homology, but the identifications coming from the relative part of the filtration produce many features whose birth times and death times correspond to when boundary leaves appear and when branches merge.\\

We have the following general stability result:

\begin{theorem}[\cite{chazal2012structure}, \S 6.2]
    \label{perstable}
    Let $X$ be a topological space homeomorphic to the realization of a simplical complex, and $f,g : X \to \mathbb{R}$ two continuous functions whose sublevelset and superlevelset filtrations give extended persistence barcodes $B_f$ and $B_g$. Then $d_{B}(B_f,B_g) \leq \norm{f-g}_{\infty}$.
\end{theorem}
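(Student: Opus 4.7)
The plan is to prove this via the standard two-step strategy in persistence stability: first construct an $\varepsilon$-interleaving of the extended persistence modules of $f$ and $g$ at the scale $\varepsilon = \norm{f-g}_\infty$, then invoke the algebraic stability (isometry) theorem to convert that interleaving into a bottleneck matching.

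First I would unpack the pointwise inequality $|f(x)-g(x)| \leq \varepsilon$ into inclusions of filtration pieces. On the ordinary part, sublevelset inclusions $X^{(-\infty,\alpha]}_f \subseteq X^{(-\infty,\alpha+\varepsilon]}_g$ and the symmetric one with $f,g$ swapped are immediate and define the interleaving maps at the homology level. On the relative part, the analogous superlevelset inclusions $X^{[\alpha,+\infty)}_f \subseteq X^{[\alpha-\varepsilon,+\infty)}_g$ pass to pairs $(X,X^{[\alpha,+\infty)}_f) \subseteq (X,X^{[\alpha-\varepsilon,+\infty)}_g)$; re-indexed via $\Rop$, a decrease of $\alpha$ by $\varepsilon$ is exactly an increase of $\tilde\alpha$ by $\varepsilon$, so again we obtain $\varepsilon$-shift maps of pairs. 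Because the two halves of the extended filtration are glued at $+\infty$ through the common object $(X,\emptyset)$, these partial interleavings concatenate into a bona fide $\varepsilon$-interleaving of the two extended persistence modules over $\Rext$, which after identifying $\Rext$ with $(\R,\leq)$ is an $\varepsilon$-interleaving in the usual sense.

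Second, I would invoke the algebraic stability (isometry) theorem for q-tame persistence modules, e.g. as stated in Chazal--de Silva--Glisse--Oudot: any $\varepsilon$-interleaving of q-tame modules implies that the bottleneck distance between their barcodes is at most $\varepsilon$. The hypothesis that $X$ is homeomorphic to the realization of a simplicial complex together with the continuity of $f$ and $g$ ensures that the resulting sublevel/superlevel filtrations produce q-tame persistence modules at every homological degree, so the theorem applies and yields $d_B(B_f,B_g) \leq \varepsilon$.

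I expect the main obstacle not to be the interleaving itself (which is mechanical) but the bookkeeping at infinity: one must check that the reparametrization $\Rext \cong \R$ can be chosen to be an isometry on the relevant intervals, so that an $\varepsilon$-shift in $\Rext$-indices produces at most an $\varepsilon$ displacement in the $L^\infty$ coordinates of the plotted diagram points, including the extended points whose two coordinates live in different halves ($\R$ and $\Rop$). Since the bottleneck distance defined in this paper is oblivious to the $\Ord/\Rel/\Ext$ labels, this is a matter of choosing the reparametrization carefully rather than of proving anything new, and once it is set up the algebraic stability theorem delivers the bound.
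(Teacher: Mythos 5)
The paper does not prove Theorem \ref{perstable}; it is quoted from \cite{chazal2012structure}, \S 6.2, and the only thing the paper itself verifies is that the hypothesis applies in its setting (graphs can be subdivided into simplicial complexes). Your proposal correctly reconstructs the argument of that reference --- the $\varepsilon$-interleaving of the extended persistence modules over $\Rext$, with the two halves glued through the common term $(X,\emptyset)$ at $+\infty$, followed by algebraic stability for q-tame modules --- and you correctly identify the one genuine subtlety, namely the bookkeeping of the $\varepsilon$-shift on the $\Rop$ half and at $+\infty$, so there is nothing to add.
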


Since every topological graph can be turned into a simplical complex by adding extra vertices along self-loops and multiple edges, and since all the functions we will be considering are continuous, this result applies in our setting.

	\subsection{Reeb Graphs}
	\label{reebbackground}
	
	A construction that plays an important role in our analysis is that of a Reeb graph. We recall the definition here.
	
	\begin{definition}
	Given a topological space $X$ and a continuous function $f:X \to \mathbb{R}$, one can define an equivalence relation $\sim_{f}$ between points in $X$, where $x \sim_{f} y$ if only if $f(x) = f(y)$ and $x,y$ belong to the same  path-component of $f^{-1}(f(x)) = f^{-1}(f(y))$. The \emph{Reeb graph} $R_{f}(X)$ is the quotient space $X/\sim_f$, and since $f$ is constant on equivalence classes, it descends to a well-defined function on $R_{f}(X)$.\\
	
	The space $R_{f}(X)$ is equipped with the following metric.
	\begin{align*}d_{f}(x,x') = \min_{\pi: x \to x'} \left\{ \max_{t \in [0,1]} f \circ \pi(t) - \min_{t \in [0,1]} f \circ \pi(t)\right\} \\ \mbox{ where $\pi: [0,1] \to R_f$ ranges over all continuous paths from $x$ to $x'$ in $R_f$.}
	\end{align*}
	\end{definition}

	Let us denote by  $\mathbf{Reeb}$ the space of all compact Reeb graphs. $\mathbf{Reeb}$ admits a few natural metrics, with one common choice being the \emph{functional distortion distance}, or FD distance. It is defined as follows
	
	\begin{definition}[\cite{reebgraphdistance}, \S 3]
	\label{FDdef}
	\[d_{FD}(R_f,R_g) = \inf_{\phi,\psi} \max \left\{ \norm{f - g\circ \phi}_{\infty}, \norm{f \circ \psi - g}_{\infty}, \frac{1}{2}D(\phi,\psi) \right\}\]
	
	where
	\begin{itemize}
	\item $\phi: R_f \to R_g$ and $\psi: R_g \to R_f$ are continuous maps.
	\item $D(\phi,\psi) = \sup \left\{ |d_{f}(x,x') - d_{g}(y,y')| \mbox{ such that } (x,y),(x',y') \in C(\phi,\psi) \right\}$ where $C(\phi,\psi) = \left\{ (x,\phi(x))| x \in R_f \right\} \cup \left\{ (\psi(y),y) | y \in R_{g}\right\}$
	\end{itemize}
	
	\end{definition}
	
	Thus, the FD distance has three components, two that make sure that the approximating maps preserve the $f$- and $g$-functions, and a third which ensures that distances in $R_f$ and $R_g$ are preserved through the maps $\phi, \psi$. The following lemma states that the FD distance is well-defined on isometry classes of Reeb graphs.
	
	\begin{lemma}
	\label{FDzero}
	Let $R_f$ and $R_g$ be Reeb graphs. Suppose that $d_{FD}(R_f,R_g) = 0$. Then there are a pair of isometries $\phi: R_f \to R_g$ and $\psi: R_g \to R_f$ which preserve functions, i.e. $\forall x \in R_f$, $f(x) = g(\phi(x))$, and $\forall y \in R_g$, $g(y) = f(\psi(y))$.
	\end{lemma}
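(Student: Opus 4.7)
Since $d_{FD}(R_f, R_g) = 0$, the plan is to extract a limiting pair of isometries from a minimizing sequence via Arzelà–Ascoli, and then use compactness to upgrade isometric embeddings to genuine isometries. Concretely, choose continuous maps $\phi_n : R_f \to R_g$ and $\psi_n : R_g \to R_f$ such that $\|f - g\circ \phi_n\|_\infty$, $\|f\circ \psi_n - g\|_\infty$, and $\varepsilon_n := D(\phi_n,\psi_n)$ all tend to $0$. Since every pair $(x,\phi_n(x)),(x',\phi_n(x'))$ lies in $C(\phi_n,\psi_n)$, one immediately gets $|d_f(x,x') - d_g(\phi_n(x),\phi_n(x'))| \le \varepsilon_n$ uniformly in $x,x'$, i.e.\ each $\phi_n$ is an $\varepsilon_n$-approximate isometric embedding. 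The same holds for $\psi_n$.

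The first main step is to pass to a subsequential limit. The approximate-isometry bound shows that for $n$ large, $\phi_n$ is uniformly $1$-almost-Lipschitz, so the family $\{\phi_n\}$ is eventually equicontinuous; the finitely many earlier terms are uniformly continuous on the compact space $R_f$, so the full family is equicontinuous. Because $R_g$ is compact, Arzelà–Ascoli yields a uniformly convergent subsequence $\phi_n \to \phi$; applying the argument again to $\psi_n$ (along the same subsequence) gives $\psi_n \to \psi$ uniformly. Both $\phi$ and $\psi$ are continuous.

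Next, the three components of the FD distance pass to the limit. Uniform convergence and continuity of $f,g$ force $\|f - g\circ \phi\|_\infty = 0$ and $\|f\circ \psi - g\|_\infty = 0$, so $\phi$ and $\psi$ preserve the function values exactly. Passing to the limit in $|d_f(x,x') - d_g(\phi_n(x),\phi_n(x'))| \to 0$ (and the analogous statement for $\psi_n$) yields $d_g(\phi(x),\phi(x')) = d_f(x,x')$ and $d_f(\psi(y),\psi(y')) = d_g(y,y')$, so both $\phi$ and $\psi$ are distance-preserving, hence injective.

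The final step is to promote these isometric embeddings into genuine isometries. The composition $\psi\circ\phi : R_f \to R_f$ is an isometric self-embedding of the compact metric space $R_f$; by the classical fact that an isometric self-embedding of a compact metric space is surjective (see e.g.\ \cite{burago2001course}), $\psi\circ\phi$ is onto, which forces $\phi$ to be surjective. The symmetric argument applied to $\phi\circ\psi$ gives surjectivity of $\psi$. Combined with injectivity and function preservation established above, this produces the desired pair of function-preserving isometries. The subtlest point in the argument is the verification of equicontinuity uniform in $n$; everything else is essentially a continuity and compactness bookkeeping exercise.
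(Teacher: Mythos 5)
Your argument is correct and follows essentially the same route as the paper: both extract a subsequential limit of a minimizing sequence $(\phi_n,\psi_n)$ and observe that the vanishing distortion term makes the limits distance-preserving while the vanishing sup-norm terms make them function-preserving; the paper simply delegates the Arzel\`a--Ascoli extraction and the surjectivity upgrade to the proof of Theorem~7.3.30 in \cite{burago2001course}, whereas you carry them out explicitly. One harmless slip: surjectivity of $\psi\circ\phi$ forces $\psi$ (not $\phi$) to be onto, and surjectivity of $\phi\circ\psi$ forces $\phi$ to be onto --- since you invoke both compositions, the conclusion still stands.
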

	
	\begin{proof}
	Let $\phi_{n}$ and $\psi_{n}$ be continuous maps for which the term 
	\[ \frac{1}{2}D(\phi_n,\psi_n), \norm{f - g\circ \phi_n}_{\infty}, \norm{f \circ \psi_n - g}_{\infty} \]
	is less than $1/n$, i.e. approximate matchings between our Reeb graphs whose \emph{distortion} approaches the infinimum, which in this case is zero. The fact that the term $\frac{1}{2}D(\phi_n,\psi_n)$ goes to zero means that the Gromov-Hausdorff distance between $R_f$ and $R_g$ goes to zero, and hence, by Theorem 7.3.30 in \cite{burago2001course}, $R_f$ and $R_g$ are isometric. In fact, the proof of that theorem demonstrates that some subsequence of these maps $\phi_n$ and $\psi_n$ converges to a pair of isometries $\phi,\psi$. The requirements that $\norm{f - g\circ \phi_n}_{\infty} \to 0$ and $\norm{f \circ \psi_n - g}_{\infty} \to 0$ ensure that these isometries preserve the height functions.
\end{proof}

The height function $f$ on a Reeb graph $R_f$ gives rise to sublevel and superlevel set filtrations, from which we can compute an extended persistence diagram. One can then compare the FD distance between two Reeb graphs and the bottleneck distances between their 1-dimensional extended persistence barcodes. If our Reeb graphs are of \emph{Morse type}, then we have the following result from \cite{localequiv}

	\begin{theorem}[\cite{localequiv}]
	\label{ReebInequality}
	Let $R_f$ and $R_g$ be two Reeb graphs, with critical values $\{a_1, \cdots , a_n\}$ and $\{b_1, \cdots, b_m\}$. Let $a_{f} = \min_{i} \{a_{i+1} - a_{i}\}$, and $a_{g} = \min_{i} \{b_{i+1} - b_{i}\}$. Let $K \in (0,1/22]$. If $d_{FD}(R_f,R_g) \leq \max \{a_f,a_g\}/(8(1+22K))$, then
	\[Kd_{FD}(R_f,R_g) \leq d_{B}(R_f,R_g) \leq 2d_{FD}(R_f,R_g)\]
	
	where $d_{B}(R_f,R_g)$ is the bottleneck distance between their respective extended persistence barcodes. Note that the upper bound on the bottleneck distance holds more generally for any pair of Reeb graphs; the qualifications are necessary only for the lower bound.
\end{theorem}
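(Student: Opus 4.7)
The plan is to prove the two inequalities separately, with the upper bound being the standard stability direction and the lower bound being the technical heart of the statement.

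For the upper bound $d_{B}(R_f,R_g) \leq 2d_{FD}(R_f,R_g)$, I would argue by an interleaving construction that needs no smallness hypothesis. Given continuous maps $\phi: R_f \to R_g$ and $\psi: R_g \to R_f$ nearly realizing the FD distance, let $\delta$ be the maximum of $\norm{f-g\circ\phi}_{\infty}$, $\norm{f\circ\psi-g}_{\infty}$, and $\tfrac{1}{2}D(\phi,\psi)$. The two sup-norm bounds together with the distance-distortion bound can be assembled into a $2\delta$-interleaving of the $1$-dimensional extended persistence modules of $f$ and $g$, the factor of $2$ arising from the $\tfrac{1}{2}$ in front of $D(\phi,\psi)$. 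Algebraic stability, essentially Theorem~\ref{perstable} applied through the pullbacks $g\circ\phi$ and $f\circ\psi$, then yields $d_B \leq 2\delta$, and taking an infimum over $(\phi,\psi)$ gives the upper bound.

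For the lower bound $Kd_{FD}(R_f,R_g) \leq d_{B}(R_f,R_g)$ the smallness hypothesis is essential, and the argument is considerably more delicate. Fix a bottleneck-optimal matching $\mathcal{M}$ between $\Dg(f)$ and $\Dg(g)$ of cost $\delta := d_B(R_f,R_g)$, and aim to build $\phi,\psi$ whose three FD-components are each at most $\delta/K$. The hypothesis $d_{FD} \leq \max\{a_f,a_g\}/(8(1+22K))$, which since $d_{FD}$ is what we are trying to bound must be bootstrapped from a crude a priori estimate, ensures that $\delta$ is much smaller than the gap between consecutive critical values of (say) $f$. Under this separation, $\mathcal{M}$ lifts to a canonical correspondence between the topological features (branches, cycles, essential classes) of $R_f$ and $R_g$: each persistence point has a unique counterpart and there is no ambiguity as to which Reeb-graph feature it represents.

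Given this feature correspondence, I would build $\phi$ and $\psi$ by slicing each Reeb graph at its critical levels, matching the elementary slabs between consecutive critical levels in accordance with the feature correspondence, and gluing along the critical cross-sections. The function-matching terms in the FD distance are then controlled directly by $\delta$, since matched birth and death coordinates differ by at most $\delta$. The distortion term $D(\phi,\psi)$ is the subtle one: two points close in $d_f$ are joined by a path of small $f$-oscillation, and the constructed map must send this path to a path in $R_g$ whose $g$-oscillation is comparable. The factor $1/K$, with the constant $\const$ arising from the combinatorial bookkeeping, absorbs the cumulative stretch introduced when gluing slab maps together along a path that may traverse many slabs. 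The primary obstacle is precisely this last step: a bottleneck matching of diagrams records only birth and death coordinates and is oblivious to the global connectivity of the Reeb graph, so reconstructing globally Lipschitz maps $\phi,\psi$ requires that the feature correspondence translate unambiguously into a matching of neighborhoods of critical points, and that merges and splits be routed consistently across levels. This consistency is exactly what the critical-value separation hypothesis guarantees, and the resulting case analysis is the technical core of~\cite{localequiv}.
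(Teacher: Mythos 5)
This theorem is not proved in the paper at all: it is imported verbatim from \cite{localequiv} and used as a black box (in Section~\ref{appbtlocalinj} with $K=1/22$). So there is no in-paper proof to compare yours against; what you have written has to stand on its own, and as it stands it is a roadmap rather than a proof. The upper-bound half is essentially right in outline --- one does build an interleaving of the extended persistence modules from the maps $\phi,\psi$, using the two sup-norm terms to shift sublevel/superlevel sets and the distortion term to control $\psi\circ\phi$ and $\phi\circ\psi$ --- though your gloss that this is ``essentially Theorem~\ref{perstable} applied through the pullbacks'' elides the real issue, namely that $\Dg(g\circ\phi)$ and $\Dg(g)$ are diagrams of functions on \emph{different} spaces and $\phi$ need not induce isomorphisms on homology; the interleaving must use both maps jointly, which you do gesture at.

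The genuine gap is the lower bound, which you correctly identify as the technical heart and then do not prove: the construction of $\phi,\psi$ from a bottleneck matching, the consistency of the routing across critical levels, and the provenance of the constant $22$ are all asserted and explicitly deferred to \cite{localequiv}. Two specific points need repair even at the level of the sketch. First, there is no ``bootstrapping'' issue: the hypothesis $d_{FD}(R_f,R_g)\leq \max\{a_f,a_g\}/(8(1+22K))$ is an assumption of the theorem, not something to be derived; combined with the (unconditional) upper bound it immediately makes $d_B$ small relative to the critical gaps, which is the separation your slab construction needs. Second, your claim that under this separation ``each persistence point has a unique counterpart and there is no ambiguity as to which Reeb-graph feature it represents'' is false in general --- distinct features can share birth--death coordinates, and disambiguating them is precisely part of the case analysis in \cite{localequiv} that produces the unwieldy constant. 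If the intent is to cite the result, do as the paper does and cite it; if the intent is to prove it, the lower-bound construction must actually be carried out.
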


Incorporated in this theorem are the two facts that taking extended persistence barcodes is stable and locally injective (and in fact locally bilipschitz). In this context, local injectivity means that a fixed Reeb graph $R_f$ has a distinct barcode from any nearby Reeb graphs; however, there may be a pair of Reeb graphs near $R_f$ with the same barcode. Put another way, the operation of assigning a barcode to a Reeb graph is not fully injective on the small open ball described in the theorem -- comparisons are only allowed with the fixed reference Reeb graph at the center of the ball. In fact, generically speaking, there exist arbitrarily close pairs of Reeb graphs with the same persistence diagram.

\subsubsection{Dictionary}
	\label{sec:reebdictionary}
An explicit dictionary between geometric features in a Reeb graph and the points in its extended persistence diagram is given in \cite{reebgraphdistance}. We recall it here.\\

 A \emph{downfork} is a point $v \in X$ together with a pair of adjacent directions along which $f$ is locally decreasing (if $v$ is a leaf vertex one direction suffices), and an \emph{upfork} is a point $u \in X$ together with a pair of adjacent directions along which $f$ is locally increasing (if $v$ is a leaf vertex, one direction suffices here too). We distinguish between two types of downforks: \emph{ordinary} downforks where the two adjacent directions sit in different connected components of the sublevelset $f^{-1}((-\infty, f(v)])$, and \emph{essential} downforks where they are in the same connected component. There is a similar dichotomy for upforks, giving four types of forks in total.\\
 
 We now identify the eight kinds of endpoints appearing in the extended persistence diagram of a Reeb graph. In $H_{0}$ there are only two kinds of points: ordinary and extended, as new connected components do not appear in the relative part of the filtration. In $H_{1}$ there are only relative and extended points, as the lack of $2$-dimensional faces in our Reeb graph means that no cycle dies in ordinary homology. Each such point has two endpoints, a birth and a death time, giving a total of eight types of endpoints.\\ 

Thus, we have the dictionary prescribed in Table \ref{tab:dict}. Note that this dictionary does not promise a canonical or unique assignment of a downfork or upfork to an endpoint in the persistence diagram, nor vice versa. Rather, it is meant to assert that the existence of certain endpoints in the persistence diagram guarantees the existence of certain downforks or upforks. See Figure \ref{fig:persexample} for an illustration of this pairing for a particular graph.  Readers can find more details in \cite{reebgraphdistance}.

\begin{center}
	\begin{table}
\begin{tabular}{ | c || c |}
	\hline
	\multicolumn{2}{|c|}{Dictionary for Extended Persistence of Reeb Graphs} \\
	\hline
	Persistence Diagram    & Reeb Graph\\
	\hline
 $(f(v),\cdot)$ in ordinary $H_0$ persistence. & upfork $v$ (ordinary or essential).\\
 $(\cdot,f(v))$ in ordinary $H_0$ persistence. & ordinary downfork $v$.\\
 $(f(v),\cdot)$ in extended $H_0$ persistence. & upfork $v$ (ordinary or essential).\\
 $(\cdot,f(v))$ in extended $H_0$ persistence. & downfork $v$ (ordinary or essential).\\
 $(f(v), \cdot)$ in relative $H_1$ persistence. &   downfork $v$ (ordinary or essential).\\
$(\cdot,f(v))$ in relative $H_1$ persistence. &  ordinary upfork $v$.\\
$(f(v),\cdot)$ in extended $H_1$ persistence. &  essential downfork $v$.\\
$(\cdot,f(v))$ in extended $H_1$ persistence. & essential upfork $v$.\\
	\hline

\end{tabular}
\caption{A dictionary between endpoints in the extended persistence diagram of a Reeb graph and the various kinds of upforks and downforks it contains.}
\label{tab:dict}
\end{table}
\end{center}

 \begin{figure}[htb]
 	\includegraphics[scale=0.9]{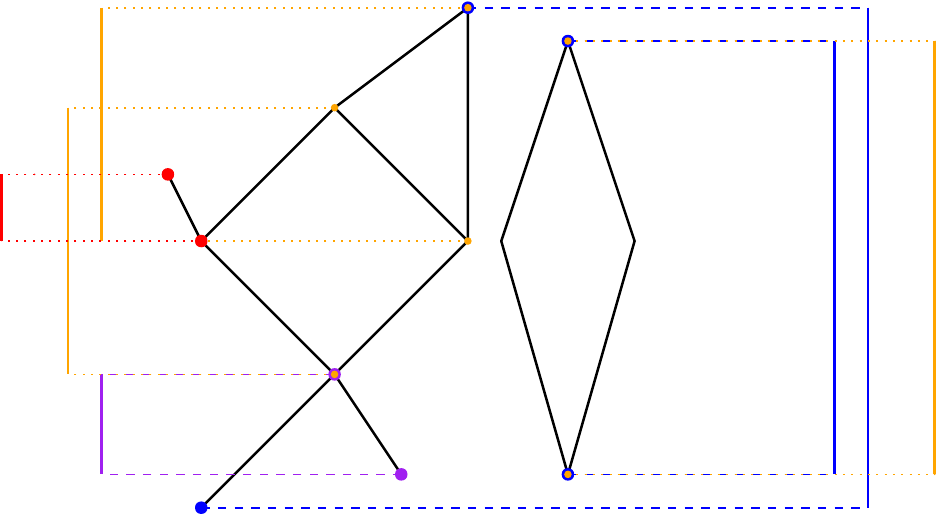}
 	\caption{Consider the above Reeb graph, whose associated function is simply the projection on to the vertical axis. This Reeb graph is disconnected, having two connected components. In $H_0$, ordinary diagram points (shown here as intervals) are in purple and extended points are in blue. In $H_1$, relative points are in red and extended points are in orange. In $H_0$, there is one ordinary point, born at an ordinary upfork and dying at an ordinary downfork. There are two extended points in $H_0$, one born at an ordinary upfork and the other at an essential upfork, and both dying at essential downforks. In $H_1$, there is one relative point, born at an ordinary downfork and dying at an ordinary upfork. There are three extended points in $H_1$, all born at essential downforks and dying at essential upforks. Note that, for both relative and extended points in $H_1$, the death occurs at a lower value than the birth.}
 	\label{fig:persexample}
 \end{figure}

\begin{lemma}
\label{lem:upforkvertex}
Given a connected metric graph $G$ and a point $p \in G$, consider the Reeb graph $R_f$ associated to the function $f(\cdot) = d_{G}(p,\cdot)$. Any upfork in $R_f$ distinct from $p$ is necessarily a vertex of valence at least three. Taken together with the prior dictionary, this implies that the set of nonzero death times in the one-dimensional persistence diagram of $R_f$ is a subset of the set of distances from $p$ to vertices of valence at least three. 
\end{lemma}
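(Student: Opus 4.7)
The plan is to reduce the claim to a statement about $G$ itself and then apply a shortest-path argument. The central observation is that for any point $q \neq p$ in $G$, the function $f_p$ must strictly decrease along at least one adjacent edge direction, which is incompatible with the upfork condition unless $q$ has valence at least three.

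First, I would identify upforks in $R_f$ with upforks of $f_p$ on $G$ itself. Since $f_p$ is piecewise linear with slopes $\pm 1$ on each edge, its level sets are finite discrete subsets of $G$, so the Reeb equivalence $\sim_f$ identifies no distinct points. The quotient map $G \to R_f$ is then a continuous bijection from a compact space to a Hausdorff space, hence a homeomorphism preserving the function; in particular, the notions of vertex and valence transfer unambiguously to $R_f$.

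Next, given an upfork $q \neq p$, I would select a shortest path $\gamma : [0,L] \to G$ from $p$ to $q$, which exists by compactness and connectedness of $G$ and has length $L = d_G(p,q) > 0$. This path enters $q$ along a single adjacent edge direction $\vec{d}$, and the identity $f_p(\gamma(t)) = t$ for $t$ close to $L$ shows that $f_p$ strictly decreases in direction $\vec{d}$. This rules out the leaf case of the upfork condition (where $\vec{d}$ would be the only direction) and the valence-two case (where both directions must be increasing), so $q$ must be a vertex of valence at least three.

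Finally, the assertion about death times follows from the dictionary in Section~\ref{sec:reebdictionary}. Because $G$ is a connected geodesic space, sublevel sets of $f_p$ are path-connected metric balls, so every downfork of $f_p$ is essential and contributes only a birth coordinate in extended $H_1$; in particular no ordinary $H_0$ off-diagonal death occurs. Every nonzero death time therefore originates from an upfork $w$ with $f_p(w) > 0$, i.e.\ $w \neq p$, and by the first part $w$ must be a vertex of valence at least three, with the death time equal to $f_p(w) = d_G(p,w)$. The only mildly delicate step is the homeomorphism $G \cong R_f$; once that is in place, everything reduces to the one-line observation that a shortest path from $p$ to $q$ must enter $q$ along a decreasing direction.
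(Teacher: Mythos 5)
Your core argument is the same as the paper's: a geodesic from the candidate upfork $q$ back to $p$ forces $f_p$ to decrease along at least one adjacent direction, leaving at most one increasing direction, so $q$ can satisfy the upfork condition only if it has valence at least three. The two additions you make are both correct and actually fill in steps the paper leaves implicit: the observation that level sets of $f_p$ are finite (slopes $\pm 1$ on every edge), so that $G \to R_f$ is a homeomorphism and ``upfork in $R_f$'' can be read off in $G$; and the observation that sublevel sets of a distance function on a geodesic space are connected, so all downforks are essential and contribute no ordinary $H_0$ deaths. One caveat on the latter: your assertion that \emph{every} nonzero death time originates from an upfork overlooks the extended $H_0$ point $(0,D)$, whose death time $D$ is the radius of $G$ at $p$ and is realized at the global maximum of $f_p$, which need not be a vertex of valence at least three (e.g.\ a leaf or the antipode on a cycle). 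This imprecision is really in the statement of Lemma~\ref{lem:upforkvertex} itself rather than in your argument---the paper elsewhere (Remark~\ref{remark:geomremark}) treats the zero-dimensional point $(0,D)$ as a separate piece of data, and uses the lemma only for death times coming from upforks---but if you want your proof of the second sentence to be airtight you should either restrict to the one-dimensional diagram or explicitly set aside the $H_0$ extended class.
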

\begin{proof}
Let $x \neq p$ be a point in $G$ which is not a vertex of valence at least three. Then there are at most two adjacent directions to $x$ in $G$. It is impossible for such a point to be an upfork, as at least one of the directions adjacent to it is the initial segment of a geodesic from $x$ to $p$. If $x$ is a leaf, there is only one such adjacent direction, and so $x$ is a downfork. If $x$ has valence two, then there is at most one direction along which the distance to $p$ (and hence the value of $f$) can be locally increasing, and again $x$ cannot be an upfork.  
\end{proof}

  \begin{remark}
  
    Consider again the setting of Lemma \ref{lem:upforkvertex}, in which the Reeb graph is constructed from a connected metric graph $G$ using a function of the form $d_{G}(p,\cdot)$. As the sublevel sets of $d_{G}(p,\cdot)$ are always connected (the distances in $G$ can be realized by geodesics), there is only one point in $H_0$ persistence; this point appears in extended persistence, and is born at time $0 = d_{G}(p,p)$, corresponding to the upfork $p$. Note also that although nonzero one-dimensional death times correspond to distances to vertices of valence at least three, a birth time need not correspond to the distance to any vertex. Consider for example a graph $G$ with three vertices $v_1,v_2,v_3$ at pairwise distance $1$, forming an equilateral triangle. The only point in the one-dimensional persistence diagram $\Psi_{G}(v_1)$ corresponds to the loop generated by the full graph, and is born at distance $1.5$, the radius of $G$ at $v_1$. In the Reeb graph $\Phi_{G}(p)$, this is the distance from $p$ to the downfork point halfway between $v_2$ and $v_3$. 
    \end{remark}

\section{The Barcode Embedding and the Persistence Distortion Pseudometric}
\label{constructionsection}

The goal of this section is to review and expound upon the relevant constructions and results in \cite{dey2015comparing} from a metric geometry perspective, incorporating some additional results for metric graphs and metric measure graphs.\\  

The following diagram introduces the spaces and maps involved in our constructions; $G$ is any fixed metric graph.
\begin{center}
\begin{tikzcd}
{\bf PointedMGraphs} \arrow{r}{\Phi} \arrow[bend left]{rr}{\Psi} & {\bf Reeb} \arrow{r}{\operatorname{ExDg}} & {\bf Barcodes}\\
G \arrow{ur}{\Phi_G} \arrow[bend right]{rru}{\Psi_G} & &
\end{tikzcd}
\end{center}
\vspace{10 mm}

	\begin{definition}
	Let $(G,x)$ be a pointed metric graph. We will write $\Phi(G,x)$ to denote the Reeb graph given by the function $f_{x} : G \to \mathbb{R}$ defined by $f_{x}(y) = d_{G}(x,y)$. If we fix a metric graph $G$ then we can view $\Phi$ as a function solely of this basepoint, writing $\Phi_{G}(x) = \Phi(G,x)$.
	\end{definition}
	
The maps $\Phi$ and $\Phi_G$ are identical in nature but differ in their domain: the former is defined on the entire space of pointed graphs, whereas the latter is defined on a fixed graph $G$. Now, given a Reeb graph we can compute its extended persistence diagram, and we denote this operation by $\operatorname{ExDg}$. Composing $\Phi$ and $\Phi_G$ with $\operatorname{ExDg}$ produces our second pair of maps.

	\begin{definition}
	Let $(G,x)$ be a pointed metric graph. Define $\Psi(G,x) = \operatorname{ExDg} \circ \Phi(G,x)$ to be the extended persistence diagram of the Reeb graph associated to $(G,x)$. Similarly, for a fixed metric graph $G$, we define $\Psi_{G}(x) = \operatorname{ExDg} \circ \Phi_{G}(x)$.
	\end{definition}

 The following results demonstrate that $\Phi_{G}$ and $\Psi_{G}$ are Lipschitz and that similar graphs produce similar Reeb graphs and barcodes.

\begin{lemma}
    \label{psilip}
    Fix a metric graph $G$. Let $p,q \in G$ be any two basepoints. Then $d_{FD}(\Phi_{G}(p),\Phi_{G}(q)) \leq d_{G}(p,q)$ and $d_{B}(\Psi_{G}(p),\Psi_{G}(q)) \leq d_{G}(p,q)$.
\end{lemma}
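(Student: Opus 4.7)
The key observation underlying both inequalities is that $\|f_p - f_q\|_\infty \leq d_G(p,q)$, where $f_p, f_q : G \to \mathbb{R}$ are the distance functions from the basepoints $p, q$. This follows immediately from the reverse triangle inequality: for every $y \in G$,
\[|f_p(y) - f_q(y)| = |d_G(p,y) - d_G(q,y)| \leq d_G(p,q).\]
So the entire problem reduces to showing that a small $\|\cdot\|_\infty$-perturbation of the function inducing a Reeb graph produces small perturbations in both $d_{FD}$ and $d_B$.

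For the bottleneck bound, I would simply invoke Theorem \ref{perstable}. Since $G$ is a compact 1-dimensional CW complex, it is (after at most one subdivision of self-loops and multi-edges, as noted in the discussion following the theorem) homeomorphic to the realization of a simplicial complex. The functions $f_p$ and $f_q$ are both continuous, and $\Psi_G(p), \Psi_G(q)$ are, by definition, the extended persistence barcodes of their sublevel/superlevel filtrations. The stability theorem then yields
\[d_B(\Psi_G(p),\Psi_G(q)) \leq \|f_p - f_q\|_\infty \leq d_G(p,q).\]

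For the functional distortion bound, the plan is to exhibit continuous maps $\phi : \Phi_G(p) \to \Phi_G(q)$ and $\psi : \Phi_G(q) \to \Phi_G(p)$ that together witness a cost of at most $\|f_p - f_q\|_\infty$. Writing $\pi_p : G \to R_{f_p}$ and $\pi_q : G \to R_{f_q}$ for the quotient maps, the ideal choice would satisfy $\phi \circ \pi_p = \pi_q$, but this is not well-defined since $\pi_p$-equivalent points in $G$ need not be $\pi_q$-equivalent. The fix is to choose a common triangulation of $G$ refined enough that both $f_p$ and $f_q$ are piecewise linear, define $\phi$ and $\psi$ on the resulting finite vertex sets of $R_{f_p}$ and $R_{f_q}$ by picking preimages in $G$ and pushing them through the opposite quotient map, then extend linearly along edges. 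The two function-preservation bounds in Definition \ref{FDdef} are then immediate from the vertex-wise construction and convex-combination arguments, and the distortion term $D(\phi,\psi)$ is bounded by $2\|f_p - f_q\|_\infty$ by lifting minimizing paths in one Reeb graph to paths in $G$ and pushing them down to the other, tracking the change in the range of the height function.

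The main obstacle, and the part that requires the most care, is verifying the distortion bound $D(\phi,\psi) \leq 2\|f_p-f_q\|_\infty$ rigorously: this requires understanding how the Reeb metric $d_f$ (defined as an infimum over paths in the Reeb graph) transforms under a perturbation of the underlying function, via path lifting to $G$. This is essentially the Reeb-graph stability statement for the FD distance established in the Bauer--Ge--Wang line of work cited in the excerpt, specialized to the case where the two Reeb graphs arise from functions on a common domain.
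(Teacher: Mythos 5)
Your reduction to $\lVert f_p - f_q\rVert_\infty \leq d_G(p,q)$ and your treatment of the bottleneck half via Theorem \ref{perstable} coincide exactly with the paper. For the functional distortion half, however, you take a detour around an obstruction that does not actually arise here. You worry that the identity on $G$ fails to descend to a map $R_{f_p}\to R_{f_q}$ because $\pi_p$-equivalent points need not be $\pi_q$-equivalent; but for a distance function $d_G(p,\cdot)$ on a compact metric graph, every level set is a \emph{finite} set of points (on each edge the function is piecewise linear with slopes $\pm 1$, hence never constant on a subinterval), so the path components of each level set are singletons and the Reeb equivalence relation is trivial. The quotient maps $\pi_p,\pi_q$ are therefore continuous bijections from a compact space to a Hausdorff space, i.e.\ homeomorphisms, and the paper simply takes $\phi = \pi_q\circ\pi_p^{-1}$ and $\psi = \pi_p\circ\pi_q^{-1}$ --- ``the identity map'' --- in Definition \ref{FDdef}. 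The two function-preservation terms are then literally $\lVert f_p-f_q\rVert_\infty$, and the distortion term satisfies $D(\phi,\psi)\leq 2\lVert f_p-f_q\rVert_\infty$ because for any path $\pi$ the range of $f_p\circ\pi$ and the range of $f_q\circ\pi$ differ in length by at most $2\lVert f_p-f_q\rVert_\infty$, and one takes the minimum over the same set of paths. Your triangulation-and-interpolation construction, together with the appeal to the Bauer--Ge--Wang stability theorem for the distortion bound, would eventually yield the same estimate and is the right tool in the general setting of two arbitrary Morse-type functions on a common domain; but here it replaces a two-line verification with machinery whose hardest step you explicitly leave to citation. If you notice that the Reeb quotient is trivial in this setting, the ``main obstacle'' you identify disappears entirely.
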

\begin{proof}
    The $L^{\infty}$ distance between the two distance functions $d(p, \cdot)$ and $d(q, \cdot)$ is bounded by $d_{G}(p,q)$ by the triangle inequality. Thus the first claim can be seen by considering by setting $\phi$ and $\psi$ equal to the identity map in the definition of the functional distortion distance, and the second follows from Theorem \ref{perstable}.
\end{proof}

\begin{theorem}[\cite{dey2015comparing}, \S 3]
\label{pairedgraphs}
    Let $G,G'$ be a pair of metric graphs, and let $\mathcal{M}$ be a correspondence between them realizing the Gromov-Hausdorff distance $\delta = d_{GH}(G,G')$. If $p \in G$ and $p' \in G'$ are a pair of points with $(p,p') \in \mathcal{M}$ then the two Reeb graphs $\Phi_{G}(p)$ and $\Phi_{G'}(p')$ are within $6\delta$ of each other in the functional distortion distance, and the resulting barcodes $\Psi_{G}(p)$ and $\Psi_{G'}(p')$ are within $18\delta$ of each other in the bottleneck distance.
\end{theorem}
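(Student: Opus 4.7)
The plan is to first bound the functional distortion distance between $R_f := \Phi_G(p)$ and $R_g := \Phi_{G'}(p')$ by $6\delta$, then deduce the $18\delta$ barcode bound using the Reeb-graph stability machinery. Write $f(\cdot) = d_G(p,\cdot)$ and $g(\cdot) = d_{G'}(p',\cdot)$. The starting observation, applying $\mathrm{cost}(\mathcal{M}) = \delta$ to the pair $(p,p') \in \mathcal{M}$, is that $|f(x) - g(x')| \leq \delta$ for every $(x,x') \in \mathcal{M}$; in particular, the two height functions are $\delta$-close through the correspondence.

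Next, I would construct continuous maps $\phi: R_f \to R_g$ and $\psi: R_g \to R_f$ that almost preserve both the height functions and the intrinsic Reeb metrics. A pointwise definition via set-valued sections of $\mathcal{M}$ is not continuous, so I would instead exploit the fact that $R_f$ is a finite topological graph: fix a CW structure, define $\phi$ on each vertex of $R_f$ by choosing a representative in $G$, pushing through $\mathcal{M}$, and projecting to $R_g$, then extend $\phi$ continuously along each edge using a near-monotone path in $R_g$ between the chosen vertex images (this is possible because each edge of $R_f$ is homeomorphic to an $f$-interval). The map $\psi$ is built symmetrically.

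With these maps in hand, I would estimate the three terms of $d_{FD}$. The two height-function discrepancies $\|f - g \circ \phi\|_\infty$ and $\|f \circ \psi - g\|_\infty$ are controlled by the $\delta$-closeness observation together with the height variation of the edge-extensions. The third term, $\tfrac{1}{2} D(\phi,\psi)$, uses the fact that the Reeb metrics $d_f, d_g$ are defined via the $f$- and $g$-range of connecting paths: transporting a path in $R_f$ to $R_g$ through $\phi$ perturbs its height range by at most $2\delta$, yielding a $2\delta$ distortion of intrinsic distances. Combining the contributions gives $d_{FD}(R_f, R_g) \leq 6\delta$, and then $d_B(\Psi_G(p), \Psi_{G'}(p')) \leq 18\delta$ follows from the upper half of Theorem \ref{ReebInequality} (which gives the unconditional $d_B \leq 2 d_{FD}$, in fact yielding the tighter $12\delta$; the looser $18\delta$ statement matches the direct bottleneck-stability argument of Dey et al.).

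The hardest part is the continuous-map construction: the correspondence $\mathcal{M}$ does not canonically descend to continuous maps between the Reeb graphs, and the edge-extensions must be chosen so as to keep both the height-function and the metric distortions within $O(\delta)$ simultaneously. A clean way to organize this is to subdivide both Reeb graphs so that every edge is short in the Reeb metric, which guarantees the existence of near-monotone interpolating paths in the target and uniform control on the resulting distortion.
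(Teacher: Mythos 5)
First, a caveat: the paper does not prove this statement at all --- Theorem \ref{pairedgraphs} is imported verbatim from \cite{dey2015comparing} (\S 3), so there is no internal proof to compare against. Your overall architecture (pass from the correspondence $\mathcal{M}$ to a $6\delta$ bound on $d_{FD}$, then invoke bottleneck stability of extended persistence with respect to $d_{FD}$) is indeed the strategy of the cited source, and your opening observation that $|f(x)-g(x')|\leq \mathrm{cost}(\mathcal{M})=\delta$ for all $(x,x')\in\mathcal{M}$ is correct under the paper's convention for $d_{GH}$ (Definition \ref{ghdef}, which omits the usual factor of $1/2$). The final step is also fine: the unconditional upper bound $d_B\leq 2\,d_{FD}$ from Theorem \ref{ReebInequality} would give $12\delta\leq 18\delta$, so deriving the stated constant is not the issue.

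The genuine gap is that the central construction is asserted rather than carried out, and it is precisely there that the constant $6$ lives. A correspondence is set-valued and does not descend to continuous maps $R_f\to R_g$; your fix (choose vertex images, interpolate along edges by ``near-monotone'' paths) leaves two things unestablished. First, existence and control of the interpolating paths: two adjacent vertex images in $R_g$ can differ in height by up to $\delta$ plus the edge length no matter how finely you subdivide, and you must exhibit a connecting path whose $g$-range is controlled by that amount --- this requires an argument, not just the observation that each edge of $R_f$ is an $f$-interval. Second, and more importantly, your estimate of $D(\phi,\psi)$ only addresses one of the two inequalities: pushing a path $\gamma$ forward gives $d_g(\phi(x),\phi(x'))\leq d_f(x,x')+2\|f-g\circ\phi\|_\infty$, but $D(\phi,\psi)$ is a supremum over all pairs in $C(\phi,\psi)$, including the mixed pairs $(x,\phi(x))$ and $(\psi(y),y)$, and lower-bounding $d_g(\phi(x),\phi(x'))$ in terms of $d_f(x,x')$ requires composing with $\psi$ and controlling $d_f(x,\psi(\phi(x)))$. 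It is exactly in this back-and-forth that the losses accumulate to produce $6\delta$ rather than $2\delta$ or $4\delta$, and your proposal contains no bookkeeping that would certify the total is $6\delta$. As written, the proposal is a correct plan with the hardest step --- the one the cited paper actually spends its effort on --- still open.
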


The next step in our construction is to consider the collection of Reeb graphs produced by varying the basepoint along a fixed metric graph. The set of extended persistence barcodes for these Reeb graphs is our intended object of study. For a topological space $X$, the notation $C(X)$ refers to the set of compact subsets of $X$.

\begin{definition}
	For a fixed metric graph $G$, define the \emph{Reeb transform} of $G$ to be the collection of Reeb graphs $\mathcal{RT}(G) = \{\Phi_{G}(x) \mid x \in G \}$. The resulting set of barcodes $\mathcal{BT}(G) = \{\Psi_{G}(x) \mid x \in G\}$ will be called the \emph{barcode transform} of $G$.
	
	\begin{center}
		\begin{tikzcd}
		\mathbf{MGraphs} \arrow{r}{\mathcal{RT}} \arrow[bend right]{rr}{\mathcal{BT}} & C(\mathbf{Reeb}) \arrow{r}{\operatorname{ExDg}} & C(\mathbf{Barcodes})
		\end{tikzcd}
	\end{center}
\end{definition}

The following lemma demonstrates that the barcode transform of a metric graph has a natural metric structure.

\begin{lemma}
    \label{bottleneckmetric}
    Let $G$ be a metric graph. The bottleneck distance $d_{B}$ restricted to $\mathcal{BT}(G)$ is always a true metric.
\end{lemma}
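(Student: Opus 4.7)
The plan is to reduce the claim to the statement that no diagram $\Psi_G(p)$ contains any point on the diagonal $\Delta$. Since $d_B$ is already a pseudometric, the only obstruction to its being a true metric is that two diagrams might differ solely by points on $\Delta$, which carry zero matching cost. Once one rules out diagonal points, the condition $d_B(\Psi_G(p), \Psi_G(q)) = 0$ forces a cost-zero bijection between the off-diagonal points of the two diagrams, yielding equality as multisets.

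The argument then splits along the four classes $\Ord$, $\Rel$, $\Ext^+$, $\Ext^-$ of persistence points. Ordinary and relative points are strictly above and below $\Delta$ respectively by the classification recalled in Section~\ref{background}, so no work is needed there. For the extended $H_0$ contribution (which sits in $\Ext^+$), connectedness of $G$ produces a single bar running from $0$ (attained at $p$) to the eccentricity $e(p) = \max_q d_G(p,q)$, which is strictly positive as long as $G$ has more than one point; the singleton case makes $\mathcal{BT}(G)$ a singleton and the metric property trivial.

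The crux is ruling out $\Ext^-$ points on $\Delta$. I would invoke the dictionary of Section~\ref{sec:reebdictionary}: each extended $H_1$ point takes the form $(f_p(v), f_p(w))$, where $v$ is an essential downfork of $f_p = d_G(p,\cdot)$ and $w$ is an essential upfork paired with $v$, realized as the minimizer of $f_p$ along an optimal path $\gamma$ in the sublevel set $f_p^{-1}((-\infty, f_p(v)])$ joining the two downfork directions at $v$. Since those directions are by definition strictly $f_p$-decreasing, both endpoints of any such $\gamma$ already lie at $f_p$-value strictly below $f_p(v)$, so $f_p(w) = \min_\gamma f_p < f_p(v)$ and the point lands strictly below $\Delta$.

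The only delicate point is this strict inequality, which is easy to confuse with the weaker $f_p(w) \leq f_p(v)$ that follows automatically from the sublevel-set constraint. The strict version depends on $f_p$ genuinely dropping as one steps off $v$ in a downfork direction; this is where one uses that $f_p = d_G(p,\cdot)$ is piecewise linear of unit slope on every edge and therefore non-constant in any direction at a downfork. I view this as the main obstacle to watch, but once it is flagged, the four-way case analysis closes quickly.
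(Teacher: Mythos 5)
Your proof is correct and follows the same route as the paper's: isolate the one-point graph as the trivial case, then use the dictionary of Section~\ref{sec:reebdictionary} to show no diagram $\Psi_G(p)$ has points on the diagonal, whence $d_B$ is a genuine metric. The paper simply asserts the no-diagonal-points claim from the dictionary, whereas you verify it class by class (including the strict inequality $f_p(w) < f_p(v)$ for extended $H_1$ points); this is a more detailed rendering of the same argument, not a different one.
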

\begin{proof}
    We distinguish two cases: either $G$ is a graph consisting of a single point or it is not. If $G$ is a single point, $\mathcal{BT}(G)$ is a single barcode consisting of the point $(0,0)$ on the diagonal, and $d_B$ restricts to give the trivial metric on this space. Otherwise, if $G$ not the one-point graph, the dictionary of section \ref{sec:reebdictionary} implies that none of the Reeb graphs it produces have barcodes with points on the diagonal, and as mentioned in Section \ref{reebbackground} the bottleneck distance is a metric if our barcodes avoid the diagonal. 
\end{proof}

If one starts instead with a metric measure graph $(G,\mu)$, then the maps $\Phi_{G}$ and $\Psi_{G}$ can be used to push forward the measure on $G$ to measures on the spaces $\mathbf{Reeb}$ and $\mathbf{Barcodes}$ respectively. For a measurable space $Y$, the notation $\mathcal{P}(Y)$ refers to the space of Borel probability measures on $Y$.

\begin{definition}
For a fixed metric measure graph $(G,\mu)$, define the pushforward measures $\mathcal{RMT}(G,\mu) = (\Phi_{G})_{\ast}(\mu) \in  \mathcal{P}(\mathbf{Reeb})$ and $\mathcal{BMT}(G,\mu) = (\Psi_{G})_{\ast}(\mu) \in \mathcal{P}(\mathbf{Barcodes})$ as the \emph{Reeb measure transform} and \emph{barcode measure transform} respectively.
\begin{center}
\begin{tikzcd}
\mathbf{MMGraphs} \arrow{r}{\mathcal{RMT}} \arrow[bend left]{rr}{\mathcal{BMT}} & \mathcal{P}(\mathbf{Reeb}) \arrow{r}{\operatorname{ExDg}}  & \mathcal{P}(\mathbf{Barcodes})
\end{tikzcd}
\end{center}
\end{definition}

In the following, our focus will mainly be on the maps $\mathcal{BT}$ and $\mathcal{BMT}$; the maps $\mathcal{RT}$ and $\mathcal{RMT}$ are defined for the sake of completeness but are of little independent interest. Indeed, the following two results demonstrate that a pointed metric graph can be recovered from any of its associated Reeb graphs, and that this inverse map is well-defined on the space of equivalence classes of Reeb graphs. The proofs of these results can be found in Section \ref{reebproofs}.

	\begin{lemma}
	\label{recovergraph}
	Given $(G,x)$, a pointed, connected, and compact metric graph, let $R_{f} = \Phi(G,x)$ be the associated Reeb graph. Define a metric on $R_{f}$ using the infinitesimal length element $|df|$. That is, if $y$ and $z$ are two points on $R_f$, and $\Gamma$ is the set of continuous paths from $y$ to $z$, then
	\[d_{f}(y,z) := \inf_{\gamma \in \Gamma} \int_{\gamma} |df|.\]
	
	Then, $R_{f}$, equipped with the metric $d_{f}$, is isometric to the original metric graph $(G,d_{G})$.
\end{lemma}

	\begin{corollary}
	\label{pointedinj}
	Let $(G,x)$ and $(G',x')$ be two pointed metric graphs. If $\Phi(G,x) \simeq \Phi(G',x')$ then there is an isometry  $f: G \to G'$ with $f(x) =x'$.
\end{corollary} 


The barcode transform and barcode measure transform can be used to define pseudometrics on the space of metric graphs and metric measure graphs respectively, as in \cite{dey2015comparing}.

\begin{definition}
\label{persistencedistortion}
Let $G,H$ be a pair of metric graphs. Define the \emph{persistence distortion} pseudometric $d_{PD}(G,H) = d_{H}(\mathcal{BT}(G),\mathcal{BT}(H))$ to be the Hausdorff distance between their corresponding subsets of Barcode space. Similarly, if $(G,\mu_G)$ and $(H,\mu_H)$ are a pair of metric measure graphs, we define the \emph{measured persistence distortion} pseudometric using the $\infty$-Wasserstein metric, $d_{MPD}((G,\mu_G),(H,\mu_H)) = {d}_{W,\infty}(\mathcal{BMT}(G,\mu_G),\mathcal{BMT}(H,\mu_H))$.
\end{definition}

\section{Stability}

The following stability theorem demonstrates that the persistence distortion can provide a weak lower bound to the intractable Gromov-Hausdorff distance, and is Theorem 3 in \cite{dey2015comparing}, with the constant changed from $6$ to $18$ as per the comment following Theorem 4, since we are using 1-dimensional persistence in addition to 0-dimensional.

\begin{theorem}[\cite{dey2015comparing}]
\label{pdstable}
For a pair of metric graphs $G,H$, $d_{PD}(G,H) \leq 18 \, d_{GH}(G,H)$.    
\end{theorem}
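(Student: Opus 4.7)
The plan is to reduce the Hausdorff-distance estimate on the barcode side to a pointwise estimate supplied by Theorem \ref{pairedgraphs}. First, I would invoke the fact (recorded after Definition \ref{ghdef}) that for compact metric spaces the Gromov--Hausdorff infimum is attained: there exists a correspondence $\mathcal{M} \subseteq G \times H$ with $\operatorname{cost}(\mathcal{M}) = \delta := d_{GH}(G,H)$. This is the only nontrivial input beyond the pointwise stability statement already at our disposal.

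Next I would unwind the definition of the Hausdorff distance on $\mathbf{Barcodes}$. To bound $d_H(\mathcal{BT}(G), \mathcal{BT}(H))$ by $18\delta$, it suffices to check two symmetric containments: for every $p \in G$, the barcode $\Psi_G(p)$ sits within bottleneck distance $18\delta$ of some element of $\mathcal{BT}(H)$, and vice versa. So fix $p \in G$. Since $\pi_G(\mathcal{M}) = G$, there exists $p' \in H$ with $(p,p') \in \mathcal{M}$, and Theorem \ref{pairedgraphs} applied to this matched pair yields
\[
d_B\bigl(\Psi_G(p), \Psi_H(p')\bigr) \;\leq\; 18\,\delta.
\]
Since $\Psi_H(p') \in \mathcal{BT}(H)$, this places $\Psi_G(p)$ within $18\delta$ of $\mathcal{BT}(H)$. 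The same argument starting from $q \in H$ and using surjectivity of $\pi_H$ produces the reverse containment.

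Taking the supremum over $p$ (and respectively $q$) and then the maximum of the two one-sided Hausdorff distances gives $d_H(\mathcal{BT}(G), \mathcal{BT}(H)) \leq 18\delta$, which is precisely the desired inequality. There is no real obstacle here: the proof is a direct repackaging of Theorem \ref{pairedgraphs} using the realizability of the GH distance by a correspondence. The only subtlety worth flagging explicitly is that each point of $G$ (or $H$) participates in \emph{some} pair in $\mathcal{M}$---this is exactly the surjectivity of the projections built into the definition of a correspondence, and it is what allows the pointwise bound to be promoted to a statement about the full sets $\mathcal{BT}(G)$ and $\mathcal{BT}(H)$.
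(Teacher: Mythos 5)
Your proof is correct and is exactly the intended derivation: the paper itself only cites this statement from \cite{dey2015comparing} without reproving it, but your argument---realize $d_{GH}$ by a correspondence, apply the pointwise bound of Theorem \ref{pairedgraphs} to matched pairs, and use surjectivity of the projections to promote this to the two one-sided Hausdorff bounds---is precisely the strategy the paper follows in its proof of the measured analogue, Theorem \ref{mpdstable}, with couplings in place of correspondences. No gaps.
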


An identical result holds for the measured persistence distortion.

\begin{theorem}
\label{mpdstable}
For a pair of (full-support) metric measure graphs $(G,\mu_G)$ and $(H,\mu_H)$,
\[d_{MPD}((G,\mu_G),(H,\mu_H)) \leq 18 \, \frak{D}_{\infty}((G,\mu_G),(H,\mu_H))\] 
\end{theorem}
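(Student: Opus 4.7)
The plan mirrors the proof of Theorem \ref{pdstable}: I would transport a near-optimal measure coupling $\pi$ witnessing $\mathfrak{D}_\infty$ to a coupling of $\mathcal{BMT}(G, \mu_G)$ and $\mathcal{BMT}(H, \mu_H)$ by pushing it forward along the two barcode-transform maps, and then bound the essential supremum of $d_B$ on the support of the new coupling by a pointwise application of Theorem \ref{pairedgraphs}.

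Given $\epsilon > 0$, choose a coupling $\pi \in \mathcal{M}(\mu_G, \mu_H)$ with $J_\infty(\pi) \leq \mathfrak{D}_\infty((G, \mu_G), (H, \mu_H)) + \epsilon$. By Lemma \ref{psilip}, the maps $\Psi_G$ and $\Psi_H$ are both continuous, so the product map $F \colon G \times H \to \mathbf{Barcodes} \times \mathbf{Barcodes}$ defined by $F(p, q) = (\Psi_G(p), \Psi_H(q))$ is continuous, and $\tilde\pi \colonequals F_* \pi$ is a Borel probability measure on $\mathbf{Barcodes} \times \mathbf{Barcodes}$. A routine marginal computation---using that projection commutes with pushforward---shows that $\tilde\pi$ is a coupling of $\mathcal{BMT}(G, \mu_G)$ and $\mathcal{BMT}(H, \mu_H)$.

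The key observation is that $\supp{\pi}$ is itself a correspondence between $G$ and $H$ (Lemma 2.2 of \cite{memoli2011gromov}). The proof of Theorem \ref{pairedgraphs} depends only on the distortion of the correspondence to which it is applied, not on that correspondence being optimal; therefore, applied to $\supp{\pi}$, it yields the pointwise estimate $d_B(\Psi_G(p), \Psi_H(q)) \leq 18\, J_\infty(\pi)$ for every $(p, q) \in \supp{\pi}$. Since $F$ is continuous and $\supp{\pi}$ is compact, $\supp{\tilde\pi} \subseteq F(\supp{\pi})$, and every pair $(\beta, \beta') \in \supp{\tilde\pi}$ therefore inherits the same bottleneck bound. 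Hence
\[
d_{W,\infty}\bigl(\mathcal{BMT}(G, \mu_G), \mathcal{BMT}(H, \mu_H)\bigr) \;\leq\; \sup_{(\beta, \beta') \in \supp{\tilde\pi}} d_B(\beta, \beta') \;\leq\; 18\, J_\infty(\pi) \;\leq\; 18\,\bigl(\mathfrak{D}_\infty((G, \mu_G), (H, \mu_H)) + \epsilon\bigr),
\]
and sending $\epsilon \to 0$ completes the argument. The main delicate point is verifying that the constant $18$ from Theorem \ref{pairedgraphs} really is pointwise on the support of an arbitrary (not necessarily optimal) correspondence, and that the factor-of-two bookkeeping between $\operatorname{cost}(\mathcal{M})$ in Definition \ref{ghdef} and $J_\infty(\pi)$ in Section \ref{sec:mmg} produces exactly this constant rather than a larger one.
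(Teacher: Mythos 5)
Your proposal is correct and follows essentially the same route as the paper: push the near-optimal coupling forward along $\Psi_G\times\Psi_H$, check the marginals, and transfer the pointwise $18$-bound from $\supp{\pi}$ to $\supp{\tilde\pi}$. The only cosmetic difference is that you obtain $\supp{\tilde\pi}\subseteq F(\supp{\pi})$ directly from compactness of $\supp{\pi}$, whereas the paper proves only that $F(\supp{\pi})$ is dense in $\supp{\tilde\pi}$ (via a general Radon-measure argument) and then passes to the limit with the triangle inequality; both close the same gap.
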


The rest of this section is devoted to the proof of Theorem \ref{mpdstable}. 

\begin{proof}
    The strategy of this proof is to show that a measure coupling between a pair of metric-measure graphs gives rise to a low-distortion correspondence between their supports, which pushes forward to a low-distortion correspondence between subsets of barcode space. We then show that this correspondence between subsets of barcode space is dense in the support of the pushforward of the measure coupling, so that the pushforward measures admit a coupling with support very close to the diagonal.  \\ 

	Let $G,H \in \mathbf{MMGraphs}$ such that $\mathfrak{D}_{\infty}(G,H) = \delta$. For a given $\epsilon > 0$, let $\pi$ be a measure coupling of $\mu_{G}$ and $\mu_{H}$ for which $J_{\infty}(\pi) < \delta + \epsilon$. As we have seen in Section \ref{sec:mmg}, the support of a measure coupling is always a correspondence, and since $\mu_G$ and $\mu_H$ have full measure, we know that $\supp{\pi}$ is a correspondence between $G = \supp{\mu_G}$ and $H = \supp{\mu_H}$. Moreover, since $d_{GH}(G,H) = \frac{1}{2}\inf_{R} \norm{\Gamma_{G,H}}_{L^{\infty}(R \times R)}$, we can deduce that $d_{GH}(G,H) < \delta + \epsilon$. Using the proof of Theorem \ref{pdstable} from \cite{dey2015comparing}, if $(x,y) \in \supp{\pi}$ are pairs of points paired by the correspondence then $d_{B}(\Psi(G,x),\Psi(H,y)) \leq 18(\delta + \epsilon)$. Now consider the pushforward measures $\tilde{\mu}_G = (\Psi_{G})_{\ast}(\mu_G)$, $\tilde{\mu}_H = (\Psi_{H})_{\ast}(\mu_H)$, and $\tilde{\pi} = (\Psi_{G} \times \Psi_{H})_{\ast}(\pi) \in P(\mathbf{Barcodes} \times \mathbf{Barcodes})$.\\
	
	The following claim, which we prove later on in this section, allows us to make use of $\tilde{\pi}$ in estimating $\frak{D}_{\infty}$. 
	
	\begin{claim*}
	$\tilde{\pi}$ is a measure coupling of $\tilde{\mu}_G$ and $\tilde{\mu}_H$.
	\end{claim*}
	
	Next, we claim that the image of the support of $\pi$ is dense in the support of $\tilde{\pi}$. This is a general fact about measures and pushforwards, which we likewise prove later on in this section.
	
	\begin{claim*}
	Let $P$ be a Polish space, and $T$ any topological space, and let $f: P \to T$ be continuous. If $\Pi$ is a probability measure on $P$, then $f(\supp{\pi}))$ is dense in $\supp{f_{\ast}(\pi)}$.
	\end{claim*}
	
	Now take a pair $(b_1,b_2) \in \supp{\tilde{\pi}}$. By density, there is an arbitrarily close pair $(b_1',b_2') \in (\Psi_{G} \times \Psi_{H})(\supp{\pi})$, corresponding to a pair $(x,y) \in \supp{\pi}$ for which $b_1'$ is the barcode associated to the pointed graph $(G,x)$, and similarly for $b_2'$ and $(H,y)$. As we have shown, the distance between the barcodes $b_1'$ and $b_2'$ is at most $18(\delta + \epsilon)$. By the triangle inequality, and taking limits, the same is true of the pair $(b_1,b_2)$. Finally, letting $\epsilon$ go to zero completes the proof.    
\end{proof}

We now prove the two claims used in the proof of Theorem \ref{mpdstable}.

\begin{claim}
	$\tilde{\pi}$ is a measure coupling of $\tilde{\mu}_G$ and $\tilde{\mu}_H$.
\end{claim}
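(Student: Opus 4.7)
The plan is to verify directly the defining property of a coupling: that the two marginal pushforwards of $\tilde{\pi}$ under projection to each factor of $\mathbf{Barcodes}\times\mathbf{Barcodes}$ coincide with $\tilde{\mu}_G$ and $\tilde{\mu}_H$, respectively. Since coupling is a condition about marginals, the proof reduces to a short diagram-chase using functoriality of the pushforward operation.

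First, I would note that in order for the pushforwards $\tilde{\mu}_G=(\Psi_G)_*(\mu_G)$, $\tilde{\mu}_H=(\Psi_H)_*(\mu_H)$, and $\tilde{\pi}=(\Psi_G\times\Psi_H)_*(\pi)$ to be well-defined Borel probability measures, one needs the maps $\Psi_G$ and $\Psi_H$ to be Borel measurable. This is immediate from Lemma \ref{psilip}, which tells us that $\Psi_G\colon G\to \mathbf{Barcodes}$ is $1$-Lipschitz (hence continuous, hence Borel), and similarly for $\Psi_H$; consequently the product map $\Psi_G\times\Psi_H$ is continuous, and the pushforwards make sense.

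Next, let $p_1,p_2\colon \mathbf{Barcodes}\times\mathbf{Barcodes}\to\mathbf{Barcodes}$ denote the two coordinate projections, and similarly $q_1,q_2\colon G\times H\to G,H$ the coordinate projections on the source. The key observation is the commutative identities $p_1\circ(\Psi_G\times\Psi_H)=\Psi_G\circ q_1$ and $p_2\circ(\Psi_G\times\Psi_H)=\Psi_H\circ q_2$. Combined with functoriality of pushforward, $(f\circ g)_*=f_*\circ g_*$, we compute
\[
(p_1)_*\tilde\pi \;=\; (p_1)_*(\Psi_G\times\Psi_H)_*\pi \;=\; (\Psi_G)_*(q_1)_*\pi \;=\; (\Psi_G)_*\mu_G \;=\; \tilde\mu_G,
\]
and analogously $(p_2)_*\tilde\pi=\tilde\mu_H$, where in the penultimate step I used that $\pi$ is a coupling of $\mu_G$ and $\mu_H$, i.e. $(q_1)_*\pi=\mu_G$ and $(q_2)_*\pi=\mu_H$.

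There is essentially no obstacle here: the claim is a general measure-theoretic fact about pushforwards of couplings by measurable maps, and all the needed measurability has already been established through the Lipschitz continuity of $\Psi_G,\Psi_H$. The nontrivial content of the broader stability theorem lies in the subsequent argument bounding the cost $J_\infty(\tilde\pi)$; the claim itself merely sets up $\tilde\pi$ as a legitimate candidate coupling to which that cost estimate will be applied.
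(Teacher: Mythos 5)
Your proposal is correct and takes essentially the same route as the paper: both verify that the marginals of $\tilde{\pi}$ are $\tilde{\mu}_G$ and $\tilde{\mu}_H$, the paper by evaluating $\tilde{\pi}(S\times\mathbf{Barcodes})$ directly on a measurable set $S$ and you by the equivalent functoriality identity $(p_1)_*(\Psi_G\times\Psi_H)_*\pi=(\Psi_G)_*(q_1)_*\pi$. Your added remark that Lemma \ref{psilip} supplies the continuity (hence measurability) needed for the pushforwards to be well-defined is a small point the paper leaves implicit.
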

\begin{proof}
	Let us show that $\tilde{\pi}$ has these measures as marginals. Let $S \subset \mathbf{Barcodes}$ be a measurable subset of the barcode space
	\begin{align*}
	\tilde{\pi}(S \times \mathbf{Barcodes}) & = \pi((\Psi_{G} \times \Psi_{H})^{-1}(S \times \mathbf{Barcodes}))\\
	& = \pi(\psi_{G}^{-1}(S) \times H)\\
	&= \mu_{G}(\psi_{G}^{-1}(S))\\
	&= \tilde{\mu}_{G}(S)
	\end{align*}
	A symmetric argument works to show that the other marginal is $\tilde{\mu}_{H}$
\end{proof}

\begin{claim}
	Let $P$ be a Polish space, and $T$ any topological space, and let $f: P \to T$ be continuous. If $\Pi$ is a probability measure on $P$, then $f(\supp{\pi}))$ is dense in $\supp{f_{\ast}(\pi)}$.
\end{claim}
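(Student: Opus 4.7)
The plan is to argue directly from the definition of support, using continuity of $f$ to pull back open neighborhoods and the key fact that on a Polish space the complement of the support of a Borel probability measure has measure zero. Fix $y \in \supp{f_\ast \Pi}$ and an arbitrary open neighborhood $V \subset T$ of $y$. It suffices to produce a point of $f(\supp{\Pi})$ inside $V$, for then $y$ lies in the closure of $f(\supp{\Pi})$.

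By definition of the support, $(f_\ast \Pi)(V) > 0$, so by the definition of the pushforward $\Pi(f^{-1}(V)) > 0$. Here $f^{-1}(V)$ is open because $f$ is continuous. The crucial step is to note that since $P$ is Polish, it is second countable, and so $\Pi(P \setminus \supp{\Pi}) = 0$: one covers the open set $P \setminus \supp{\Pi}$ by countably many open sets of $\Pi$-measure zero (picked from a countable base, each lying in the complement of the support) and applies countable subadditivity. Consequently $\Pi(f^{-1}(V) \cap \supp{\Pi}) = \Pi(f^{-1}(V)) > 0$, so this intersection is nonempty, and any point $x$ in it produces $f(x) \in V \cap f(\supp{\Pi})$ as required.

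The only mild obstacle is justifying $\Pi(P \setminus \supp{\Pi}) = 0$; this is where the Polish hypothesis (really just second countability of $P$, which follows from separability and metrizability) enters, and it is a standard lemma that I would cite or prove in one line via a countable base argument. No hypothesis on $T$ beyond being a topological space is needed, since the argument only uses openness of $V$ and of $f^{-1}(V)$.
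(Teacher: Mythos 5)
Your argument for the density direction is essentially the paper's: both reduce to the fact that $\Pi(P\setminus\supp{\Pi})=0$, which the paper obtains by noting that a Borel probability measure on a Polish space is Radon, while you obtain it from second countability plus countable subadditivity over a countable base of null open sets contained in the complement of the support --- the same fact by a slightly more elementary route. The only thing you omit is the containment $f(\supp{\Pi})\subseteq\supp{f_\ast\Pi}$, which the paper proves first (if $x\in\supp{\Pi}$ and $V$ is a neighborhood of $f(x)$, then $f^{-1}(V)$ is an open neighborhood of $x$, hence has positive $\Pi$-measure, so $(f_\ast\Pi)(V)>0$); since ``dense in'' presupposes this inclusion, you should add that one line, though it is immediate and the substantive half of the claim is the one you proved.
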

\begin{proof}
	Firstly, take $x \in \supp{\pi}$, and let $V$ be any open neighborhood of $f(x)$. $(f_{\ast}(\pi))(V) = \mu(f^{-1}(V))$, and since $f^{-1}(V)$ is an open neighborhood containing $x$ (which is a point in the support of $\pi$), $f^{-1}(V)$ has positive measure. Thus every neighborhood of $f(x)$ has positive measure. This shows the inclusion
	\[f(\supp{\pi})) \subseteq \supp{f_{\ast}(\pi)}\]
	
	Next, take $y \in \supp{f_{\ast}(\pi)}$, and let $V$ be an open set containing $y$. Then $(f_{\ast} \pi)(V) > 0$. Since $\pi$ is a probability measure on a Polish space, it is Radon, and hence any subset of $P \setminus \supp{\pi}$ has measure zero. Hence $f^{-1}(V)$ must intersect the support of $\pi$. Let $x \in f^{-1}(V) \cap \supp{\pi}$. Then $f(x) \in V \cap f(\supp{\pi})$. Thus, every neighborhood $V$ of $y$ meets $f(\supp{\pi})$, proving density.
\end{proof}

\section{Inverse Problem}
\label{mainresults}
Although barcode transforms are richer invariants than single barcodes, there still exist pairs of metric measure graphs $G$ and $H$ which are not isometric but for which $\mathcal{BT}(G) = \mathcal{BT}(H)$ and $\mathcal{BMT}(G) = \mathcal{BMT}(H)$. The following is a particularly simple example, others can be found in \cite{dey2015comparing}.

	\begin{counterexample}
		\label{counterexample}
	In the following figure \ref{btnotinjfigure}, the lengths of the small branches are all equal, as are the lengths of the middle-sized branches, and finally both central edges have the same length too. For every middle-sized branch in $G$ there is a corresponding branch in $H$ with the same number of small branches, not necessarily on the same side. The barcodes for points on matching branches are the same. Similarly the barcodes for points along the central edges of $G$ and $H$ agree.\\
	
	For any metric graph $X$, we can glue arbitrarily small copies of $G$ and $H$ along the midpoint of their central branches. The resulting pair of graphs will have the same barcode embedding, but will generically not be isometric. Thus, the barcode embedding is not injective on any Gromov-Hausdorff ball.
	
	\begin{center}
	\begin{figure}[htb]
	\labellist
	\small\hair 2pt
	\pinlabel $G$ [t] at 210 207
	\endlabellist
	\centering
	\includegraphics[scale=0.5]{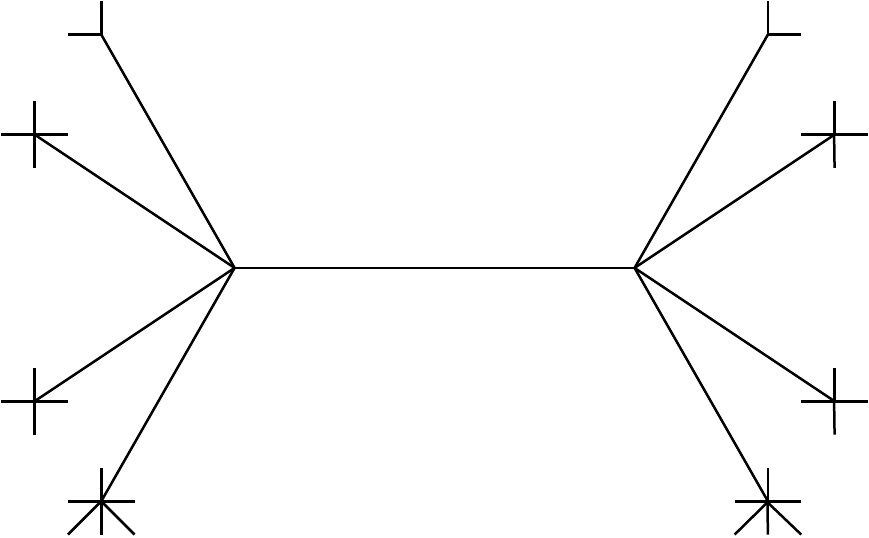} 
		\labellist
	\small\hair 2pt
	\pinlabel $H$ [t] at 210 207
	\endlabellist
	\includegraphics[scale = 0.5]{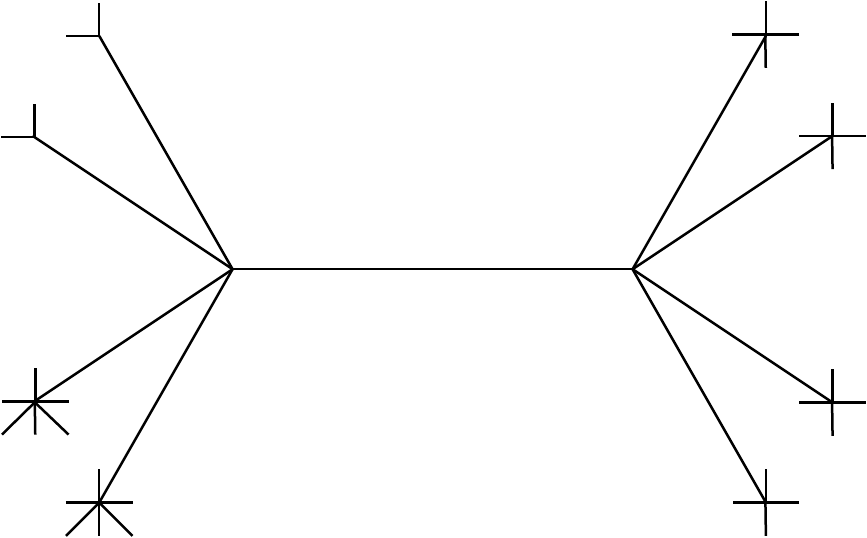}
	
	\caption{$\mathcal{BT}(G) = \mathcal{BT}(H)$, but $G$ and $H$ are not isomorphic.}
	\label{btnotinjfigure}	
	\end{figure}
	\end{center}
	\end{counterexample} 
	
This demonstrates that the barcode transform is not injective on the space $\mathbf{MGraphs}$. That being the case, our approach to the inverse problem is manifold. Firstly, we derive the following local injectivity result, which applies only to the barcode transform.

\begin{theorem}
    \label{btlocalinj}
    $\mathcal{BT}$ is locally injective in the following sense:  $\forall G \in \mathbf{MGraphs}$ there exists a constant $\epsilon(G) > 0$ such that $\forall G' \in \mathbf{MGraphs}$ with $0 < d_{GH}(G,G') < \epsilon(G)$ we have $d_{PD}(G,G') > 0$. 
\end{theorem}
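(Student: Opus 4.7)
The plan is to argue by contradiction, leveraging the bi-Lipschitz equivalence of $d_{FD}$ and $d_B$ on Reeb graphs (Theorem~\ref{ReebInequality}) to transfer barcode coincidences into Reeb graph isometries. Suppose $G' \neq G$ with $\delta := d_{GH}(G, G') < \epsilon(G)$ satisfies $d_{PD}(G, G') = 0$, so $\mathcal{BT}(G) = \mathcal{BT}(G')$; the aim is to conclude that $G \cong G'$, contradicting $d_{GH}(G, G') > 0$.

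The first step is to select a ``rigid'' basepoint $p \in G$ enjoying two generic properties: (a) the Reeb graph $\Phi_G(p)$ has pairwise distinct critical values with minimum gap $a_p > 0$; (b) the fiber $\Psi_G^{-1}(\Psi_G(p))$ is a finite discrete subset of $G$, necessarily an orbit under the isometry group of $G$. Such $p$ exists whenever $G$ is neither a single point nor a circle; these degenerate cases are handled by ad hoc arguments directly comparing the unique barcode of $G$ to $\mathcal{BT}(G')$. By continuity of $\Psi_G$ on the compact space $G$, property (b) provides quantitative isolation: for any $r > 0$ there is $c(r) > 0$ such that $d_B(\Psi_G(q), \Psi_G(p)) < c(r)$ forces $q$ to lie within $r$ of the fiber. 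I then pick $\epsilon(G)$ so that $18\epsilon < c(r)$ and $6\epsilon + r + \epsilon < a_p / (8(1+22K))$ for some fixed $K \in (0, 1/22]$ and a suitably small~$r$.

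Given such $G'$, fix a correspondence $\mathcal{M}$ between $G$ and $G'$ realizing $\delta$, let $p^* \in G'$ be paired with $p$, pick $p' \in G'$ with $\Psi_{G'}(p') = \Psi_G(p)$ (which exists since $\Psi_G(p) \in \mathcal{BT}(G')$), and select any $q \in G$ with $(q, p') \in \mathcal{M}$. Theorem~\ref{pairedgraphs} yields $d_{FD}(\Phi_G(p), \Phi_{G'}(p^*)) \leq 6\delta$ and $d_B(\Psi_G(q), \Psi_G(p)) = d_B(\Psi_G(q), \Psi_{G'}(p')) \leq 18\delta < c(r)$, so property~(b) places $q$ within $r$ of the fiber of $p$. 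Post-composing $\mathcal{M}$ with an appropriate automorphism of $G$ that carries the fiber element closest to $q$ back to $p$, we may arrange $d_G(p, q) < r$, whence the correspondence cost gives $d_{G'}(p^*, p') < r + \delta$. Combining this with Lemma~\ref{psilip} and the triangle inequality produces
\[
d_{FD}(\Phi_G(p), \Phi_{G'}(p')) \;\leq\; d_{FD}(\Phi_G(p), \Phi_{G'}(p^*)) + d_{FD}(\Phi_{G'}(p^*), \Phi_{G'}(p')) \;\leq\; 6\delta + r + \delta,
\]
which lies below the threshold of Theorem~\ref{ReebInequality}. Since $d_B(\Phi_G(p), \Phi_{G'}(p')) = 0$, the lower bound $K\, d_{FD} \leq d_B$ forces $d_{FD}(\Phi_G(p), \Phi_{G'}(p')) = 0$; Lemma~\ref{FDzero} then furnishes a function-preserving isometry between $\Phi_G(p)$ and $\Phi_{G'}(p')$, and the forthcoming Corollary~\ref{recovergraph} (recovery of a metric graph from any single of its Reeb graphs) upgrades this to an isometry $G \cong G'$, contradicting $d_{GH}(G, G') > 0$.

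The main obstacle is establishing property (b) for graphs carrying nontrivial automorphism groups, where the fiber $\Psi_G^{-1}(\Psi_G(p))$ is forced to be an entire orbit; the argument must handle the possibility that the correspondence pairs $p'$ with a distant orbit element of $p$, and the alignment via an automorphism must be executed carefully so that the cost estimate on $\mathcal{M}$ is preserved. A secondary difficulty is the dependence on Corollary~\ref{recovergraph}, whose proof is logically independent of the present theorem and must be developed in its own right.
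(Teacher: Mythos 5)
Your overall architecture---isolate the fiber of $\Psi_G$ over $\Psi_G(p)$, split into the cases where the matched point lands near the fiber versus far from it, and in the near case push the comparison up to the Reeb-graph level so that Theorem~\ref{ReebInequality} and Corollary~\ref{recovergraph} force $G\cong G'$---is the same as the paper's. But one step is genuinely wrong: your property (b) asserts that the fiber $\Psi_G^{-1}(\Psi_G(p))$ is an orbit of the isometry group of $G$, and you rely on this when you ``post-compose $\mathcal{M}$ with an appropriate automorphism of $G$ that carries the fiber element closest to $q$ back to $p$.'' The fiber is \emph{not} an orbit in general: the paper exhibits (in the figure accompanying the discussion of Proposition~\ref{genericinj}) a metric graph with \emph{trivial} automorphism group in which two distinct basepoints nevertheless produce identical barcodes. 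For such a graph there is no automorphism carrying the fiber element near $q$ back to $p$, so your realignment step has nothing to post-compose with, and the bound $d_{G'}(p^*,p')<r+\delta$ does not follow.

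The gap is easily repaired, and the repair is exactly what the paper does: do not try to move $q$ back to $p$. Let $\tilde p$ be the fiber element with $d_G(\tilde p, q)<r$. Since $\Psi_G(\tilde p)=\Psi_G(p)=\Psi_{G'}(p')$, it suffices to compare $\Phi_G(\tilde p)$ with $\Phi_{G'}(p')$, and Lemma~\ref{psilip} together with Theorem~\ref{pairedgraphs} gives
\[
d_{FD}\bigl(\Phi_G(\tilde p),\Phi_{G'}(p')\bigr)\;\le\; d_{FD}\bigl(\Phi_G(\tilde p),\Phi_G(q)\bigr)+d_{FD}\bigl(\Phi_G(q),\Phi_{G'}(p')\bigr)\;\le\; r+6\delta,
\]
with no automorphism needed; note that $\Phi_G(\tilde p)$ has the same critical values as $\Phi_G(p)$ (equal barcodes), so the gap hypothesis of Theorem~\ref{ReebInequality} is still available. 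With this substitution the remainder of your argument---forcing $d_{FD}=0$, then invoking Lemma~\ref{FDzero} and Corollary~\ref{recovergraph} to conclude $G\cong G'$, contradicting $d_{GH}(G,G')>0$---goes through and coincides with the paper's Case~1; your constant $c(r)$ plays the role of the paper's $\delta_r$, and your far-from-fiber alternative is the paper's Case~2.
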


Secondly, we identify a subset of $\mathbf{MGraphs}$ on which the barcode transform and barcode measure transform are injective, and show that it is dense.

\begin{theorem}
    \label{psiinjbtinj}
    The barcode transform and barcode measure transform are injective when restricted to the set $\{G \in \mathbf{MGraphs} \mid \Psi_{G} \operatorname{ injective }\}$.
\end{theorem}

\begin{prop}
\label{densebamboo}
The set $\{G \in \mathbf{MGraphs} \mid \Psi_{G} \operatorname{ injective }\}$ is dense in $\mathbf{MGraphs}$.
\end{prop}

Combining this proposition with the following result from \cite{burago2001course} demonstrates that any compact length space can be approximated by graphs in $\{G \in \mathbf{MGraphs} \mid \Psi_{G} \operatorname{ injective }\}$. This suggests that we can study the structure of more complex geodesic spaces by understanding the barcode transforms of approximating graphs. 

\begin{prop}[\cite{burago2001course},7.5.5]
Every compact length space can be obtained as a Gromov-Hausdorff limit of finite graphs. 
\end{prop}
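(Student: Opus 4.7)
The plan is to follow the standard \(\varepsilon\)-net construction from metric geometry, with a crucial two-scale twist. Given \(\varepsilon \in (0,1)\), compactness of \(X\) provides a finite \(\varepsilon\)-net \(N_\varepsilon = \{x_1, \ldots, x_{n(\varepsilon)}\}\subset X\). I form the finite metric graph \(G_\varepsilon\) with vertex set \(N_\varepsilon\) and edge \(\{x_i, x_j\}\) of weight \(d_X(x_i, x_j)\) included whenever \(d_X(x_i, x_j) \leq \sqrt{\varepsilon}\). The superlinear threshold \(\sqrt{\varepsilon} \gg \varepsilon\) is the essential design choice; the claim is then that \(d_{GH}(X, G_\varepsilon) \to 0\) as \(\varepsilon \to 0\).

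To bound \(d_{GH}(X, G_\varepsilon)\) I exhibit the correspondence \(\mathcal{M} = \{(x, v) \in X \times N_\varepsilon \mid d_X(x, v) \leq \varepsilon\}\), surjective because \(N_\varepsilon\) is an \(\varepsilon\)-net. Its cost is the supremum of \(|d_X(x, y) - d_{G_\varepsilon}(v, w)|\) over \((x, v), (y, w) \in \mathcal{M}\). Since every edge of \(G_\varepsilon\) carries its ambient \(X\)-weight, the triangle inequality immediately gives the easy direction \(d_{G_\varepsilon}(v, w) \geq d_X(v, w) \geq d_X(x, y) - 2\varepsilon\).

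The substantive step is the matching upper bound. Using that \(X\) is a length space, I pick a path \(\gamma \colon [0, L] \to X\) from \(v\) to \(w\) with \(L \leq d_X(v, w) + \varepsilon\). I partition \(\gamma\) into \(m\) arcs of arc-length at most \(\delta := \sqrt{\varepsilon}/3\), producing points \(p_0 = v, p_1, \ldots, p_m = w\) with \(m = O(L/\sqrt{\varepsilon})\), and snap each interior \(p_k\) to its nearest net point \(v_k\). Then \(d_X(v_k, v_{k+1}) \leq d_X(p_k, p_{k+1}) + 2\varepsilon \leq \delta + 2\varepsilon < \sqrt{\varepsilon}\) for small \(\varepsilon\), so \(\{v_k, v_{k+1}\}\) is an edge of \(G_\varepsilon\). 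Telescoping (the endpoint snap terms vanish because \(v_0 = v\) and \(v_m = w\)) yields
\[ d_{G_\varepsilon}(v, w) \leq \sum_{k=0}^{m-1} d_X(v_k, v_{k+1}) \leq L + 2(m-1)\varepsilon \leq d_X(v, w) + O(\mathrm{diam}(X) \sqrt{\varepsilon}). \]

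Combining the two bounds makes the cost of \(\mathcal{M}\) at most \(O(\mathrm{diam}(X) \sqrt{\varepsilon})\), and letting \(\varepsilon \to 0\) produces \(G_\varepsilon \to X\) in the Gromov--Hausdorff topology. The hard part is precisely the balancing in the previous paragraph: a naive one-scale discretization (partitioning \(\gamma\) at the net scale \(\varepsilon\)) incurs \(O(\varepsilon)\) snap error per step over \(O(L/\varepsilon)\) steps, for a cumulative error of order \(L\) — only a multiplicative bound \(d_{G_\varepsilon} \lesssim 2 d_X\), which does not vanish with \(\varepsilon\). Using a coarser partition at scale \(\sqrt{\varepsilon}\) while keeping the finer net at scale \(\varepsilon\) (and setting the edge threshold to \(\sqrt{\varepsilon}\) to straddle them) converts this into the additive \(O(\sqrt{\varepsilon})\) cost needed for GH-convergence.
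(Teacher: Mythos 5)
The paper offers no proof of this statement---it is quoted directly from \cite{burago2001course} (Proposition 7.5.5)---and your argument is essentially the standard one given there: a net at a fine scale together with an edge threshold at a strictly coarser scale, balanced so that the per-step snapping error, summed over the number of subdivision steps of an almost-shortest path, still vanishes (you take net scale $\varepsilon$ and threshold $\sqrt{\varepsilon}$ where the textbook takes net scale $\delta\ll\varepsilon$ and threshold $\varepsilon$, which is the same device). The argument is correct; the only loose end is that your correspondence $\mathcal{M}$ surjects onto the vertex set of $G_\varepsilon$ rather than onto the whole metric graph (edge interiors included, which is what membership in $\mathbf{MGraphs}$ requires), but since every point of $G_\varepsilon$ lies within $\sqrt{\varepsilon}/2$ of a vertex this is repaired by pairing each edge-interior point with the partners of a nearest endpoint, at an additional cost of $O(\sqrt{\varepsilon})$.
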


Next, we show that for most graph topologies, almost every assignment of edge lengths produces a metric graph in $\{G \in \mathbf{MGraphs} \mid \Psi_{G} \operatorname{ injective }\}$. 

\begin{definition}
	Let $X = (V,E)$ be a topological graph. Every vector $v \in \mathbb{R}_{>0}^{E}$ induces a metric structure $G = (X,v)$ on $X$ by assigning edge weights using the entries of $v$ and taking the shortest path metric.
\end{definition}

\begin{theorem}
	\label{btgeninj}
	 Fix a topological graph $X = (V,E)$ with (i) no topological self-loops, and (ii) at least three vertices of valence not equal to two. Let $\mu$ be a measure on $\mathbb{R}_{>0}^{E}$ that is absolutely continuous with respect to the Lebesgue measure. Then, for $\mu$-a.e. vector $v \in \mathbb{R}_{>0}^{E}$,
		\[G = (X,E) \in \{G \in \mathbf{MGraphs} \mid \Psi_{G} \operatorname{ injective }\}\]
\end{theorem}

Lastly, even for those remaining exceptional topologies where injectivity of $\Psi_{G}$ cannot be guaranteed, we demonstrate that restricting ourselves to a full-measure set of edge weights ensures injectivity of the barcode transform.

\begin{theorem}
	\label{btgeninj2}
	 For each topological graph $X = (V,E)$, let $\mu_{X}$ be a measure on $\mathbb{R}_{>0}^{E}$ that is absolutely continuous with respect to the Lebesgue measure. There exists a $\mu_{X}$-full measure subset $\Omega_{X} \subset \mathbb{R}_{>0}^{E}$ such that the following is true: If $X$ and $Y$ are topological graphs, and $v \in \Omega_{X}$ and $w \in \Omega_{Y}$ are length assignments with
	 \[\mathcal{BT}((X,v)) = \mathcal{BT}((Y,w)),\]
	 then $(X,v)$ and $(Y,w)$ are isometric.
\end{theorem}

Thus, restricting ourselves to those metric graphs of the form $(X,v)$ for $X$ a topological graph and $v \in \Omega_{X}$, we can conclude that the persistence distortion distance $d_{PD}$ is a true metric.

\begin{remark}
	For any topological graph $X$, $\Omega_{X} \subset \mathbb{R}_{>0}^{E}$ is the complement of the union of finitely many hyperplanes. Theorems \ref{btgeninj} and \ref{btgeninj2} make use of the Lebesgue-absolutely continuous measures because this ensures that hyperplanes have measure zero.
\end{remark}

\section{Overview of the Proofs of Theorems  \ref{btlocalinj}, \ref{psiinjbtinj}, and \ref{btgeninj}}

The proof of Theorem \ref{btlocalinj} makes use of the analogous result for Reeb graphs from \cite{localequiv}, which can be found in subsection \ref{reebbackground} as Theorem \ref{ReebInequality}.\\

The proof of Theorem \ref{psiinjbtinj} is based on the following result, which is interesting in its own right.

\begin{theorem}
    \label{injinv}
    Let $(G,d_G)$ be a compact, connected metric graph with $\Psi_{G}$ injective. Then $\Psi_{G}$ is an isometry from $(G,d_G)$ to $(\mathcal{BT}(G),\hat{d}_B)$, which is the intrinsic path metric space derived from the metric space $(\mathcal{BT}(G),d_B)$ \footnote{To be precise, an intrinsic path metric is always defined in reference to a class of admissible paths. Here we are considering all $d_B$-continuous paths}. Moreover, if $G$ is also equipped with a measure then this isometry is measure-preserving.
\end{theorem}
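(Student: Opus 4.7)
The plan is to establish $\hat{d}_{B}(\Psi_G(p), \Psi_G(q)) = d_G(p, q)$ for all $p, q \in G$ by proving both inequalities, and then obtain the measure-preserving statement for free. By Lemma \ref{psilip} the map $\Psi_G : (G,d_G) \to (\mathcal{BT}(G), d_B)$ is $1$-Lipschitz, so combining this with the injectivity hypothesis and the compactness of $G$, it is a homeomorphism onto its image. For the upper bound, given $p, q \in G$ I would choose a $d_G$-geodesic $\gamma$ of length $d_G(p,q)$; then $\Psi_G \circ \gamma$ is a $d_B$-continuous path in $\mathcal{BT}(G)$ of length at most $d_G(p,q)$, so $\hat{d}_B(\Psi_G(p), \Psi_G(q)) \leq d_G(p,q)$.

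The crux of the argument is a local isometry lemma: for each $p$ in the interior of an edge $e$, there exists $\epsilon(p) > 0$ such that $d_B(\Psi_G(p), \Psi_G(q)) = d_G(p,q)$ for every $q \in e$ with $d_G(p,q) < \epsilon(p)$; an analogous statement holds for $q$ close to a vertex $p$ along a single incident edge. To prove it, I would partition $G = G_p^+ \sqcup G_p^-$ according to which direction from $p$ the geodesic to a generic point begins; for nearby $q$ on the same edge, the distance function $f_q$ agrees with $f_p - d_G(p,q)$ on one piece and with $f_p + d_G(p,q)$ on the other. Invoking the Reeb-to-persistence dictionary of Section \ref{sec:reebdictionary}, each off-diagonal point of $\Psi_G(p)$ corresponds to a pair of critical values of $f_p$, and under the perturbation this point translates in $\R^2$ by a vector whose entries lie in $\{+d_G(p,q), -d_G(p,q)\}$, hence of $L^\infty$-norm exactly $d_G(p,q)$. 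The identity matching therefore achieves cost $d_G(p,q)$, and because $\Psi_G(p)$ has only finitely many off-diagonal points, each of positive persistence, restricting $\epsilon(p)$ to be less than half the minimum persistence prevents any cheaper matching to the diagonal.

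With the local isometry in hand, the reverse inequality follows by a length-computation argument. Any $d_B$-continuous rectifiable path $\sigma \colon [0,1] \to \mathcal{BT}(G)$ from $\Psi_G(p)$ to $\Psi_G(q)$ pulls back along $\Psi_G^{-1}$ to a continuous path $\gamma$ in $G$ from $p$ to $q$, which in a finite graph crosses vertices only finitely often. Choosing a partition of $[0,1]$ that includes all such vertex-crossing times and is refined so that each sub-interval's image lies within a local-isometry neighborhood on a single edge, we obtain $d_B(\sigma(t_i), \sigma(t_{i+1})) = d_G(\gamma(t_i), \gamma(t_{i+1}))$ on every sub-interval. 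Passing to the supremum over refinements yields $L_{d_B}(\sigma) \geq L_{d_G}(\gamma) \geq d_G(p,q)$, and infimizing over $\sigma$ gives $\hat{d}_B(\Psi_G(p), \Psi_G(q)) \geq d_G(p,q)$. The measure-preserving claim is then immediate: since $\mathcal{BMT}(G, \mu_G) = (\Psi_G)_{\ast}\mu_G$ by definition, the bijective isometry $\Psi_G$ pushes $\mu_G$ forward to $\mathcal{BMT}(G, \mu_G)$ by construction.

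The main obstacle is the local-isometry lemma. The $1$-Lipschitz property alone yields only $d_B \leq d_G$, the wrong direction; converting this to equality requires the full Reeb-to-persistence dictionary, together with a non-degeneracy estimate (the positive lower bound on persistence at a fixed basepoint) to rule out cheaper bottleneck matchings to the diagonal.
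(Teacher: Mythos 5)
Your overall architecture matches the paper's: $\Psi_G$ is a homeomorphism onto its image, a local isometry lemma does the real work (this is the paper's Proposition \ref{localisom}), and a partition-refinement/compactness argument upgrades the local statement to equality of path lengths and hence of intrinsic metrics. The gap is in your proof of the local isometry lemma itself. The claim that $G$ decomposes into two pieces on which $f_q = f_p - d_G(p,q)$ and $f_q = f_p + d_G(p,q)$ respectively is false as soon as $G$ contains a cycle: parametrizing a cycle of length $L$ through $p$ and $q$ and looking near the point antipodal to the midpoint of $p$ and $q$, one finds that $f_q - f_p$ sweeps through every value in $[-d, d]$. Consequently it is not true that every off-diagonal point of $\Psi_G(p)$ translates by a vector with both entries in $\{\pm d\}$: birth times are heights of downforks, and a downfork whose two descending branches straddle the two sides of $p$ simply slides to a nearby location at the \emph{same} height, contributing a displacement of $0$ in that coordinate. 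Death times behave better, but even there one must separately treat the interval graph (where the only death time is $0$) and the case where the only upfork of $f_p$ occurs at $p$ itself. What the paper actually establishes (Proposition \ref{birthdeathvary}) is the much weaker statement that \emph{some single} interval moves by exactly $d$ in $L^\infty$, and even that requires a case analysis together with a verification that the upfork--downfork pairing persists under small motion of the basepoint --- without which ``tracking an interval'' from $\Psi_G(p)$ to $\Psi_G(q)$ is not well defined.

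Two smaller issues. First, to rule out a cheaper bottleneck matching you need $\epsilon(p)$ smaller than half the minimum $L^\infty$ distance between \emph{distinct} points of $\Psi_G(p)$ as well, not only half the minimum persistence: your condition forbids matching the moved point to the diagonal but not to a different nearby off-diagonal point. Second, in the reverse-inequality step, a continuous path in a graph may cross a vertex infinitely often, so a partition ``including all vertex-crossing times'' need not exist; the paper sidesteps this by proving the local isometry on full metric balls around every point (vertices included) and extracting a finite subcover of the time interval by compactness.
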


This theorem states that when $\Psi_{G}$ is injective we can recover $G$ as a metric graph from $\mathcal{BT}(G)$ or $(G,\mu_G)$ from $\mathcal{BMT}(G)$. Thus, when $G$ and $G'$ are two metric graphs (or metric measure graphs) with $\Psi_{G}$ and $\Psi_{G'}$ injective, the equality of their barcode transforms (or barcode measure transforms) implies equality of the original graphs, demonstrating injectivity. The proof of Theorem \ref{injinv} relies in turn on the following observation.

	\begin{prop}
	\label{localisom}
	If $(G,d_G)$ is a compact, connected, metric graph that is not a circle, then the map $\Psi_{G}: G \to B$ is a local isometry in the following sense. For any fixed basepoint $p \in G$ there is an open neighborhood $N$ such that $\forall q \in N$, $d_G(p,q) = d_{B}(\Psi_{G}(p),\Psi_{G}(q))$.
	\end{prop}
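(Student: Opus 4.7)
The upper bound $d_B(\Psi_G(p),\Psi_G(q))\le d_G(p,q)$ is already given by Lemma~\ref{psilip}, so only the reverse inequality requires an argument, and only for $q$ sufficiently close to $p$. My strategy is to single out one feature $\xi^{*}\in\Psi_G(p)$ that tracks the distance from $p$ to a specific graph vertex, and to show that in any matching between $\Psi_G(p)$ and $\Psi_G(q)$ the partner of $\xi^{*}$ must lie at $L^{\infty}$-distance at least $d_G(p,q)$.

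\textbf{Choice of witness feature and of $\epsilon$.} Since $G$ is not a circle, the set $V^{*}=\{v\in V:\deg(v)\neq 2\}$ is nonempty. By the dictionary of Section~\ref{sec:reebdictionary}, together with Lemma~\ref{lem:upforkvertex}, every $v\in V^{*}$ contributes a feature of $\Psi_G(p)$ one of whose coordinates (a death time if $v$ is an upfork of $f_p$, a birth time if it is a downfork) equals $d_G(p,v)$. I pick $v^{*}\in V^{*}$ so that $d_G(p,v^{*})$ is isolated in the finite set $\{d_G(p,v):v\in V^{*}\}$ and the associated feature $\xi^{*}$ is off the diagonal; since $\Psi_G(p)$ is finite, there is then a constant $D>0$ such that $\xi^{*}$ sits at $L^{\infty}$-distance at least $D$ from every other distinct feature and from the diagonal. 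Let $r$ be the distance from $p$ to the nearest vertex of $V^{*}\setminus\{p\}$, with $r=+\infty$ if there is no such vertex, and set $\epsilon=\min(D/4,\,r/2)$.

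\textbf{Lower bound.} For $q\in B(p,\epsilon)$, the ball contains no vertex of $V^{*}$ other than perhaps $p$, so the geodesic from $p$ to $v^{*}$ and that from $q$ to $v^{*}$ share an initial edge. Elementary arithmetic on this edge gives $d_G(q,v^{*})=d_G(p,v^{*})\pm d_G(p,q)$, so the corresponding feature $\xi^{*}_q\in\Psi_G(q)$ satisfies $d_{L^{\infty}}(\xi^{*},\xi^{*}_q)=t$, where $t:=d_G(p,q)$. The same combinatorial stability identifies every feature $\nu\in\Psi_G(q)$ with a feature $\xi(\nu)\in\Psi_G(p)$ such that $d_{L^{\infty}}(\nu,\xi(\nu))\le t$. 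Now suppose that some matching $\mathcal{M}$ has cost strictly less than $t$, pairing $\xi^{*}$ with $\eta\in\Psi_G(q)\cup\{\Delta\}$. If $\eta=\Delta$, then $d_{L^{\infty}}(\xi^{*},\Delta)\ge D>4\epsilon>t$, contradiction. Otherwise $\eta\in\Psi_G(q)$ and
\[
d_{L^{\infty}}(\xi^{*},\xi(\eta))\le d_{L^{\infty}}(\xi^{*},\eta)+d_{L^{\infty}}(\eta,\xi(\eta))<t+t=2t<D,
\]
which forces $\xi(\eta)=\xi^{*}$, hence $\eta=\xi^{*}_q$, and then $d_{L^{\infty}}(\xi^{*},\eta)=t$, again a contradiction. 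Therefore $d_B(\Psi_G(p),\Psi_G(q))\ge t$, and combined with the upper bound, equality holds.

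\textbf{Main obstacle.} The principal technical difficulty is establishing the isolation constant $D>0$ in degenerate situations (several distinct features of $\Psi_G(p)$ coinciding, or the natural witness flirting with the diagonal); the remedy is a careful choice of $v^{*}$, and, if needed, a multiset treatment of coincident features. A subsidiary point is the rigorous set-up of the correspondence $\nu\mapsto\xi(\nu)$, which rests on the fact that the absence of $V^{*}$-crossings inside $B(p,\epsilon)$ keeps the critical structure of $f_q=d_G(q,\cdot)$ combinatorially fixed as $q$ varies, so that each feature of $\Psi_G(q)$ is the image of a unique feature of $\Psi_G(p)$ under a piecewise-linear shift of magnitude at most $t$ in each coordinate.
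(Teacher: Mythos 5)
Your overall strategy is the same as the paper's: the upper bound comes from Lemma~\ref{psilip}, and the lower bound comes from exhibiting a single witness feature that moves by exactly $d_G(p,q)$ and is isolated (by a constant $D$) from all other features and from the diagonal, so that the triangle inequality forces any matching to incur cost at least $d_G(p,q)$. The paper packages the existence of such a witness as Proposition~\ref{birthdeathvary}, and the bulk of the work lives there. Your witness construction, however, has concrete gaps. First, your choice $\epsilon=\min(D/4,\,r/2)$ does not guarantee $d_G(q,v^{*})=d_G(p,v^{*})\pm d_G(p,q)$. Take a long cycle carrying a single valence-three vertex $v^{*}$ and put $p$ nearly antipodal to it, so that the two arcs from $p$ to $v^{*}$ have lengths $L$ and $L+\eta$ with $\eta$ tiny; for $q$ on the long arc at distance $s\in(\eta/2,\epsilon)$ from $p$ one gets $d_G(q,v^{*})=L+\eta-s$, which differs from $d_G(p,v^{*})$ by $\eta-s<s$, so your witness coordinate moves by strictly less than $t$ and the lower bound fails. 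The correct $\epsilon$ must also be bounded by half the length defect of the non-geodesic directions at $p$ toward $v^{*}$ --- this is exactly what the paper's Lemma~\ref{epsilonlemma} supplies, and it depends on more than the distance to the nearest branch vertex. Second, when $p$ is itself the only vertex of valence $\neq 2$ (a rose/wedge of circles based at $p$), the set $V^{*}\setminus\{p\}$ is empty and there is no vertex to track; the witness must instead come from the death-time-$0$ features at $p$ acquiring death time $d(p,q)$ when the basepoint moves onto a petal, a case the paper treats explicitly in the proof of Proposition~\ref{birthdeathvary}.

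Third, the step you defer to ``combinatorial stability'' --- that the upfork/downfork containing $d_G(\cdot,v^{*})$ as a coordinate remains an upfork/downfork for $f_q$, that its pairing partner persists so that $\xi^{*}_q$ is well defined, and that the partner's coordinate moves by at most $t$ --- is not a subsidiary point but the main content of the argument; the paper spends the final paragraphs of the proof of Proposition~\ref{birthdeathvary} verifying precisely that the minimax pairing condition is stable under small motions of the basepoint. Relatedly, your final deduction ``$\xi(\eta)=\xi^{*}$ hence $\eta=\xi^{*}_q$'' needs the tracking map to be injective over $\xi^{*}$, which fails verbatim when features coincide; the cleaner route (and the one the paper takes) is to argue from the $\Psi_G(q)$ side that $\xi^{*}_q$ sits at distance exactly $t$ from $\xi^{*}$ and at distance at least $D-t>t$ from everything else in $\Psi_G(p)\cup\{\Delta\}$. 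None of these gaps requires a new idea --- they are all repaired by the paper's Lemma~\ref{epsilonlemma} and the case analysis of Proposition~\ref{birthdeathvary} --- but as written the proof fails on the configurations above.
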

	
	The proof of Proposition \ref{densebamboo} makes use of Theorem \ref{injinv} by constructing, for every metric graph $G$, an approximating sequence $G_{n}$ in the Gromov-Hausdorff metric for which $\Psi_{G_n}$ is injective.\\

	To obtain Theorem \ref{btgeninj}, we combine Theorem \ref{injinv} with the following proposition, and the fact that hyperplanes have Lebesgue measure zero:
	
	\begin{prop}
		\label{genericinj}
		Fix a topological graph $X = (V,E)$ with (i) no topological self-loops, and (ii) at at least three vertices of valence at least three. Then there exists a collection of hyperplanes $\{L_1, \cdots, L_n\} \subset \mathbb{R}_{>0}^{E}$ with the following property: If $v \in \mathbb{R}_{>0}^{E}$ is an edge length such that $v \notin \bigcup_{i=1}^{n} L_{i}$,
		then the associated metric graph $G =(X,v)$ has $\Psi_{G}$ injective.
	\end{prop}

Intuitively, Proposition \ref{genericinj} is a generalization of the result that random metric graphs have trivial automorphisms groups. However, our result is strictly stronger, as it is possible for $\Psi_{G}$ to fail to be injective even if $\operatorname{Aut}(G)$ is trivial, as illustrated in Figure \ref{noinjnoautofigure}.

	\begin{center}
		\begin{figure}[htb]
			\labellist
			\small\hair 2pt
			\pinlabel $p$ at 80 55
			\pinlabel $6$ at 59 22
			\pinlabel $1.1$ at 29 101
			\pinlabel $0.9$ at 10 105
			\pinlabel $1$ at 23 132
			\pinlabel $5$ at 65 114
			\pinlabel $10$ at 167 86
			\pinlabel $q$ at 260 55
			\pinlabel $5$ at 290 25
			\pinlabel $1$ at 323 2
			\pinlabel $0.9$ at 327 44
			\pinlabel $1.1$ at 315 100
			\pinlabel $5$ at 293 122
			\pinlabel $1$ at 320 133
			\endlabellist

			\includegraphics{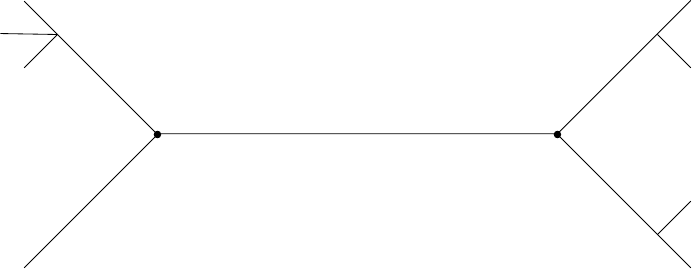}
			\caption{The basepoints $p$ and $q$ produce identical barcodes, despite the graph having a trivial automorphism group.}
						\label{noinjnoautofigure}
		\end{figure}
	\end{center}

    Let us now consider why the above result cannot be stated for \emph{any} graph topology. Any metric graph with a topological self-loop admits an automorphism that flips the loop. Meanwhile, certain combinatorial graphs with fewer than three vertices of valence not equal to two, such as a pair of vertices connected by multiple edges, admit automorphisms flipping those vertices regardless of the metric structure chosen. These automorphisms are obstructions to injectivity of $\Psi_{G}$, so we cannot apply Theorem \ref{injinv} directly.\\
    
    Theorem \ref{btgeninj2} states that, even for these exceptional topologies where injectivity of $\Psi_{G}$ cannot be ensured, the collection of barcodes $\mathcal{BT}(G)$ contains enough information to reconstruct $G$. As before, this requires that we remove from consideration, for each graph topology $X = (V,E)$, finitely many hyperplanes in $\mathbb{R}_{>0}^{E}$. The proof of this proposition is carried out in section \ref{sec: selfloops}, and consists of identifying the ways in $\Psi_{G}$ fails to be injective for these particular graph topologies, and showing they are simple enough to allow $\mathcal{BT}$ to remain injective. Note, however, that the same is not true for the $\mathcal{BMT}$, as it is possible for the failure of injectivity of $\Psi_{G}$ to obscure the original measure on $G$, as demonstrated in the following counterexample.

\begin{counterexample}
Let $(G,\mu)$ be a metric-measure graph homeomorphic to an interval, and let $f: G \to G$ be the isometry exchanging its leaves. Let $S \subset G$ be a measurable subset for which $S \cap f(S) = \emptyset$. Then $(\Psi_{G})_{\ast}(\mu)(\Psi_{G}(S)) = \mu(S) + \mu(f(S))$, as $S$ and $f(S)$ are mapped to the same subset of barcode space by $\Psi_{G}$. The resulting measure on barcode space is thus obtained by symmetrizing $\mu$ with respect to the automorphism $f$, and since there are many distinct measures with the same such symmetrization, this procedure cannot be reversed and $\mathcal{BMT}$ will not be injective.
\end{counterexample}

Detailed proofs are contained in the following sections. The sections are ordered by logical implication and not the order in which their results appear above.

\begin{itemize}
	\item Section \ref{reebproofs} proves Lemma \ref{recovergraph} and Corollary \ref{pointedinj}.
    \item Section \ref{applocalism} proves Proposition \ref{localisom}, that $\Psi_G$ is a local isometry when $G$ is not a circle.
    \item Section \ref{appinjiv} proves Theorem \ref{injinv}, that $G$ can be recovered from $\mathcal{BT}(G)$ when $\Psi_G$ is injective.
    \item Section \ref{sec: bamboographs} proves proposition \ref{densebamboo}, showing density of $\Psi_{G}$-injective graphs.
    \item Section \ref{sec: geninj} proves Theorem \ref{genericinj}, that $\Psi_G$ is generically injective for a large class of measures on $\mathbf{MGraphs}$.
    \item Section \ref{sec: selfloops} discusses the case of topological self-loops and two or fewer vertices of valence not equal to two.
    \item Section \ref{appbtlocalinj} proves Theorem \ref{btlocalinj}, that $\mathcal{BT}$ is locally injective.
\end{itemize}

\section{Proofs of Lemma \ref{recovergraph} and Corollary \ref{pointedinj}}
\label{reebproofs}
We will need the following lemma.
	\begin{lemma}
	\label{epsilonlemma}
	Let $(G,x)$ be a pointed, connected, and compact metric graph, and fix $y \in G$. There is a constant $\epsilon>0$, depending on $x$ and $y$, with the property that for any point $z$ with $d(y,z) < \epsilon$, we have $d(y,z) = |d(x,y) - d(x,z)|$. In other words, the distance between $y$ and $z$ can be written as the difference of their distances to $x$.
	\end{lemma}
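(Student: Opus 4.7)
The plan is to exploit the piecewise-linear structure of the distance function $f_x = d(x,\cdot)$ on the metric graph $G$, which on each edge is built out of affine pieces of slope $\pm 1$. First, dispose of the trivial case $x = y$: here $d(x,y) = 0$, so $|d(x,y) - d(x,z)| = d(x,z) = d(y,z)$ for every $z$, and any $\epsilon > 0$ works. So we may assume $x \neq y$ throughout.

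The key structural observation is as follows. Let $e = [u,v]$ be any edge of $G$ with length $L_e$, parametrized by arc length $t \in [0, L_e]$, and suppose $x \notin e$. Then any path from $x$ to a point of $e$ must enter $e$ through one of its endpoints, so
\[
f_x(t) \;=\; \min\bigl(d(x,u) + t,\; d(x,v) + L_e - t\bigr).
\]
This is a minimum of two affine functions with opposite unit slopes, hence piecewise linear with slopes $\pm 1$ and at most one breakpoint in the interior of $e$. The same analysis applies to the two half-edges meeting at $y$ if $y$ lies in the interior of an edge. Consequently, on each edge incident to $y$ (or on the edge containing $y$), the function $f_x$ is affine with slope $\pm 1$ in a small enough one-sided neighborhood of $y$.

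Now choose $\epsilon = \epsilon(x,y) > 0$ so small that: (i) the open ball $B(y,\epsilon)$ is contained in the union of edges incident to $y$ (possible since $y$ has locally finite valence in a metric graph); (ii) $\epsilon < d(x,y)$, so in particular $x \notin B(y,\epsilon)$; and (iii) for every edge $e$ incident to $y$ with far endpoint $v_e$, the ball $B(y,\epsilon)$ does not contain the potential breakpoint of $f_x$ on $e$. When $d(x,y) < d(x,v_e) + L_e$, this breakpoint sits at parameter $\tfrac{1}{2}\bigl(d(x,v_e) + L_e - d(x,y)\bigr) > 0$ away from $y$; when $d(x,y) = d(x,v_e) + L_e$, the two affine pieces agree on all of $e$ with slope $-1$ and there is nothing to avoid. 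Such an $\epsilon$ exists since only finitely many edges meet at $y$.

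For any $z \in B(y,\epsilon)$, the point $z$ lies on some edge incident to $y$, at arc-length distance $t = d(y,z)$ from $y$, and by construction $f_x$ is affine with slope $\pm 1$ on the segment $[y,z]$. Hence $d(x,z) = d(x,y) \pm d(y,z)$, and therefore $d(y,z) = |d(x,y) - d(x,z)|$, which is exactly the conclusion. The main (mild) subtlety is in step (iii): one must confirm that the breakpoint of $f_x$ on each incident edge is a positive distance from $y$, and that this holds \emph{uniformly} over the finitely many edges touching $y$ — both points follow from the explicit min formula above together with the triangle inequality $d(x,y) \le d(x,v_e) + L_e$.
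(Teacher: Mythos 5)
Your overall strategy --- analyze $d(x,\cdot)$ direction-by-direction out of $y$ and show it is affine with slope $\pm 1$ on a short initial segment of each of the finitely many incident edges --- is the same as the paper's, which phrases the dichotomy qualitatively ("this direction is/is not the initial segment of a geodesic from $y$ to $x$") rather than via slopes of a piecewise-linear function. Your version is more explicit and quantitative, which is fine, and the handling of $x=y$ and of the uniform choice of $\epsilon$ over the finitely many incident edges is correct.

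There is, however, one case your explicit formula does not cover: when $x$ lies in the interior of an edge incident to $y$ (including the edge containing $y$ itself, if $y$ is not a vertex). Your structural observation is proved under the hypothesis $x \notin e$, but step (iii) then applies the two-term formula $f_x(t) = \min\bigl(d(x,y)+t,\; d(x,v_e)+L_e -t\bigr)$ to \emph{every} edge incident to $y$; condition (ii) ($\epsilon < d(x,y)$) only keeps $x$ out of the ball $B(y,\epsilon)$, not out of the edge. Concretely, if $G$ is a single edge $[y,v]$ of length $10$ and $x$ sits at distance $3$ from $y$ on it, the formula predicts $f_x(t) = 3 + t$ near $y$, whereas in fact $f_x(t) = 3 - t$, and the predicted breakpoint location is likewise wrong. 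The lemma's conclusion is unaffected (the true $f_x$ is still affine of slope $\pm 1$ near $y$), and the fix is one sentence: first subdivide $G$ at $x$, making $x$ a valence-two vertex --- this changes nothing metrically and guarantees $x$ is never interior to an edge, after which your formula (allowing $d(x,u)=0$) applies to every edge. Alternatively, note the more general fact that $f_x$ restricted to any edge is a minimum of finitely many affine functions of slope $\pm 1$, hence has only finitely many breakpoints to avoid near $y$.
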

	\begin{proof}
	When $y=z$, the result holds trivially, so we now assume $y \neq z $.Consider the depiction of a metric graph in Figure \ref{fig:graphepsilon}. For $\epsilon$ small enough, the finiteness of our graph and the condition $d(y,z) < \epsilon$ forces $z$ to sit on an edge (or half-edge, if $y$ is not a vertex) adjacent to $y$. For each such edge, there is either a geodesic from $y$ to $x$ that moves along that edge, or there is not. In the former situation, moving along that edge (which has some positive length) will bring one closer to $x$, and the geodesic from $y$ to $x$ is exactly the concatenation of the geodesic from $y$ to $z$ and the geodesic from $z$ to $x$, giving $d(y,z) = |d(x,y) - d(x,z)|$. In the latter situation, for $\epsilon$ sufficiently small, the geodesic from $z$ to $x$ must pass through $y$, and we obtain the same equality. Lastly, since our graph is finite, there are only finitely many edges adjacent to $y$, and some sufficiently small $\epsilon$ works for points $z$ sitting on every edge adjacent to $y$.s
		\end{proof}
	\begin{center}
	\begin{figure}[htb]
		\labellist	
		\small\hair 2pt
		\pinlabel $x$ at 33 26
		\pinlabel $z_3$ at 113 26
		\pinlabel $y$ at 110 68
		\pinlabel $\epsilon$ at 102 80
		\pinlabel $z_1$ at 112 88
			\pinlabel $z_2$ at 91 59
		\endlabellist
	    \centering
	\includegraphics[scale=1]{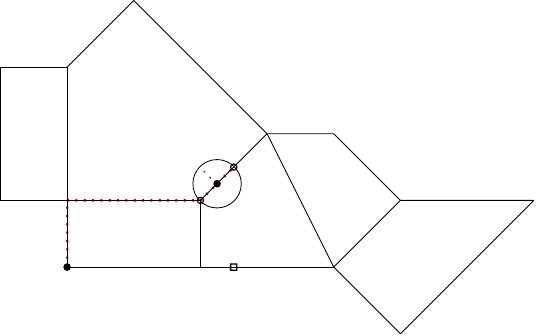}
	\caption{A metric graph with basepoint $x$ and fixed $y$. For $\epsilon$ small enough, $d(y,z) < \epsilon$ implies that $z$ sits on an edge adjacent to $y$, as with $z_1$ or $z_2$. The geodesic from $x$ to $y$ passes through $z_2$, implying the claimed equality for $z_2$. Similarly, the geodesic from $x$ to $z_1$ passes through $y$, and so we can also deduce the equality for $z_1$. Neither statement is true when comparing the geodesics from $x$ to $y$ and $x$ to $z_3$ respectively, and indeed $d(y,z_3) \neq |d(x,y) - d(x,z_3)|$.}
	    \label{fig:graphepsilon}
	\end{figure}
	\end{center}

\begin{proof}[Proof of Lemma \ref{recovergraph}]

	We will show that, given any path $\gamma$, the length of this path in both $d_{G}$ and $d_{f}$ is the same. Since both $d_{G}$ and $d_{f}$ are intrinsic metrics, this will imply that they are equal. Given such a path $\gamma: I \to G$, let $I' \subset I = [0,1]$ be the largest initial interval such that $d_{G}$ and $d_{f}$ agree on $\gamma(I')$. We claim that $I' = I$. Suppose not, and let $s = \sup I'$. The continuity of our metrics implies that $s \in I'$, and hence $I' = [0,s]$ for $s<1$.\\
	
	Now, applying Lemma \ref{epsilonlemma} to $y = \gamma(s)$, and making use of the continuity of the map $\gamma$, we find that we can extend $I'$ to a strictly larger interval $I''$, where $d_{G}$ and $d_{f}$ agree on $\gamma(I'')$. This contradicts our choice of $I'$, so we conclude that $I' = I$ and $s=1$. 
	\end{proof}
	
	\begin{proof}[Proof of Corollary \ref{pointedinj}]
	 Let $(G,x)$ and $(G',x')$ be a pair of pointed metric spaces, with associated Reeb graphs $R_f =\Phi(G,x)$ and $R_g\Phi(G',x')$. If $d_{FD}(R_f,R_g) = 0$ then by Lemma \ref{FDzero}, there is a Reeb-graph isometry $\phi$ from $R_f$ to $R_g$ (in particular, a homeomorphism) preserving their respective height functions. Thus for any continuous path $\gamma \subset G$, the integral $\int_{\gamma} |df|$ is equal to $\int_{\phi(\gamma)} |dg|$. Since these integrals correspond to the lengths of these paths in $d_{G}$ and $d_{G'}$ respectively, and since $d_{G}$ and $d_{G'}$ are both length metrics, we see that $\phi$ induces an isometry between $(G,d_G)$ and $(G',d_{G'})$. This isometry must preserve basepoints since $x$ and $x'$ are the unique points at ``height" zero, and hence $\phi(x) = x'$.
	\end{proof}

\section{Proof of Proposition \ref{localisom}: $\Psi_{G}$ is a local isometry}
\label{applocalism}

The proof of Proposition \ref{localisom} relies on the following lemma.

\begin{lemma}
\label{birthdeathvary}
Let $G$ be any compact metric graph that is not a circle. Then, for every choice of basepoint $p \in G$, there exists a neighborhood radius $\epsilon(p) > 0$ and a non-diagonal interval $I = (a,b)$ in the extended persistence barcode $\Psi_{G}(p)$ such that, for any $q$ with $d = d(p,q) < \epsilon(p)$, there is a corresponding interval $I' \in \Psi_{G}(q)$ with $d_{\infty}(I,I') = d$ and such that $\operatorname{pers}(I') \geq \operatorname{pers}(I)/2$.
\end{lemma}

Assuming this lemma for the moment, let us prove the proposition.

\begin{proof}[Proof of Proposition \ref{localisom}]
		Fix a basepoint $p \in G$, and take a nearby basepoint $q$, writing $d = d(p,q)$. Theorem \ref{perstable} implies that $d_{B}(\Psi_{G}(p),\Psi_{G}(q)) \leq d$.\\
	
	For the reverse inequality, let $\delta > 0$ be the minimum value among both the distances between non-diagonal intervals in $\Psi_{G}(p)$ (considered without multiplicity, and comparing intervals in the same dimension only) as well as the set of values $\{\operatorname{pers}(I)\}$ for those same non-diagonal intervals. Take $q$ to be a point close enough to $p$ so that it satisfies the claim of Lemma \ref{birthdeathvary}, and let us further assume that $d(p,q) < \delta /2$ if it is not already.\\
	
	Let $I \in \Psi_{G}(p)$ and $I' \in \Psi_{G}(q)$ be as given in Lemma \ref{birthdeathvary}. We know that in an optimal matching, $I'$ must be paired with a (potentially diagonal) interval $\hat{I} \in \Psi_{G}(p)$ with $d_{\infty}(\hat{I},I') \leq d$. We claim that $\hat{I} = I$, so that $d_{\infty}(I',\hat{I}) = d$, and hence $d_{B}(\Psi_{G}(p),\Psi_{G}(q)) \geq d$.\\
	
	The condition $d_{\infty}(I,I') = d < \delta /2$ implies that $d_{\infty}(\hat{I},I') \geq \delta /2 > d$ for all non-diagonal $\hat{I} \in \Psi_{G}(p)$ distinct from $I$. Moreover, the condition $\operatorname{pers}(I') \geq \operatorname{pers}(I)/2$, taken together with the fact that $\operatorname{pers}(I) > \delta$, implies that the distance from $I'$ to the diagonal is at least $\delta/2 > d$ as well. Thus, the only possibility for $\hat{I}$ is $I$ itself.
	\end{proof}

We now turn to the proof of Lemma \ref{birthdeathvary}. We will make use of the following two results from \cite{dey2015comparing}. These results concern the behavior of a point in the persistence diagram as a basepoint is varied along the arc-length parametrization $s:[0,\operatorname{Len}((\sigma))] \to G$ of an edge $\sigma$. The notation $P_{s(t)}$ refers to the persistence diagram associated to the basepoint $s(t)$. The authors show that a point in the persistence diagram $P_{s(0)}$ can be tracked as one moves along $\sigma$, and that its birth time $b(t)$ and death time $d(t)$ evolve in a Lipschitz, controlled way:


\begin{prop}[\cite{dey2015comparing}, Proposition 14]
	\label{prop:deydeath}
	Fix a persistence point $(b(0), d(0)) \in P_{s(0)}$, and consider the corresponding death-time function
	$d : [0, \operatorname{Len}((\sigma))] \to \mathbb{R}$. Suppose that $d(t)>0$ whenever $s(t)$ is a point of valence less than three, i.e. $s(t)$ is a leaf or sits on the interior of an edge\footnote{This hypothesis is made clear in \S 5.1.1 of \cite{dey2015comparing}, as a means of simplifying the proofs of that section.}. Then $d(t)$ is piecewise linear with at most $O(n)$ pieces, and each linear piece has slope either $1$ or $-1$. This also implies that the function $d$ is $1$-Lipschitz.
\end{prop}

%

\begin{prop}[\cite{dey2015comparing}, Proposition 18]
		\label{prop:deybirth}
	For a fixed persistent point $(b(0), d(0)) \in P_{s(0)}$, the birth-time function
	$b : [0, \operatorname{Len}((\sigma))] \to \mathbb{R}$, tracking the birth time $b(0)$, is piecewise linear with at most $O(m)$ pieces, and each linear piece has slope either $1$,$-1$. or $0$. This also implies that the function $d$ is $1$-Lipschitz.
\end{prop}

\begin{proof}[Proof of Lemma \ref{birthdeathvary}]
To begin, suppose that the Reeb graph $\Phi_{G}(p)$ contains an upfork $u$ of valence at least three, with $d(u,p) > 0$, and let $I = (a,b)$ be the corresponding non-diagonal interval, so that $b>0$. By continuity of the death-time function, as the point $p$ is varied in a small neighborhood, the death time of this interval remains positive, so that the hypothesis of Proposition \ref{prop:deydeath} is satisfied. Propositions \ref{prop:deydeath} and \ref{prop:deybirth} then imply that, for $q$ sufficiently close to $p$, $|d-d'|$ is one of $\{\pm d_{G}(p,q)\}$, and $|b-b'|$ is one of $\{\pm d_{G}(p,q),0\}$. This implies that $d_{\infty}(I,I) = d_{G}(p,q)$. If we further restrict $d_{G}(p,q) < \frac{1}{2}|a-b| = \operatorname{pers}(I)$, we have
	\begin{align*}
|\operatorname{pers}(I') - \operatorname{pers}(I)| & = \frac{1}{2}||a'-b'| - |a-b||\\
& \leq \frac{1}{2}|a-a'| + \frac{1}{2}|b-b'| \\
&  \leq \frac{1}{8}|a-b| + \frac{1}{8}|a-b|\\
& = \frac{1}{4}|a-b|\\
& = \frac{1}{2}\operatorname{pers}(I)
\end{align*}
verifying the condition that $\operatorname{pers}(I') \geq \operatorname{pers}(I)/2$.\\

Suppose now that the Reeb Graph $\Phi_{G}(p)$ contains no upforks distinct from $p$ itself, so that there are no nonzero death times in $\Psi_{G}(p)$, and we cannot directly make use of Proposition \ref{prop:deydeath}. We split our analysis into three cases: (1) $G$ contains no vertices of valence at least three, (2) $p$ itself has valence at least three, or (3) $G$ contains vertices of valence at least three, but $p$ has valence at most two. We will analyze each case in turn.\\

Case (1): If $G$ contains no vertices of valence at least three, and is not a circle, it must be (up to removal of valence-two vertices) a line segment. For $G$ a line segment, the zero-dimensional part of $\Psi_{G}(p)$ consists of the interval $I = (a,0)$, where $a$ is the distance from $p$ to the furthest leaf vertex. The value $a$ changes linearly with slope $1$ or $-1$, switching slope at the center of the interval. Thus, although Proposition \ref{prop:deydeath} does not apply in this case (since the death time is constant), we have ruled out the slope $0$ possibility in Proposition \ref{prop:deybirth}. Hence, as we vary the basepoint $p$ in a small neighborhood, we still obtain $d_{G}(I,I') = d_{G}(p,q)$ and $\operatorname{pers}(I') \geq \operatorname{pers}(I)/2$.\\


Case (2): We focus on one-dimensional homology. As $p$ has valence at least three, $\Psi_{G}(p)$ contains at least two off-diagonal intervals with death time zero, $I_1 = (a_1,0)$ and $I_{2} = (a_2,0)$\footnote{See Lemma \ref{lem:detectvalence} for a detailed proof of this intuitive claim.}. Let $\delta$ be the length of the shortest edge adjacent to $p$, and consider a basepoint $q\in G$ with $d_{G}(p,q) < \delta/2$, as in Figure \ref{fig:localisom_case2}. Let $I_{1}'$ and $I_{2}'$ be the corresponding intervals for $I_{1}$ and $I_{2}$ in $\Phi_{G}(q)$, as in \cite{dey2015comparing}. Supposing that the death time $b_1'$ of $I_{1}' = (a_1',b_1')$ is not equal to zero, it corresponds to the distance from $q$ to an upfork $u$. Since the birth and death times of intervals vary with Lipschitz constant one as the basepoint is moved, $d_{B}(I_{1},I_{1}') \leq d_{G}(p,q)$, and hence $b_1' < \delta /2$. This implies that $u = p$, as $q$ is at distance greater than $\delta /2$ from any other vertex of valence at least three, and so $b_{1}' = d_{G}(p,q)$. Hence $d_{B}(I_{1},I_{1}') \geq d_{G}(p,q)$, from which we can deduce that $d_{B}(I_{1},I_{1}') =d_{G}(p,q)$. Taking $d_{G}(p,q) < \frac{1}{2}|b_1  - a_1|$, we can further ensure that $\operatorname{pers}(I_{1}') \geq \operatorname{pers}(I_{1})/2$.\\

To justify the supposition that $I_{1}'$ has nonzero death time, observe that for $q$ sufficiently close to $p$, $q$ has valence two and hence at most one one-dimensional persistent point with death time zero. Thus it is not possible for both $I_{1}'$ and $I_{2}'$ to have death time zero, and up to relabeling we can choose $I_{1}'$ to have strictly positive death time. \\

Case (3): Since $p$ has valence less than three, sufficiently small neighborhoods of $p$ are homeomorphic to intervals. Let $r$ be the largest value such that the open ball of radius $r$ at $p$ is homeomorphic to an interval (this is finite, as $G$ contains a vertex of valence at least three, and hence cannot be an interval). Let $x$ and $y$ be the endpoints of this interval of radius $r$. As the homeomorphism type of our neighborhood changes at $r$, we either find that $x=y$, and the neighborhood becomes a circle, or $x\neq y$, and one of the endpoints branches out, producing an upfork of valence at least three in $\Psi_{G}(p)$. By hypothesis, we can rule out the second possibility. Now, $x=y$ must be antipodal to $p$ on our circle of circumference $2r$, and, since $G$ is not homeomorphic to a circle, $x=y$ must have valence at least three. We know that $x=y$ cannot have valence of four or more, as this would make it an upfork. Thus, $x=y$ has valence exactly three. Now, any other vertex $v \in G$ connected to $p$ through $x=y$ must have valence at most two, as it would otherwise be an upfork. Hence we can conclude that $G$ looks (up to deletion of valence-two vertices) as in Figure \ref{fig:localisom_case3}. In this case, we turn to zero-dimensional homology; the zero-dimensional part of $\Psi_{G}(p)$ contains a single interval $I = (0,d(p,v))$, corresponding to the distance from $p$ to the leaf vertex $v$. It is easily seen that for $q$ sufficiently close to $p$, the zero-dimensional part of $\Psi_{G}(q)$ consists of $I' = (0,d(q,v)) = (0,d(p,v) - d(p,q))$. Thus $d_{\infty}(I,I') = d(p,q)$. Lastly, if $d(p,q) < d(p,v)/2$, then $\operatorname{pers}(I') \geq \operatorname{pers}(I)/2$.

\end{proof}	

	\begin{center}
	\begin{figure}[htb]
					\labellist
		\small\hair 2pt
		\pinlabel $p$ at 73 87
		\pinlabel $q$ at 100 97
		\pinlabel $\delta$ at 65 40
		\endlabellist
		\includegraphics[scale=0.8]{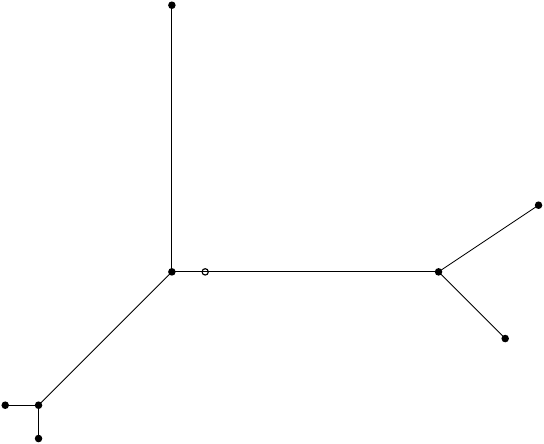}
		\caption{The vertex $p$ has valence at least three, and $q$ sits on an edge adjacent to $p$, strictly closer to $p$ than any other vertex of valence at least three.}
		
		\label{fig:localisom_case2}
	\end{figure}
\end{center}

	\begin{center}
	\begin{figure}[htb]
		\labellist
		\small\hair 2pt
		\pinlabel $p$ at 68 -5
		\pinlabel $q$ at 101 4
		\pinlabel $x=y$ at 65 115
		\pinlabel $v$ at 65 205
		\endlabellist
		\includegraphics[scale=0.8]{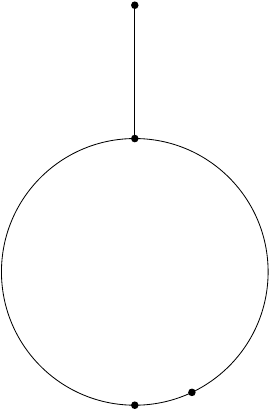}
		\caption{The graph $G$ looks as above, with $p$ antipodal to the vertex $v$. For $q$ close to $p$, the furthest point on $G$ is the leaf vertex $v$.}
		
		\label{fig:localisom_case3}
	\end{figure}
\end{center}

\section{Proof of Theorem \ref{injinv}: Recovering $G$ when $\Psi_{G}$ is injective}
 \label{appinjiv}

	Let us first consider the map $\mathcal{BT}$. Given $G \in \mathbf{MGraphs}$ with $\Psi_{G}$ injective, endow $\mathcal{BT}(G)$ with the intrinsic metric $\hat{d}_{B}$ induced by the barcode metric. As $\Psi_{G}$ is an injective continuous map from a compact space to a Hausdorff space (recall that $\mathcal{BT}(G)$ in Hausdorff for the metric $d_{B}$ by Lemma \ref{bottleneckmetric}), it is a homeomorphism on to its image. For the remainder of the proof, we identify $G$ with $\mathcal{BT}(G)$ via the map $\mathcal{BT}$, considering $d_{G}, d_{B}$, and $\hat{d}_{B}$ as metrics on $G$. Moreover, $\Psi_{G}$ being a homeomorphism implies that $d_{B}$ and $d_{G}$ induce the same topology on $G$, so the class of $d_{B}$-continuous paths is the same as the class of $d_{G}$-continuous paths.\\
	
	
	
	
	Let $\gamma: I \to G$ be a $d_{G}$-continuous path, and $P = \{0 = t_0, \cdots, t_n = 1\}$ a partition. Let $\ell_{G,P}(\gamma)$ and $\ell_{B,P}(\gamma)$ denote the lengths of $\gamma$ in $d_{G}$ and $d_{B}$ with respect to this partition. We claim that $P$ admits a refinement $P \subseteq P' = \{0 = r_0, \cdots, r_{m} = 1\}$ for which $\ell_{G,P'}(\gamma) = \ell_{B,P'}(\gamma)$. As the length of a path in a metric space is the supremum of the lengths of its partitions, considered over the set of all possible partitions, this implies that $\gamma$ has the same length in both $d_{G}$ and $d_{B}$. Since $\hat{d}_{B}$ is the intrinsic metric defined using $d_{G}$-continuous paths and $d_{G}$ is an intrinsic metric, this will imply that $\hat{d}_{B}(p,q) = d_{G}(p,q)$ for all $p,q \in G$.\\
	
	For each time $t \in I$ and corresponding point $\gamma(t) \in G$, there is a constant $\epsilon_{t}$ witnessing the validity of Proposition \ref{localisom} for $\gamma(t)$. Let $U_{t}$ be the open $d_{G}$-neighborhood of $\gamma(t)$ of radius $\epsilon_{t}$. Since $\gamma: I \to G$ is continuous, there is a constant $\delta_{t}$ such that $\gamma((t-\delta_t, t + \delta_t)) \subseteq U_t$. Let $V_t = (t-\delta_t/2, t + \delta_t/2)$. The sets $V_t$ form an open cover of $I$, and hence by compactness a finite subcover exists, corresponding to a collection of times $\Omega = \{r_{1}, \cdots, r_{k}\}$. Let us augment $\Omega$ with the times in $P$ to produce our refinement $P'$. Note that if $r,r' \in P'$ are two consecutive times, the triangle inequality implies $r' - r < \max \{\delta_r, \delta_r'\}$, so that either $\gamma(r') \in \gamma((r-\delta_r, r + \delta_r)) \subseteq U_r$ or $\gamma(r) \in \gamma((r'-\delta_r', r' + \delta_r')) \subseteq U_{r'}$. Direct application of Proposition \ref{localisom} then yields
	
\[ \ell_{G,P}(\gamma) = \sum_{i=0}^{m-1} d_{G}(\gamma(r_i),\gamma(r_{i+1})) = \sum_{i=0}^{m-1} d_{B}(\gamma(r_i),\gamma(r({i+1})) = \ell_{B,P}(\gamma)\]
	
	completing the proof that $d_{G} = \hat{d}_{B}$. For the map $\mathcal{BMT}$, we can obtain $\mathcal{BT}(G)$ by taking the support of the pushforward measure, since we are by assumption working with measures of full support. As we have just seen, we can then obtain the underlying metric graph $G$. The measure of a Borel subset $S \subseteq G$ is then equal to $(\Psi_{G})_{\ast}(\mu)(\Psi_{G}(S))$.

\section{Proposition \ref{densebamboo}: Density of $\Psi_{G}$-injective Graphs}
\label{sec: bamboographs}

Given a compact metric graph $G$ with vertex set $V$, we define a \emph{cactus approximation} of $G$ as follows. Let $S \subset G$ be a finite set of points containing $V$. Define $\omega(S) = \max_{p,p' \in S} d_{G}(s,s')$,  $\delta(S) = \min_{p,p' \in S} d_{G}(s,s')$, and $\delta(V) = \min_{v,v' \in V} d_{G}(v,v')$. Our intention is to attach small edges to $G$ along the points in $S$ (but not the leaf vertices). To that end, let $\alpha : S \to \mathbb{R}_{+}$ be any function that is zero on the leaves of $G$. We will produce a new graph by attaching to each point $p \in S$ an interval $I_p$ of length $\alpha(p)$. The resulting graph $H$ will be denoted by $\operatorname{Cactus}_{S,\alpha}(G)$. The idea is illustrated in Figure \ref{fig:cactus}. The points in $S$ are drawn in red, and to each one we have attached a new interval, which we will call a \emph{thorn} in the remainder of the proof. Note that the thorns attached to leaves in $G$ have length zero, and hence do not add anything to the graph. Additionally, since $S$ is finite, $H$ is composed of finitely many edges and vertices, and hence is still an element of $\mathbf{MGraphs}$.
	\begin{figure}[htb]
				\labellist
		\small\hair 2pt
		\pinlabel $p$ at 91 334
		\pinlabel $I_p$ at 77 297
		\endlabellist
	    \centering
	\includegraphics[scale=0.6]{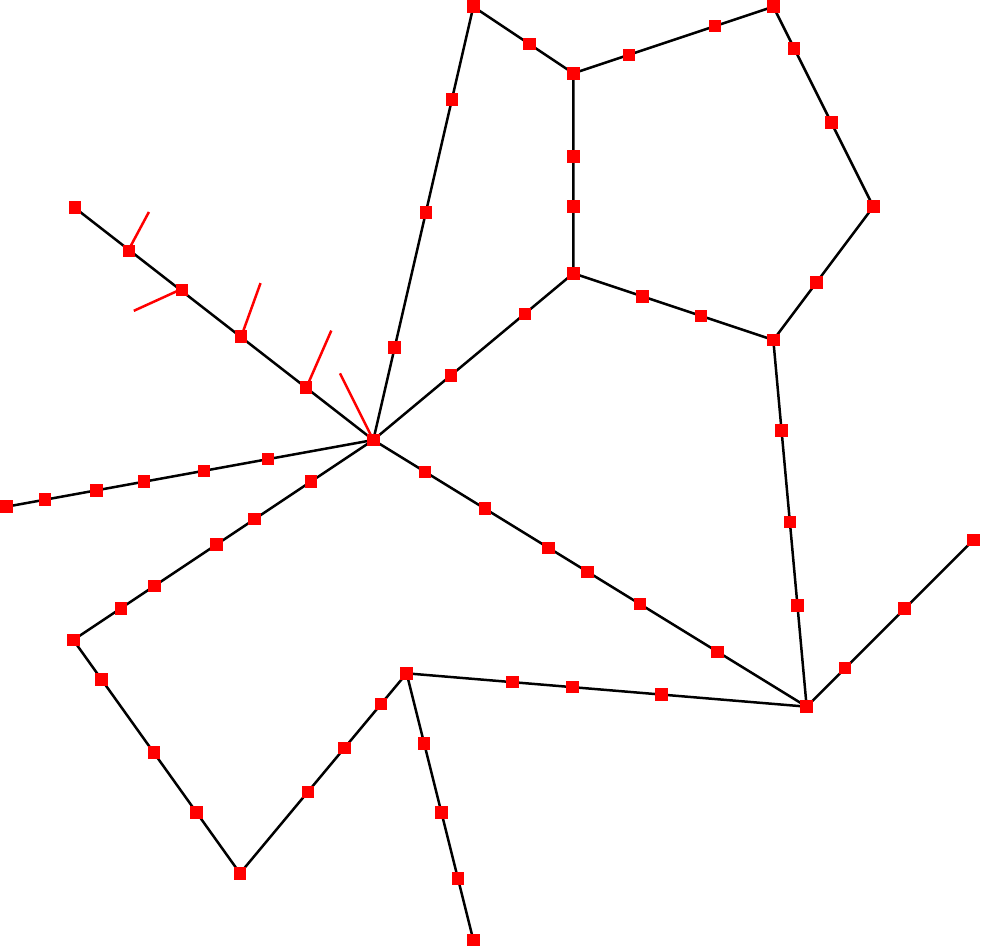}
	    \caption{The cactus approximation of $G$.}
	    \label{fig:cactus}
	\end{figure}

	It is clear that if $\alpha(p) \leq \epsilon$ for all $p \in S$, the Gromov-Hausdorff distance between $G$ and any $\operatorname{Cactus}_{S,\alpha_{p}}(G)$ is at most $\epsilon$. The proof of Proposition \ref{densebamboo} follows from Lemma \ref{bambooinjective}.
	
	\begin{definition}
	Let $G$ be a compact graph. The \emph{injectivity radius} of $G$, denoted $\operatorname{inj}(G)$, is half the length of the shortest closed curve on $G$ (i.e. half the length of the \emph{systole} of $G$).
	\end{definition}
	
	\begin{lemma}
	\label{bambooinjective}
	Let $G$ be any compact metric graph. Let $S \subset G$ be a finite subset and take $0 < \alpha < \delta(S) /2$. Suppose that $2\omega(S) + \alpha < \operatorname{inj}(G)$, $\omega(S) < \delta(V)$, and that $\alpha : S \to \mathbb{R}_{+}$ is injective and nonzero on $S' = S \setminus \{ \mbox{leaves in $G$} \}$. Then if $H = \operatorname{Cactus}_{S,\alpha}(G)$, $\Psi_{H}$ is injective.
	\end{lemma}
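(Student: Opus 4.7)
The plan is to exhibit, for each thorn, a distinguishable bar in $\Psi_H(p)$ whose endpoints directly encode the distance from the basepoint $p$ to the thorn's base, and then to argue that the resulting collection of distances is enough to recover $p$.

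First, I would prove a structural claim: for any basepoint $p \in H$ and any $s_i \in S'$ with $p \notin I_{s_i}$, the thorn $I_{s_i}$ contributes a relative $H_1$ bar $B_i \in \Psi_H(p)$ with endpoints $(d_H(p, s_i) + \alpha(s_i),\, d_H(p, s_i))$. Indeed, in the Reeb graph $\Phi_H(p)$ the thorn appears as a pendant branch with leaf $t_i$ at height $d_H(p, s_i) + \alpha(s_i)$ and base $s_i$ at height $d_H(p, s_i)$. The extra ``up'' direction introduced at $s_i$ by the thorn, together with at least one ascending direction in $G$, makes $s_i$ an upfork, and by the dictionary of Section \ref{sec:reebdictionary} this upfork pairs with the local maximum $t_i$. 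The bound $\alpha < \delta(S)/2$ is used via the elder rule to ensure that $t_i$, rather than some other local max from a nearby thorn or branch, is the maximum with which $s_i$ is matched. The case $p \in I_{s_j}$ is handled by an analogous modification along the single thorn $I_{s_j}$, with the leaf now playing the role of an extra basepoint-adjacent maximum.

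Second, I would use the lemma's hypotheses to isolate the thorn bars within $\Psi_H(p)$ by their lengths. Any non-thorn bar arises either from an essential cycle of $G$ (and hence has length at least $\operatorname{inj}(G)$) or from an upfork at an original vertex of $G$ of valence at least three (with length bounded below by a geometric quantity controlled by $\delta(V)$ and $\operatorname{inj}(G)$, since such a bar must span a topological cycle or a nontrivial branch of $G$). The hypotheses $2\omega(S) + \alpha < \operatorname{inj}(G)$ and $\omega(S) < \delta(V)$, together with $\alpha < \delta(S)/2$, then force every thorn bar to have length strictly smaller than every non-thorn bar, so the thorn bars are characterized as the short bars of $\Psi_H(p)$. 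Injectivity of $\alpha$ on $S'$ labels the short bars canonically by $S'$ via the length map $B_i \mapsto \alpha(s_i)$, and reading the death time of each $B_i$ recovers $d_H(p, s_i)$.

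Finally, I would argue that the distance profile $(d_H(p, s_i))_{s_i \in S'}$ determines $p$ uniquely. If $p, p' \in H$ share a distance profile then any isometry of $H$ relating them must fix each labeled $s_i$; the injective $\alpha$-labeling together with the density-type conditions on $S$ rule out any nontrivial such isometry, forcing $p = p'$. The main obstacle I anticipate is the second step: verifying uniformly, over all basepoints $p \in H$, that every non-thorn bar has length strictly larger than $\max_i \alpha(s_i)$. This requires a case analysis on the geometric origin of each bar (essential cycle vs.\ relative feature from a leaf of $G$ vs.\ relative feature from a valence-$\geq 3$ vertex of $G$), and it is where the injectivity-radius and vertex-separation hypotheses are really used.
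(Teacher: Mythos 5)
Your overall strategy---read off each thorn as a short bar whose endpoints encode the distance from the basepoint to the thorn's base, then locate the basepoint from those distances---is the right general idea and is close in spirit to the paper's argument. However, your first structural claim has a genuine gap: it is not true that every thorn $I_{s_i}$ with $p \notin I_{s_i}$ contributes a bar with endpoints $\bigl(d_H(p,s_i)+\alpha(s_i),\, d_H(p,s_i)\bigr)$. The claim fails whenever $s_i$ lies at or near a local maximum of $d_G(p,\cdot)$ on $G$, i.e.\ near the point antipodal to $p$ on a cycle. If $s_i$ is exactly antipodal, every $G$-direction at $s_i$ descends toward $p$, so $s_i$ is not an upfork of $\Phi_H(p)$ and the dictionary pairing you invoke does not apply: for $G$ a circle of circumference $2L$ with a thorn attached at the antipode of $p$, the diagram consists of the $H_0$ bar $(0,L+\alpha)$ and the extended $H_1$ bar $(L,0)$, and no bar $(L+\alpha,L)$ appears at all. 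Even when $s_i$ is merely near the antipode, so that it is an upfork, the elder rule pairs $s_i$ with the \emph{smaller} of the two maxima of the merging superlevel components, and the ascending $G$-branch at $s_i$ may top out below $d_H(p,s_i)+\alpha(s_i)$; the bound $\alpha<\delta(S)/2$ only controls competition from nearby thorns and branches, not from the global cycle structure. A related problem afflicts your second step: since the diagrams carry no ordinary/relative/extended labels, you must separate thorn bars from cycle bars by length alone, and extended $H_1$ bars need not be long (their length is the gap between half a cycle length and the height of the paired essential upfork, which can be made arbitrarily small, e.g.\ in a theta graph). So the uniform length separation you flag as ``the main obstacle'' is not merely technical---it fails.

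The paper's proof sidesteps both problems by never consulting distant thorns: it only interprets bars whose birth times are smaller than $\omega(S)+\alpha$ or $2\omega(S)+\alpha$, both below $\operatorname{inj}(G)$, so that all the relevant geometry lives in a ball around the basepoint containing no cycle of $G$ and hence isometric to a tree. Inside such a ball every thorn base is a genuine upfork, the nearby bars behave exactly as in your step one, and the basepoint is pinned down by its distances to the tips and bases of only the two (or few) nearest thorns, which are identified via the injectivity of $\alpha$ on $S'$; this also replaces your isometry argument in step three by a direct localization. Salvaging your global approach would require first detecting and discarding the basepoint-dependent set of ``antipodal'' thorns, which is essentially circular; localizing as in the paper is the cleaner fix.
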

	
	\begin{proof}
	
		We claim that the location of a point $x \in H$ can be determined from its barcode.\\
		
	\paragraph*{\bf Determining if a point $x$ lies on a thorn from $\Psi_{H}(x)$.}
	
 First of all, we can determine from $\Psi_{H}(x)$ whether or not $x$ sits on a thorn. If $x$ is on a thorn $I_p$, the distance from $x$ to the tip of this thorn is less than $\alpha$. The distance from $x$ to the tip of any other thorn is at least $\delta(S) > \alpha$. Moreover, since $\alpha < \operatorname{inj}(G)$, the ball around $x$ of radius $\alpha$ contains no loops in $G$. This implies that the smallest positive birth time of a one-dimensional interval in $\Psi_{H}(x)$ is the distance from $x$ to the tip of $I_p$, and the corresponding interval dies at $0$. The first smallest death time in $\Psi_{H}(x)$ is necessarily the distance from $x$ to the base of $I_p$, which is an upfork in $\Phi_{H}(x)$.\\
	
	Let $x \in H$ be a point not sitting on a thorn. Suppose first that $x \in H$ sits on an interior edge of $H$, i.e. neither boundary vertex of this edge is a leaf. As before, the $\alpha$-ball around $x$ contains no cycles in $G$, and moreover since $\alpha < \delta(S)$ it contains no leaf of $G$, so if $\Psi_{H}(x)$ contains an interval born before $\alpha$ it corresponds to the distance from $x$ to the tip of a thorn, and such a barcode dies at the distance from $x$ to the base of that thorn, which is nonzero. Suppose next that $x \in H$ sits a leaf edge. An additional possibility from the prior case is that there is an interval in $\Psi_{H}(x)$ born before $\alpha$ that corresponds to the distance from $x$ to a leaf in $G$. In that case the distance from $x$ to the other vertex on its edge is at least $\delta(S) - \alpha > 2\alpha - \alpha = \alpha$, and this is the first nonzero death time in $\Psi_{H}(x)$.\\
	
	Thus we have seen that the barcodes associated to points on thorns are distinct from those that are not on thorns, and hence we can identify whether a point $x \in H$ lies on a thorn from its barcode.\\
	
	\paragraph*{\bf Determining the location of a point $x$ from $\Psi_{H}(x)$.}

    When $x$ lies on a thorn $I_p$, we have seen that $\Psi_{H}(x)$ records its distance from the tip and base of that thorn. From this information, the length $\alpha(p)$ of the thorn can also be derived. Since $\alpha$ is injective on $S'$, we can identify which thorn this is, and where on it $x$ sits.\\
	
	Next, suppose that $x$ does not sit on a thorn. If $x$ lies on an interior edge, with endpoints $p_1, p_2 \in S$, we claim that we can deduce what these points $p_1$ and $p_2$ are from $\Psi_{H}(x)$, as well as the distance from these points to $x$. This uniquely identifies the point $x$, as $\omega(S) < \operatorname{inj}(G)$ implies that there is only one point equidistant between $p_1$ and $p_2$. To do this, observe that the ball of radius $\omega(S) + \alpha$ around $x$ contains the thorns on either end of the edge. Since $\omega(S) + \alpha < \operatorname{inj}(G)$, this ball is a tree and does not contain any loops in $G$. Thus the intervals in $\Psi_{H}(x)$ born before $\omega(S) + \alpha$ correspond to the distances from $x$ to the tips of these thorns, and the death times of the corresponding intervals are the distances from $x$ to the base of those thorns. The barcode may also contain intervals corresponding to the distances from $x$ to leaves in $G$, but such intervals die at zero and cannot be confused with intervals coming from thorns. From this information, one can further deduce the length of those thorns, and hence the thorns themselves and their corresponding bases in $S$. This uniquely identifies the point $x \in H$.\\
	
	Lastly, suppose $X$ sits on a leaf edge, with non-leaf endpoint $p$. Let $q_1, \cdots, q_n$ be all the vertices in $S$ adjacent to $p$. Since $\delta(S) < \omega(S)$, it is not possible for two distinct leaves in $G$ to be adjacent to the same vertex in $S$, and hence all of the $q_{i}$ are in $S'$. As before, we claim that we can identify the points $p,q_1, \cdots, q_n$, as well as the distances from these points to $x$, from $\Psi_{H}(x)$. This will uniquely identify the point $x$. To do this, observe that the ball of radius $2\omega(S) + \alpha$ around $x$ contains the thorns $I_p, I_{q_1}, \cdots, I_{q_n}$. As before, we can identify which thorns these are from $\Psi_{H}(x)$, and subsequently deduce the distances from their bases to $x$.
	\end{proof}

\section{Proposition \ref{genericinj}: Generic Injectivity of $\Psi_{G}$ for $\mathbf{MGraphs}$}
\label{sec: geninj}

We will prove the result by contradiction. That, is, we will show that there is a positive integer $C(X)$, depending on the topological graph $X$, such that if $\Psi_{G}$ is \emph{not} injective then the edge lengths of $G$ satisfy a nontrivial\footnote{By nontrivial, we mean that the corresponding solution space is not all of $\mathbb{R}_{>0}^{E}$, but a proper hyperplane. Equivalently, the equation should not hold for every assignment of edge weights.} linear equation with integer coefficients in the range $[-C(X), C(X)]$. There are finitely many such linear equations, corresponding to finitely many hyperplanes, and avoiding these hyperplanes guarantees injectivity of $\Psi_{G}$.\\

To simplify the following proof, we will remove any vertices of degree two and merge the adjacent edges; this may introduce multiple edges betwen vertices, which is fine for our purposes. Note that if the edge lengths of $G$ do not satisfy any nontrivial linear equalities with coefficients in $\{-C(X), \cdots, C(X) \}$, then neither will the edge lengths of this new graph. Thus we assume from the beginning that every vertex in $G$ either has valence greater than or equal to three or has valence equal to one (a leaf vertex).\\

Before we proceed with the case analysis, we introduce some useful lemmas.
    
    \begin{lemma}
    \label{lem:detectvalence}
    Let $G$ be any compact metric graph, possibly with self-loops. For any basepoint $p \in G$, it is possible to deduce the valence of $p$ from $\Psi_{G}(p)$, where the valence of a non-vertex point is considered to be $2$. Indeed, the valence of $p$ is one less than the number of intervals of the form $(0, \cdot)$ in the one-dimensional part of $\Psi_{G}(p)$.
    \end{lemma}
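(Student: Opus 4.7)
The plan is to invoke the dictionary of Section~\ref{sec:reebdictionary} to match intervals of $\Psi_G(p)$ having $0$ as an endpoint with the upfork structure at $p$. Since $G$ is connected and $f_p = d_G(p, \cdot)$ attains its minimum value $0$ only at $p$, every such interval must correspond to a fork at $p$; and since $p$ is the global minimum, every adjacent direction is upward, so $p$ is an upfork whose up-multiplicity reflects its valence.

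I would begin with a local analysis at $p$. Let $k$ denote the valence of $p$, and let $m$ be the number of connected components of $G \setminus \{p\}$; an Euler characteristic computation gives that $k - m$ independent cycles of $G$ pass through $p$. Examining the extended filtration of $f_p$ as the relative parameter $\tilde\beta$ crosses $\tilde 0$, I would verify that inserting $p$ into the superlevel set simultaneously merges the $m$ superlevel-components (killing $m - 1$ relative-$H_1$ classes) and closes up the $k - m$ cycles through $p$ (killing $k - m$ extended-$H_1$ classes). By the dictionary, these events contribute $(m - 1) + (k - m) = k - 1$ intervals of the form $(\cdot, 0)$ to $\Psi_G(p)$; together with the essential connected-component bar of $G$, born at $f_p(p) = 0$ and dying at the global maximum, the total number of intervals of $\Psi_G(p)$ involving the value $0$ is exactly $k$.

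Conversely, I would confirm that no other interval can have $0$ as an endpoint, which is immediate since $p$ is the unique point where $f_p$ takes the value $0$ and every other fork or extremum lies strictly above height $0$. Combining the two directions yields the stated equality. The formula also subsumes the boundary cases $k = 1$ (leaf vertex) and $k = 2$ (non-vertex or valence-two vertex), since the identity $(m - 1) + (k - m) + 1 = k$ holds for all admissible pairs $(m, k)$.

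The main obstacle is the multiplicity bookkeeping at $p$: namely, verifying that the upfork at $p$ of valence $k$ decomposes into exactly $m - 1$ ordinary and $k - m$ essential upfork pairings. I would handle this step using the explicit pairing procedure from Section~\ref{sec:reebdictionary}, together with the long exact sequence of the pair $(G, G \setminus B_\epsilon(p))$ for small $\epsilon > 0$, which tracks precisely how relative and essential $H_1$ classes die as the superlevel set swallows $p$.
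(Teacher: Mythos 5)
Your argument is correct and arrives at the same place as the paper's, but by a more elaborate route. The paper's proof is a single local computation: for small $r$ the closed ball of radius $r$ about $p$ is $k$ arcs glued at $p$, and collapsing its boundary yields a wedge of $k-1$ circles, so the relative group $H_1(X, X^{[r,+\infty)})$ has rank $k-1$ for all small $r>0$ and vanishes at $r=0$; hence exactly $k-1$ one-dimensional intervals die at $0$. You obtain the same rank $k-1$ by splitting it into $(m-1)$ relative classes and $(k-m)$ extended classes via the long exact sequence of the pair together with an Euler-characteristic count. That decomposition is sound, but it is more bookkeeping than the lemma needs --- and note that the paper explicitly assumes diagrams carry no ordinary/relative/extended labels, so the split is not even observable from $\Psi_{G}(p)$; only the total count matters. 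One point worth flagging: the lemma as stated says the valence equals the number of intervals of the form $(0,\cdot)$ in the one-dimensional part, whereas the paper's own proof concludes that the number of one-dimensional intervals with \emph{death} time zero is the valence \emph{minus one} --- an off-by-one (and birth-versus-death) slip in the statement. Your reconciliation, adding the essential $H_0$ bar born at $0$ to reach a total of $k$, steps outside ``the one-dimensional part''; the count actually used downstream (e.g.\ in Corollary \ref{cor:distinctvalence} and Lemma \ref{lem:loopnonloop}) is your $k-1$ one-dimensional deaths at zero, so your dimension-one bookkeeping is the right quantity to keep.
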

    \begin{proof}
    For $r$ sufficiently small, the ball of radius $r$ around $p$ is isometric to a disjoint union of intervals of the form $[0,r]$, identified at the origin, where the number of intervals is precisely the valence of $p$. In the relative part of our filtration, we compute the homology of this space after identifying its boundary, a space homotopy equivalent to a wedge of circles, where the number of circles is now the valence of $p$ minus one. The rank of this homology group is then equal to the valence of $p$ minus one, until $r=0$ and the homology groups vanish. Thus the number of intervals with death time zero in the one-dimensional part of $\Psi_{G}(p)$ is the valence of $p$ minus one. 
    \end{proof}

        \begin{corollary}
    \label{cor:distinctvalence}
    If $p$ and $q$ are two basepoints with different valences, then $\Psi_{G}(p) \neq \Psi_{G}(q)$. In particular, in our setting where there are no vertices of valence two, it is impossible for a vertex and a non-vertex to produce the same persistence diagram.
    \end{corollary}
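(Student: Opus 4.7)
The plan is to derive this as an essentially immediate consequence of Lemma \ref{lem:detectvalence}. First, I would apply that lemma to each of $p$ and $q$ separately: it tells us that the valence of a basepoint is recovered from its barcode as one plus the number of intervals of the form $(0,\cdot)$ appearing in the one-dimensional part. Since the bottleneck metric (and indeed the notion of equality of barcodes) is sensitive to multiplicities of points, counting such intervals is a well-defined invariant of the barcode.

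Next, I would observe that if $\Psi_{G}(p) = \Psi_{G}(q)$, then the number of intervals of the form $(0,\cdot)$ in the one-dimensional parts of the two barcodes must agree, so by Lemma \ref{lem:detectvalence} the valences of $p$ and $q$ coincide. Contrapositively, if the valences differ, the barcodes cannot be equal, which is the first claim.

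For the second assertion, I would invoke the running convention of this section (established just before the lemma): after suppressing valence-two vertices, every point of $G$ is either a vertex of valence $\geq 3$, a leaf (valence $1$), or a non-vertex interior point which, by definition in the lemma, is assigned valence $2$. Hence in $\mathbf{MGraphs}^*$ with this reduction, a vertex has valence in $\{1\} \cup \{n : n \geq 3\}$, while a non-vertex has valence exactly $2$, so their valences are never equal and the first part of the corollary applies. The only mild subtlety—really the only thing to check beyond quoting the lemma—is that the computation in Lemma \ref{lem:detectvalence} genuinely applies at an arbitrary (possibly non-vertex) point, which is built into that lemma's statement via the convention that non-vertex points have valence $2$.
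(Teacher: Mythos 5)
Your proposal is correct and matches the paper's (implicit) argument exactly: the corollary is stated without proof as an immediate consequence of Lemma \ref{lem:detectvalence}, via precisely the contrapositive reading and the observation that after suppressing valence-two vertices a non-vertex point has valence $2$ while every vertex has valence $1$ or at least $3$. The only thing worth noting is that you read the lemma as ``valence equals one plus the number of $(0,\cdot)$ intervals,'' which follows the lemma's \emph{proof} rather than its stated conclusion (the paper is internally inconsistent by one here), but since either convention makes the count a strictly monotone function of the valence, the corollary follows regardless.
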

    
    \begin{lemma}
    \label{lem:upordownfork}
    For any basepoint $p \in G$, and any vertex $v \in G$  not equal to $p$, $v$ is either an upfork, a downfork, or both, in $\Phi_{G}(p)$. If $v$ is a leaf vertex it is necessarily a downfork.
    \end{lemma}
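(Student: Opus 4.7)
The plan is to combine Lemma \ref{epsilonlemma} with the combinatorial constraints on $G$ (after the valence-two simplification at the start of Section \ref{sec: geninj}) to classify each adjacent direction at $v$ as either ``increasing'' or ``decreasing'' for $f_p := d_G(p,\cdot)$, and then to show that there are always enough directions of one type to produce an upfork or a downfork.

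First I would fix an adjacent direction $\vec{e}$ at the vertex $v \neq p$. Lemma \ref{epsilonlemma} tells us that for $z$ sitting on an initial segment of $\vec{e}$ sufficiently close to $v$, one has $d_G(v,z) = |f_p(v) - f_p(z)|$. Since $d_G(v,z)$ grows at unit rate as $z$ moves away from $v$, this forces $f_p$ to be either strictly increasing or strictly decreasing along $\vec{e}$ on some initial segment; in particular, no direction at $v$ is ``flat''. Moreover, at least one direction at $v$ must be decreasing, since any minimizing geodesic from $v$ to $p$ (nontrivial because $v \neq p$) has a nonempty initial segment along one of the adjacent edges of $v$, and $f_p$ necessarily decreases along that initial segment.

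Next I would split on the valence of $v$. If $v$ is a leaf, then its unique adjacent direction must be the decreasing one, and by the ``one direction suffices'' convention for leaves in the dictionary of Section \ref{sec:reebdictionary}, $v$ is a downfork; this settles the second assertion. Otherwise $v$ has valence at least three, which is the only remaining possibility in our reduced setting. Writing $d$ and $u$ for the numbers of decreasing and increasing adjacent directions at $v$, we have $d \geq 1$ and $d + u \geq 3$. Then either $d \geq 2$, giving two decreasing adjacent directions and hence a downfork, or $u \geq 2$, giving two increasing adjacent directions and hence an upfork. In either case $v$ is an upfork, a downfork, or both.

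The only subtlety is ensuring that the monotonicity afforded by Lemma \ref{epsilonlemma} is strict rather than merely weak, but this is immediate from the identity $|f_p(v) - f_p(z)| = d_G(v,z)$ holding with equality. I do not foresee a genuine obstacle here: the argument is essentially a direct combinatorial consequence of the $\pm 1$-slope behavior of $f_p$ along each edge-direction, combined with the exclusion of valence-two non-leaf vertices that was arranged at the start of the section.
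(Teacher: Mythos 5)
Your proposal is correct and follows essentially the same route as the paper: the paper classifies each adjacent direction at $v$ according to whether it is the initial segment of a geodesic to $p$ (decreasing) or not (increasing), notes that a leaf's unique direction must be geodesic, and for valence at least three observes that either two or more directions are geodesic (downfork) or at most one is, leaving at least two increasing directions (upfork). Your explicit appeal to Lemma \ref{epsilonlemma} to justify that non-geodesic directions are strictly increasing is a slightly more careful packaging of the same dichotomy, and the counting argument is identical.
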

    \begin{proof}
    If $v$ is a leaf, then the edge to which it is adjacent is by necessity the initial segment of a geodesic from $v$ to $p$, making $v$ a downfork. Otherwise, $v$ has valence at least three, so either two or more directions adjacent to $v$ are the initial segments of geodesics from $v$ to $p$, or at most one is. In the former case, $v$ is a downfork. In the latter case, $v$ is an upfork. If $v$ has valence at least four it is possible for it to be both an upfork and a downfork, depending on how many of the adjacent directions to $v$ are the initial segments of geodesics from $v$ to $p$. 
    \end{proof}

\subsection*{Strategy}    
    In light of Corollary \ref{cor:distinctvalence}, we can split our casework into two parts: comparing vertices on the one hand, and comparing non-vertices on the other hand. In either case, our strategy will be the same. We will assume that distinct basepoints produce the same persistence diagram and then deduce the existence of a nontrivial integer linear equality satisfied by the edge lengths of $G$, thus giving a contradiction. It will be apparent from the construction of these equalities that only finitely many integer coefficients show up.

\subsection*{Comparing Vertices}    
We now consider the implications of $\Psi_{G}(v) = \Psi_{G}(w)$ for vertices $w \neq v$. 
    \begin{prop}
    \label{prop:vertexinj}
    If $v,w$ are distinct vertices in $G$, and $\Psi_{G}(v) = \Psi_{G}(w)$, then there is a nontrivial linear equality among edge lengths of $G$.
    \end{prop}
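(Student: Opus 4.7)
The plan is to prove the proposition by contradiction: assume $\Psi_G(v)=\Psi_G(w)$ for distinct vertices $v\ne w$ and suppose, toward contradiction, that the edge weights of $G$ are $\mathbb{Z}$-linearly independent. I first invoke the simplification preceding the proposition so that every vertex of $G$ has valence $1$ or at least $3$, and then apply Corollary \ref{cor:distinctvalence} to deduce that $v$ and $w$ have the same valence. Thus either both are leaves or both lie in the set $V_{\ge 3}$ of vertices of valence at least three.

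The main data I would extract is the multiset $\mathcal{D}(v)$ of nonzero death times in the one-dimensional part of $\Psi_G(v)$. By Lemma \ref{lem:upforkvertex}, every element of $\mathcal{D}(v)$ has the form $d_G(v,u)$ for some $u\in V_{\ge 3}\setminus\{v\}$, and the hypothesis $\Psi_G(v)=\Psi_G(w)$ gives $\mathcal{D}(v)=\mathcal{D}(w)$ as multisets. Under the $\mathbb{Z}$-linear independence assumption, each geodesic length $d_G(v,u)=\sum_{e\in\gamma_{v,u}}\ell_e$ determines the edge multiset of the realizing geodesic uniquely, so an equality $d_G(v,u)=d_G(w,u')$ arising from a matching of $\mathcal{D}(v)$ with $\mathcal{D}(w)$ forces $\gamma_{v,u}$ and $\gamma_{w,u'}$ to have the same edge multiset. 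Since a simple path in a graph is determined by its edge multiset up to reversal, the endpoints must agree as sets: $\{v,u\}=\{w,u'\}$. With $v\ne w$ this forces $u=w$ and $u'=v$, so the only pairing that can arise is the ``diagonal'' one matching the entry $d_G(v,w)$ with $d_G(w,v)$, and any additional entry of $\mathcal{D}$ coming from a vertex of $V_{\ge 3}\setminus\{v,w\}$ supplies the desired contradiction.

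The main obstacle is the residual case $V_{\ge 3}=\{v,w\}$, in which $\mathcal{D}$ consists of the single value $d_G(v,w)$. Here I would turn to birth-time data: by the dictionary of Section \ref{sec:reebdictionary}, the birth times in $\Psi_G(v)$ encode distances from $v$ to leaves of $G$, and since $G\in\mathbf{MGraphs}^*$ has at least three vertices of valence $\ne 2$, at least one leaf is present. Matching leaf-distance multisets under $\mathbb{Z}$-linear independence again forces geodesic edge multisets to agree pairwise, and hence starting and terminal vertices to match. Because a leaf adjacent to $v$ produces a geodesic path starting with an edge incident to $v$ (and not to $w$), a straightforward case analysis distinguishing leaves on $v$'s side from those on $w$'s side of the edge $vw$ yields the required contradiction and completes the proof.
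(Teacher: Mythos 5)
Your overall strategy---work in the contrapositive, and use $\mathbb{Z}$-linear independence to argue that an equality of two distances forces the two realizing geodesics to have the same edge multiset, hence (since a path is determined by its edge set up to reversal) the same endpoint pair---is a genuinely different and in some ways cleaner mechanism than the paper's, which instead compares \emph{smallest} nonzero death times to force $v'=w$, $w'=v$ and then analyzes the closest auxiliary vertex $u$ to the edge $[v,w]$. However, as written your argument has two concrete gaps. First, you treat $\mathcal{D}(v)$ as if it contained $d_G(v,u)$ for \emph{every} $u\in V_{\ge 3}\setminus\{v\}$; Lemma \ref{lem:upforkvertex} only gives the forward containment (every nonzero death time is such a distance), and a vertex of valence at least three that is a downfork but not an upfork in $\Phi_G(v)$ contributes no death time at all (Lemma \ref{lem:upordownfork}). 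So the assertion that ``any additional entry of $\mathcal{D}$ coming from a vertex of $V_{\ge 3}\setminus\{v,w\}$ supplies the contradiction'' presupposes that such an entry exists. This is fillable in your framework---a non-leaf downfork admits two geodesics to the basepoint starting along different edges, hence two distinct edge sets of equal total length, which is already a nontrivial relation---but you must say it, since it is exactly the step that reduces you to the residual case.

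Second, and more seriously, the claim that ``the birth times in $\Psi_G(v)$ encode distances from $v$ to leaves of $G$'' is false, and the paper explicitly warns against it in the remark following Lemma \ref{lem:upforkvertex}: birth times also arise from essential downforks, which can occur at non-vertex points (e.g.\ the point of a cycle antipodal to the basepoint, as in the equilateral-triangle example). In your residual case $V_{\ge 3}=\{v,w\}$ the graph can have several parallel edges between $v$ and $w$, so the birth-time multiset mixes leaf distances with half-cycle quantities, and the diagrams carry no Ord/Rel/Ext labels to separate them. A matched pair of birth times could therefore a priori pair a leaf distance in $\Psi_G(v)$ with a cycle-type birth in $\Psi_G(w)$, and your ``edge multisets agree, hence endpoints agree'' step does not apply to the latter. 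This is repairable---Lemma \ref{lem:deathzero} is precisely the tool: for a vertex basepoint, $2b$ is either the length of a simple cycle through the basepoint or a $\mathbb{Z}$-combination of edge lengths, and under linear independence a simple cycle (each edge once) cannot equal twice a path, so the cross-matchings all yield nontrivial relations and only the leaf-to-leaf matching survives, which then forces two distinct leaf edges to have equal length. But without this disentangling the residual case, which is the delicate part of the proof, is not established.
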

    \begin{proof}
    Suppose first that one of $v$ or $w$ has valence one, so that the other does as well by Lemma \ref{lem:detectvalence}. Then $v$ and $w$ are both leaf vertices, sitting on edges $[v,v']$ and $[w,w']$. By Lemma \ref{lem:upforkvertex}, the smallest nonzero death time in the persistence diagram of $v$ or $w$ is the distance to the closest vertex in $G$ of valence at least three. Since $G$ is connected and does not consist of a single edge, both $v'$ and $w'$ have valence at least three. Thus the smallest nonzero death times in $\Psi_{G}(v)$ and $\Psi_{G}(w)$ are the lengths of $[v,v']$ and $[w,w']$ respectively. $\Psi_{G}(v) = \Psi_{G}(w)$ then implies that these two edges have the same length, a non-trivial linear equality.\\
    
    Suppose next that $v$ and $w$ are distinct vertices of valence three or more. Note that in a connected graph containing at least two distinct vertices of valence at least three, and in which there are no vertices of valence two, every vertex is adjacent to a vertex of valence at least three. Moreover, observe that every path from $v$ or $w$ to a vertex of valence at least three can only hit vertices of valence at least three along the way, as there are no vertices of valence two and a vertex of valence one is a dead-end. Let $v'$ and $w'$ be the closest vertices to $v$ and $w$ respectively with valence at least three. By the prior observation, $v'$ and $w'$ must be adjacent to $v$ and $w$ respectively. This tells us that the smallest nonzero death time in $\Psi_{G}(v)$ or $\Psi_{G}(w)$ is the length of the edge $[v,v']$ or $[w,w']$. The equality $\Psi_{G}(v) = \Psi_{G}(w)$ then implies that these edges have the same length, a nontrivial linear equality unless $v' = w$ and $w' = v$.\\

    If $v' = w$ and $w' = v$, then $v$ and $w$ are the closest vertices of valence at least three to each other, and in fact are the unique closest such vertices, otherwise a non-trivial linear equality among edge lengths has occurred. Let $u$ be the closest vertex to the edge $[v,w]$ among all vertices distinct from $v$ and $w$. If there is more than one such closest vertex, or if $u$ is equidistant from $v$ and $w$, we will have found a nontrivial linear equality among edge lengths. Otherwise, there is a unique closest vertex $u$, and it is strictly closer to one of $v$ or $w$, say $v$. By Lemma \ref{lem:upordownfork}, two possibilities emerge: either $u$ is an upfork (and potentially also a downfork, this is irrelevant) in $\Phi_{G}(v)$, or it is not an upfork, and hence must be a downfork. The latter case, that $u$ is a downfork, implies the existence of distinct geodesics from $u$ to $v$, and hence a nontrivial linear equality among edge lengths. We claim that the former case is impossible: if $u$ is an upfork, the dictionary of \ref{sec:reebdictionary} tells us that the distance $d(u,v)$ is a death time in $\Psi_{G}(v)$. Since $v$ and $w$ are the unique closest vertices to each other, $d(u,v), d(u,w) > d(v,w)$. We have chosen $u$ so that $d(v,w) < d(u,v) < d(x,w)$ for any vertex $x \neq u,v$ of valence at least three. Thus, by Lemma \ref{lem:upforkvertex}, which tells us that death times in $\Psi_{G}(w)$ correspond to distances from $w$ to vertices $x$ of valence at least three, we see that there is no such vertex $x$ for which $d(w,x) = d(u,v)$, and hence the death time $d(u,v)$ in $\Psi_{G}(v)$ cannot be matched by anything in $\Psi_{G}(w)$, so their barcodes cannot be equal.
    
    \end{proof}

\subsection*{Comparing Non-Vertex Points.}
    For the remainder of the proof, we will want to show the analogous result for non-vertex points. It will be useful to distinguish three kinds of basepoints $p \in G$:
    \begin{itemize}
    	\item Case A: The point $p$ sits on a leaf edge. See Figure \ref{CaseA}
    	\item Case B: The point $p$ sits on a non-leaf edge, with both endpoint vertices being upforks in the Reeb graph $\Phi_{G}(p)$. See Figure \ref{CaseB}
    	\item Case C: The point $p$ sits on a non-leaf edge, and exactly one of the endpoint vertices is an upfork in the Reeb Graph $\Phi_{G}(p)$. To be precise, the closer vertex $v$ is an upfork and the further one $w$ is a downfork. This means that there are at least two geodesics from $w$ to $p$ starting along different edges adjacent to $w$. We claim that, without loss of generality, one of these geodesics is the subsegment of $E_p$ from $w$ to $p$, as illustrated in Figure \ref{CaseC}. If not, both geodesics pass through $v$, and we find that there are two distinct geodesics from $w$ to $v$. The existence of distinct geodesics between vertices implies that the edge lengths of $G$ satisfy a nontrivial linear equality (indeed, the sum of edge lengths along one geodesic equals the sum of those along the other), which immediately gives the claimed contradiction.
    \end{itemize}

This case analysis is exhaustive because (1) Lemma \ref{lem:upordownfork} implies that all vertices must be upforks and/or downforks, and (2) the last remaining possibility, namely that $p$ sits on a non-leaf edge and both endpoints are downforks, implies that $p$ sits on a self-loop, and we have assumed that $G$ contains no self-loops.

    \begin{figure}[htb]
    	\labellist
    	\small\hair 2pt
    	\pinlabel $p$ at 50 22
    	\pinlabel $v$ at 68 44
    	\pinlabel $E_p$ at 26 53
    	\endlabellist
        \centering
        \includegraphics{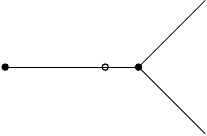}
        \caption{Case A}
        \label{CaseA}
    \end{figure}
    
      \begin{figure}[htb]
      	    	\labellist
      	\small\hair 2pt
      	\pinlabel $p$ at 50 -10
      	\pinlabel $w$ at 25 63 
      	\pinlabel $v$ at 70 31
      	\pinlabel $E_p$ at 46 32
      	\endlabellist
      	\centering
        \includegraphics{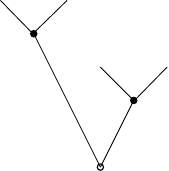}
        \caption{
        Case B. Note that $p$ is not a vertex, but the edge $E_p$ is drawn as it appears in the Reeb graph $\Phi_{G}(p)$, with height proportional to the distance from $p$.} 
        \label{CaseB}
    \end{figure}

      \begin{figure}[htb]
      	      	    	\labellist
      	\small\hair 2pt
      	\pinlabel $p$ at 0 -5
      	\pinlabel $w$ at 0 57 
      	\pinlabel $v$ at 41 14
      	\pinlabel $E_p$ at 15 22
      	\endlabellist
        \centering
        \includegraphics[scale =1.5]{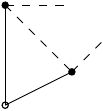}
        \caption{Case C}
        \label{CaseC}
    \end{figure}

Before we continue our proof, we note that there are two very simple pieces of geometric data that can be read off of a persistence diagram.

\begin{lemma}
\label{lem:d1}
For $p \in G$ not a vertex, the smallest nonzero death time is the distance from $p$ to the closest vertex of valence at least three.
\end{lemma}
\begin{proof}
	By Lemma \ref{lem:upforkvertex}, death times correspond to distances from $p$ to certain vertices of valence at least three. The closest such vertex is necessarily an upfork in $\Phi_{G}(p)$, giving rise to the smallest death time, as if it were an upfork it would be the base of a self-loop containing $p$, impossible by hypothesis.
\end{proof}
\begin{remark}
\label{rem:diam}
Secondly, for any point $p \in G$, the zero-dimensional part of the persistence diagram $\Psi_{G}(p)$ contains a single point $(0,D)$, where $D$ is the radius of the metric space at $p$ -- the furthest distance from $p$ to another point in $G$.
\end{remark}

\begin{definition}
For an edge $E$ in a metric graph $(G,d_G)$, $L(E)$ will denote the length of $E$.
\end{definition}

Next, the following lemma will be useful in the case analysis to come.

\begin{lemma}
\label{lem:sameedge}
Let $p,q$ be two non-vertex points in $G$ sitting on the same non-boundary edge $E$. If $\Psi_{G}(p) = \Psi_{G}(q)$ then there is a nontrivial linear equality among edge lengths of $G$.
\end{lemma}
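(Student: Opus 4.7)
The plan is to leverage Lemmas \ref{lem:upforkvertex} and \ref{lem:d1} to translate the hypothesis $\Psi_G(p) = \Psi_G(q)$ into edge-length identities. Let $v, w$ denote the endpoints of $E$; since $E$ is a non-boundary edge, both have valence at least three. Write $a$ and $b$ for the positions of $p$ and $q$ along $E$ measured from $v$, so that up to swapping $p$ and $q$ we have $a < b$. I shall work in the generic situation where the geodesics from $p$ and $q$ to the endpoints $v$ and $w$ run along $E$, so that $d_G(p, v) = a$, $d_G(p, w) = L(E) - a$, $d_G(q, v) = b$, $d_G(q, w) = L(E) - b$; any failure of these identities means a non-$E$ path through $G$ realizing one of the geodesic distances, which after a short case analysis also produces a nontrivial linear identity.

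The first substantive step is to show $a + b = L(E)$. Any vertex $u$ of valence $\geq 3$ distinct from $v, w$ is reached from $p$ only by first passing through $v$ or $w$, so $d_G(p, u) > \min(a, L(E) - a)$. Lemma \ref{lem:d1} therefore gives that the smallest nonzero death time of $\Psi_G(p)$ equals $\min(a, L(E) - a)$, and similarly for $\Psi_G(q)$ it is $\min(b, L(E) - b)$; equating these and using $a < b$ yields $a = L(E) - b$, i.e.\ $a + b = L(E)$. Relabeling if necessary, assume henceforth $a < L(E)/2 < b$, so that $d_G(p, v) = d_G(q, w) = a$.

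Next I would study the sub-multiset of intervals with death time exactly $a$. By the distance calculation above, every vertex $u \neq v$ of valence $\geq 3$ satisfies $d_G(p, u) > a$, so by Lemma \ref{lem:upforkvertex} the only vertex contributing an interval of death time $a$ in $\Psi_G(p)$ is $v$ itself; symmetrically, only $w$ contributes in $\Psi_G(q)$. Writing
\[ S_p := \{B : (B, a) \in \Psi_G(p)\}, \quad S_q := \{B : (B, a) \in \Psi_G(q)\}, \]
the multiset $S_p$ is determined purely by the upfork pairings at $v$ in $\Phi_G(p)$, $S_q$ by those at $w$ in $\Phi_G(q)$, and $\Psi_G(p) = \Psi_G(q)$ forces $S_p = S_q$.

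The final and most delicate step is to extract a nontrivial edge-length relation from $S_p = S_q$. Each $B \in S_p$ is the $f_p$-value of a specific critical point of $f_p = d_G(p, \cdot)$ lying in the up-region above $v$, namely the portion of $f_p^{-1}[a, \infty)$ reached by leaving $v$ along its up-edges; consequently $B - a$ is a nonnegative $\tfrac{1}{2}\mathbb{Z}$-combination of the lengths of edges in that up-region, the halves appearing at midpoints of cycles. The analogous description holds for $B \in S_q$ with $w$ in place of $v$. Matching the smallest element of $S_p$ with that of $S_q$ yields an identity of the form $\sum_e c_e^v L(e) = \sum_e c_e^w L(e)$, which after clearing factors of $2$ is a nontrivial $\mathbb{Z}$-linear relation among edge lengths whenever the support of $(c_e^v)_e$ differs from that of $(c_e^w)_e$, the typical case since the two up-regions lie in different parts of $G$. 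The main obstacle will be the degenerate configurations where the two coefficient vectors coincide --- for instance when $v$ and $w$ are joined by parallel edges, or lie on a common short cycle whose antipode is symmetrically placed --- and these require iterating the matching argument down the sorted sequence of birth times in $S_p$, or appealing to multiplicity constraints on the intervals coming from the valences of $v$ and $w$, until a genuinely nontrivial relation is produced.
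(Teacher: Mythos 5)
Your first two steps (establishing $d_G(p,v)=d_G(q,w)=a$ with $a+b=L(E)$, and isolating the intervals of death time exactly $a$ as coming only from the upfork $v$ for $p$ and only from $w$ for $q$) are sound and take a genuinely different route from the paper, which instead compares \emph{death} times by locating the closest third vertex $u$ of valence at least three to the pair $\{v,w\}$ and showing that $u$ either produces an unmatchable death time in one diagram or is a downfork forcing two distinct geodesics from $u$ to $v$. But your final step has a genuine gap, and it is not a repairable technicality: you never establish that the relation $\sum_e c_e^v L(e)=\sum_e c_e^w L(e)$ obtained by matching $S_p$ with $S_q$ is \emph{nontrivial}, and you explicitly defer the symmetric configurations to an unspecified iteration ``until a genuinely nontrivial relation is produced.'' That iteration cannot succeed in general: if $G$ consists of two vertices $v,w$ joined by three edges of generic lengths, the map reversing every edge is an isometric automorphism exchanging $v$ and $w$, so the points $p$ and $q$ at distance $a$ from $v$ and from $w$ on $E$ satisfy $\Psi_G(p)=\Psi_G(q)$ while no nontrivial $\mathbb{Z}$-linear relation among edge lengths exists --- the coefficient vectors $(c_e^v)$ and $(c_e^w)$ coincide at every level of your sorted matching. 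The lemma is only true under the standing assumption of Section \ref{sec: geninj} that $G$ has at least three vertices of valence not equal to two (the paper notes in Section \ref{sec: selfloops} that this lemma is one of the two places requiring a third vertex), and your argument never invokes a third vertex anywhere, so it cannot distinguish the valid cases from the counterexample above. Any completion of your strategy must import the third vertex to break the symmetry between the up-regions of $v$ and $w$, which is precisely the mechanism the paper's proof is built around.

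A secondary inaccuracy: your claim that every $B\in S_p$ satisfies $B-a\in\tfrac{1}{2}\mathbb{Z}\langle L(e)\rangle$ over edges in the up-region above $v$ omits the case where the downfork paired with $v$ has its two descending branches reaching $p$ through $v$ and through $w$ respectively; there $2B$ equals the length of a simple cycle containing $p$, so $2B = L(E)+\sum_i L(E_i)$ and $B-a$ involves the non-edge quantity $a$ itself (this is case (i) of Lemma \ref{lem:deathzero}). Your matching identities would then mix positions of basepoints with edge lengths, and the bookkeeping needed to cancel the $a$'s is exactly the kind of case analysis the paper carries out elsewhere (e.g.\ in Proposition \ref{prop:nonvertexdiffcasesinj}) and that your sketch does not address.
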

\begin{proof}
Consider Figure \ref{fig:genericproofsameedge}. The closest vertex of valence at least three to $p$ is $v$, as since $d(q,w) < d(p,w)$ we could otherwise deduce that $\Psi_{G}(p) \neq \Psi_{G}(q)$ from Lemma \ref{lem:d1}. Similarly, the closest vertex of valence at least three to $q$ is $w$. From Lemma \ref{lem:d1}, we know that $d_1 = d(p,v) = d(q,w)$. Note that $p$ must be strictly closer to $v$ than $q$, and $q$ must be strictly closer to $w$ than $p$, for if, say, there is a geodesic from $q$ to $v$ of length $\leq d(p,v)$, then this geodesic passes through $w$, meaning that $d_1 = d(q,w) < d(p,v) = d_1$, a contradiction.\\
    
Take $u$ to be a closest vertex to either $v$ or $w$ among the other vertices in $G$, and suppose without loss of generality that it is closer to $v$. If there is more than one choice for $u$, or if it is equidistant from $v$ and $w$, then we have a nontrivial linear equality among edge lengths, so otherwise we may assume that $d(u,v) < d(u,w)$ and $d(u,v) < d(x,v),d(x,w)$ for any fourth vertex $x$ of valence at least three. If $u$ is an upfork { in the Reeb graph $\Phi_{G}(p)$}, then by Lemma \ref{lem:upforkvertex} it gives rise to a death time in {$\Psi_{G}(p)$} strictly greater than $d_1$, and strictly smaller than any other death time in {$\Psi_{G}(q)$, so that $\Psi_{G}(p) \neq \Psi_{G}(q)$}, contrary to our hypothesis. Thus $u$ is a downfork, so that there are two geodesics from $u$ to $p$, and either both geodesics pass through $v$ or one passes through $w$. It is impossible for one to pass through $w$, as any path from $u$ to $p$ passing through $w$ has length strictly longer than $L(E) + d_1$, and hence cannot be a geodesic. Thus both pass through $v$, and hence there are distinct geodesics from from $u$ to $v$, implying some nontrivial linear equality among edge lengths.
    \end{proof}

    \begin{figure}[htb]
    	\labellist
    	\small\hair 2pt
    	\pinlabel $v$ at 19 17
    	\pinlabel $u$ at 8 86
    	\pinlabel $E$ at 22 61
    	\pinlabel $p$ at 49 23
    	\pinlabel $d_1$ at 34 44
    	\pinlabel $q$ at 116 20
    	\pinlabel $d_1$ at 129 44
    	\pinlabel $w$ at 146 17
    	\endlabellist
	\centering
	\includegraphics{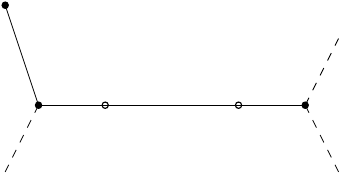}
	\caption{
		When $E = E_p = E_q$. Note that $\Psi_{G}(p) = \Psi_{G}(q)$ and Lemma \ref{lem:d1} imply that $p,q$ are equidistant to their closest vertices of valence at least three, which are $v$ and $w$ respectively.}
	
	\label{fig:genericproofsameedge}
\end{figure}

Finally, we prove a lemma that will prove useful in both this section and section \ref{sec: selfloops}.

\begin{lemma}
	\label{lem:deathzero}
	Let $G$ be any metric graph, $p \in G$ a basepoint, and $(b,\cdot)$ a point in the one-dimensional persistence diagram of $\Psi_{G}(v)$. If $p$ is a vertex, then $2b$ is either (i) the length of a simple cycle in $G$ containing $p$, or (ii) the sum of lengths of a collection of edges $\sum_{i \in U}L(E_i)$. If $p$ is a non-vertex, let $d_1$ and $d_2$ be the distances from $p$ to the vertices at the ends of its edge $E_p$. Then either (i) $2b$ is the length of a simple cycle in $G$ containing $p$, or (ii) $2(b-d_1)$ is the sum of lengths of a collection of edges $\sum_{i \in U}L(E_i)$ omitting $E_p$, or (iii) $2(b-d_2)$ is the sum of lengths of a collection of edges $\sum_{i \in U}L(E_i)$ omitting $E_p$.
\end{lemma}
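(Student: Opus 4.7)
The plan is to read off the birth time $b$ from the Reeb-graph dictionary in Section~\ref{sec:reebdictionary}, and then analyze the geodesic structure attached to whichever feature produces it. A point $(b,\cdot)$ in the one-dimensional part of $\Psi_G(p)$ must arise either from an essential downfork $v$ of $f_p = d_G(p,\cdot)$ (the birth of an extended $H_1$ class) or from a local maximum $m$ of $f_p$ (the birth of a relative $H_1$ class), so that $b = d_G(p,x)$ for a witness $x \in \{v,m\}$; since $p$ is the unique minimum of $f_p$, this witness is distinct from $p$, and at any non-leaf witness the two defining decreasing directions each extend to a geodesic from $x$ to $p$ of length $b$. Write these $\gamma_1, \gamma_2$, so that the combined length $|\gamma_1| + |\gamma_2|$ equals $2b$. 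The remainder of the proof is a case analysis of how $\gamma_1$ and $\gamma_2$ sit inside $G$.

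For $p$ a vertex, one observes that $\gamma_1, \gamma_2$ terminate at the vertex $p$ and hence traverse complete edges of $G$ (together with the two half-pieces of the edge through $x$ when $x$ is mid-edge, which combine to contribute exactly $L(E_x)$); thus $2b$ decomposes as a sum of edge lengths with overlapping edges counted with multiplicity, which is case (ii). When $\gamma_1, \gamma_2$ are furthermore edge-disjoint and share no intermediate vertex, their concatenation forms a simple closed loop through $p$ of length $2b$, giving case (i). The upgrade from (ii) to (i) relies on the standard fact that any shared intermediate point on two distinct geodesics from $x$ to $p$ is itself a downfork, allowing one to restrict attention to the innermost branching and conclude simplicity of the loop.

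For $p$ a non-vertex on $E_p = [v_1, v_2]$ with $d_i = d_G(p, v_i)$, the first step is to argue that each $\gamma_i$ approaches $p$ along $E_p$ from exactly one side: a geodesic from $x$ to $p$ cannot traverse $E_p$ end-to-end, since the detour through both endpoints of $E_p$ is not length-minimizing. Two subcases then remain. If $\gamma_1, \gamma_2$ approach $p$ from opposite sides of $E_p$, then $\gamma_1 \cup \gamma_2$ fully covers $E_p$ together with auxiliary paths from $x$ to $v_1$ and to $v_2$, and when these auxiliary paths are edge-disjoint and share no intermediate vertex the resulting closed loop is a simple cycle through $p$ of length $2b$, giving case (i). If instead both $\gamma_i$ approach from the same side, say through $v_1$, then the common terminal segment $[v_1, p]$ of length $d_1$ can be stripped from both paths, leaving two geodesics from $x$ to $v_1$ of combined length $2(b - d_1)$; by the same length argument no such geodesic can use $E_p$, so this realizes $2(b - d_1)$ as a sum of edge lengths omitting $E_p$ (case (ii)), with the mirror situation yielding case (iii).

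The main obstacle will be the careful bookkeeping of edge multiplicities when $\gamma_1$ and $\gamma_2$ overlap along shared subpaths: in those configurations the expression of $2b$ (or $2(b - d_i)$) as $\sum_{i \in U} L(E_i)$ requires interpreting $U$ as an indexed collection in which some edges appear more than once, which the statement permits. Promoting such a sum to a simple cycle through $p$ further requires ruling out shared intermediate vertices, for which the downfork observation above is the central tool. A degenerate leaf case, when $m$ is a leaf of $G$ and only one geodesic of length $b$ reaches $p$, is subsumed by case (ii) by traversing this single geodesic twice.
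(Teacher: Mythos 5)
Your overall strategy is the same as the paper's: identify the birth time $b$ with the height of a downfork (or local maximum) $x$ of $f_p$, extend its two decreasing directions to geodesics $\gamma_1,\gamma_2$ from $x$ to $p$ of total length $2b$, and case-analyze their union; the vertex case, the leaf case, and the ``same side'' non-vertex case are all handled as in the paper. However, for $p$ a non-vertex there is a genuine gap. Your primary split is ``$\gamma_1,\gamma_2$ enter $E_p$ from the same side'' versus ``from opposite sides,'' and in the opposite-sides branch you only treat the sub-case where the auxiliary paths from $x$ to $v_1$ and to $v_2$ are internally disjoint (yielding a simple cycle). If they enter from opposite sides but share an interior point $y$, your accounting gives $2b = L(E_p) + \sum_{i}L(E_i)$ with $E_p$ appearing exactly once, which matches none of the three allowed conclusions: the loop revisits $y$ so it is not a simple cycle, and the quantity differs from $2(b-d_1)$ and from $2(b-d_2)$ by $d_2-d_1\ne 0$ in general. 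This sub-case is non-vacuous (take $x$ joined to a vertex $y$ by two parallel edges, with $y$ admitting geodesics to $p$ through both $v_1$ and $v_2$). Your closing remark about bookkeeping of edge multiplicities does not repair it, because the defect is the single copy of $E_p$ in the sum, not the multiplicities of the other edges; note that the omission of $E_p$ is exactly the feature of the lemma that the downstream arguments in Sections~\ref{sec: geninj} and~\ref{sec: selfloops} rely on.

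The missing step is a short re-routing observation. If $\gamma_1$ and $\gamma_2$ share an interior point $y$, replace the portion of $\gamma_2$ beyond $y$ by the corresponding portion of $\gamma_1$: both sub-arcs are geodesics of the correct lengths, so the result is still a geodesic from $x$ to $p$ with the same initial direction at $x$, and now both geodesics enter $E_p$ through the same endpoint, reducing to your same-side branch where the common terminal segment $[v_i,p]$ of length $d_i$ is stripped to give $2(b-d_i)$ as a sum of edges omitting $E_p$. This is in effect how the paper organizes the dichotomy: either the two geodesics first meet at $p$ (internally disjoint, hence a simple cycle through $p$), or they meet earlier, in which case they may be taken to coincide from the meeting point onward. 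With that adjustment your argument goes through. A minor additional point: your aside that a shared intermediate point is ``itself a downfork'' is neither needed nor always accurate --- at a point where the two geodesics diverge on the way down it is a downfork, but at a point where they converge coming from $x$ it is an upfork.
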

\begin{proof}
	The point $(b,\cdot)$ corresponds to a downfork $q$ in the Reeb graph $\Phi_{G}(p)$ at distance $b$ from $p$. The downfork $q$ may either be a leaf vertex or a point (not necessarily a vertex) of valence at least two.\\
	
	Suppose first that $q$ is a leaf vertex. If $p$ is a vertex then $b$ is the sum of the lengths of the edges along this geodesic, and hence so is $2b$ (by repeating each edge twice): see the left-hand side of Figure \ref{fig:deathzerofig1}. If $p$ is not a vertex, then the geodesic from $q$ to $p$ passes through one of the vertices on the boundary of $E_p$, and so either $b-d_1$ or $b-d_2$ is the sum of the length of the edges along the geodesic from $q$ to this vertex. Thus either $2(b-d_1)$ or $2(b-d_2)$ is also the sum of the length of edges: see the right-hand side of Figure \ref{fig:deathzerofig1}. Note that when $p$ is a non-vertex, the edge $E_p$ does not appear in the edges in the sum.\\
	
	Suppose next that $q$ is not a leaf vertex. As $q$ is a downfork, there are at least two geodesics from $q$ to $p$. These geodesics either (i) first meet at $p$, (ii) meet before arriving at $p$. In scenario (i), the sum of the two geodesics is a simple cycle of length $2b$. See Figure \ref{fig:deathzerofig3}.\\
	
	In scenario (ii), if $p$ is a vertex, then $2b$ is the sum of edge lengths as show on the left-hand side of Figure \ref{fig:deathzerofig2}. If $p$ is a non-vertex, then either $2(b-d_1)$ or $2(b-d_2)$ is the sum of edge lengths, as shown on the right-hand side of Figure \ref{fig:deathzerofig2}. Note that when $p$ is a non-vertex, the edge $E_p$ does not appear in the edges in the sum.

\end{proof}
	\begin{figure}[htb]
		    	\labellist
		\small\hair 2pt
		\pinlabel $p$ at 40 29
		\pinlabel $q$ at 8 135
		\pinlabel $p$ at 155 15
		\pinlabel $d_1$ at 150 35
		\pinlabel $q$ at 138 135
		\endlabellist
	\centering
	\includegraphics[]{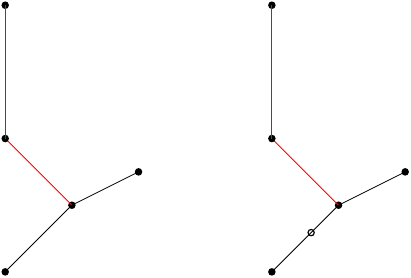}
	\caption{The edges appearing in the sum are drawn in red.}
	\label{fig:deathzerofig1}
\end{figure}
	\begin{figure}[htb]
		    	\labellist
		\small\hair 2pt
		\pinlabel $p$ at 72 1
		\pinlabel $q$ at 55 101 
		\endlabellist
	\centering
	\includegraphics[]{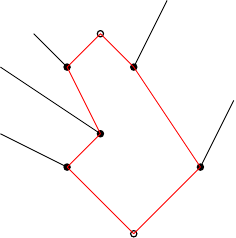}
	\caption{The simple cycle of of length $2b$ is drawn in red.}
	\label{fig:deathzerofig3}
\end{figure}
	\begin{figure}[htb]
		    	\labellist
		\small\hair 2pt
		\pinlabel $p$ at 65 -5
		\pinlabel $q$ at 86 119 
		\pinlabel $q$ at 245 119 
		\pinlabel $p$ at 200 17
		\pinlabel $d_2$ at 215 22
		\endlabellist
	\centering
	\includegraphics[]{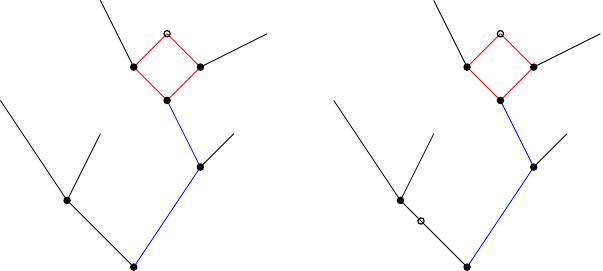}
	\caption{The edges appearing in the sum are drawn in red or blue. Red edges appear once in the sum and blue edges appear twice.}
	\label{fig:deathzerofig2}
\end{figure}

We now come to the proof itself. The following two propositions demonstrate the result of Proposition \ref{prop:vertexinj} for non-vertices. The first proposition deals with pairs of basepoints in distinct cases (among the cases A,B, and C, as defined above), and the second proposition deals with pairs of basepoints of the same case. It is important to note that our persistence diagrams come labelled by dimension but do not tell us if a point comes from ordinary, relative, or extended persistence. Some of the following casework emerges as a result of this ambiguity.{ 
\begin{prop}
\label{prop:nonvertexdiffcasesinj}
    If $v,w$ are distinct non-vertex points in $G$ of distinct cases, and $\Psi_{G}(v) = \Psi_{G}(w)$, then there is a nontrivial linear equality among edge lengths of $G$.
    \end{prop}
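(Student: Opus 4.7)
The plan is to perform case analysis on the three unordered pairs of cases $(A,B)$, $(A,C)$, $(B,C)$ and in each one derive a nontrivial linear equality among the edge lengths of $G$ from the assumption $\Psi_G(v) = \Psi_G(w)$. The uniform setup will be to compare the diagrams using three tools: Remark \ref{remark:geomremark} (which, via the $0$-dimensional diagram, shows the radii $D_v = D_w$ agree), Lemma \ref{lem:d1} (giving $d_1^v = d_1^w$, the distance to the closest vertex of valence $\geq 3$), and Lemma \ref{lem:deathzero} (classifying every $1$-dimensional birth time $b$ as being of one of three forms: $2b$ equals a simple cycle length through the basepoint, or $2(b-d_1)$ equals a sum of edge lengths omitting $E_p$, or $2(b-d_2)$ equals such a sum).

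My strategy is to identify, for each of the three cases $A, B, C$, a \emph{characteristic} $1$-dimensional point in $\Psi_G(p)$ whose birth or death time encodes distinctive geometric data about the configuration at $p$. For Case A, the leaf $\ell$ contributes a birth at $b = d_2^v = L(E_v)-d_1$, corresponding to the empty-sum instance (iii) of Lemma \ref{lem:deathzero}. For Case B, each of the two upfork endpoints of $E_w$ contributes a death time ($d_1$ and $d_2^w$), and the corresponding births come from essential downforks elsewhere in the graph (Lemma \ref{lem:deathzero} cases (ii) and (iii)). For Case C, the downfork at the farther vertex of $E_w$ produces a simple cycle through $E_w$ of length $2d_2^w$, giving a birth $b = d_2^w$ of type (i).

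For each pair of distinct cases I would then match the characteristic interval on the $v$-side with some interval in $\Psi_G(w)$ (as these multisets agree) and apply Lemma \ref{lem:deathzero} to the matched interval on the $w$-side. This yields an equality between a specific quantity derived from the geometry at $v$ and a sum of edge lengths of $G$ (possibly shifted by $d_1$ or $d_2^w$). Substituting $d_1^v = d_1^w$ and, where applicable, $d_2^v + d_1 = L(E_v)$, produces a linear combination of edge lengths of $G$ with integer coefficients. The residual continuous parameters $d_1$ cancel precisely because the characteristic quantity on the $v$-side is built from the full edge length $L(E_v)$ minus $d_1$, while the matched expression on the $w$-side is structured around $d_1^w = d_1^v$. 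I would then argue that the resulting identity is nontrivial by exploiting that $v, w$ occupy structurally different cases: for instance, $E_v$ is a leaf edge in Case A and hence cannot coincide with $E_w$ in Case B or C, so $L(E_v)$ appears on only one side of the derived relation.

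The main obstacle I anticipate is showing that each derived relation is truly nontrivial rather than a tautology $0 = 0$. The $(B, C)$ pair will be the hardest, because in Case C the very existence of the alternative geodesic at the downfork already imposes an implicit relation $L^\ast = L(E_w) - 2 d_1$ (where $L^\ast$ is the length of the alternate $b$-to-$a$ path), and one must extract information beyond this built-in identity. My plan here is to compare a \emph{second} characteristic interval (for example the one whose death time is $d_2^w$ in Case B, which must be matched on the $v$-side by a death at the same value, forced by Lemma \ref{lem:upforkvertex} to equal the distance from $v$ to some valence-$\geq 3$ vertex) and combine the two identities to eliminate the Case-C-constraint and isolate a genuine $\mathbb{Z}$-linear equality on edge lengths. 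Careful bookkeeping of the combinatorial options in Lemma \ref{lem:deathzero}, together with the observation that $E_v \neq E_w$ in the $(B,C)$ case whenever a nontrivial interval is matched across cases, will close out the argument.
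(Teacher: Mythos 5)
Your plan follows essentially the same route as the paper's proof: the same three pairwise case comparisons, the same characteristic death-time-zero points in one-dimensional persistence (the leaf birth in Case A, the upfork death times in Case B, the cycle through the downfork in Case C), the same use of Lemma~\ref{lem:deathzero} to classify the matched interval on the other side, and the same nontriviality argument via $E_v \neq E_w$. One small note: for the $(B,C)$ pair, where both basepoints lie on interior edges and could a priori share an edge, the paper disposes of that subcase up front by invoking Lemma~\ref{lem:sameedge}, after which the bookkeeping turns out to be no harder than for $(A,B)$.
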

    
    \begin{proof}
    Our proof consists of three parts, pertaining to which pairs of cases our points belong: (1) cases A and B, (2) cases B and C, and (3) cases A and C. In all these cases, $\Psi_{G}(v) = \Psi_{G}(w)$ and Lemma \ref{lem:d1} implies that the distances from $v$ and $w$ to their closest vertices of valence at least three are equal, and denoted $d_1$.\\

    {\bf Cases A and B:}
    \begin{figure}[htb]
    	    	\labellist
    	\small\hair 2pt
    	\pinlabel $E_p$ at 29 20
    	\pinlabel $p$ at 50 23 
    	\pinlabel $d_2$ at 18 42
    	\pinlabel $d_1$ at 57 42
    	\pinlabel $q$ at 164 4
		\pinlabel $E_q$ at 164 41
		\pinlabel $r_2$ at 141 41
		\pinlabel $d_1$ at 180 28
    	\pinlabel $w$ at 141 77
\pinlabel $v$ at 187 42

    	\endlabellist
        \centering
        \includegraphics{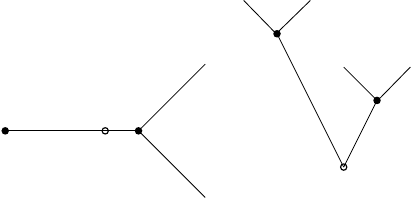}
        \caption{Cases A and B}
        \label{fig:AB}
    \end{figure}
    
    Suppose $p$ is of case $A$ and $q$ is of case $B$, with the edge lengths as shown in Figure \ref{fig:AB} We know that
    
    \begin{equation}
    \label{eqn:gen1}
       d_1 + d_2 = L(E_p) 
    \end{equation}
\begin{equation}
\label{eqn:gen2}
     d_1 + r_2 = L(E_q)\
\end{equation}

 Clearly, $E_p \neq E_q$ as one is a leaf edge and the other is not.\\
    
    By hypothesis, $r_2$ is a death time in $\Psi_{G}(q)$, corresponding in $\Psi_{G}(p)$ to the distance from $p$ to a vertex of valence at least three. Since the geodesic from $p$ to any such vertex passes through the segment of $E_p$ of length $d_1$, we can deduce that $r_2 - d_1$ is the sum of the lengths of edges in $G$, where none of these edges is $E_p$ by construction, and none of them are $E_q$ either, as $r_2 - d_1 < r_2 + d_1 = L(E_q)$. Thus there is a set of edges $U$ along a geodesic between vertices in $G$, omitting $E_p$ and $E_q$, for which 
    
    \begin{equation}
    \label{eqn:gen3}
    r_2 - d_1 = \sum_{i \in U} L(E_i)    
    \end{equation}

     Lastly, $\Psi_{G}(p)$ contains the point {$(d_2,0)$}. This cannot be a point in zero-dimensional persistence, i.e. that the furthest point from $p$ is the leaf vertex at distance $d_2$. Indeed, the point $q$ is necessarily further away than $p$ from this leaf vertex, and hence contains a larger zero-dimensional death time, making it impossible for $\Psi_{G}(p) = \Psi_{G}(q)$. Thus {$(d_2,0)$} is a point in one-dimensional persistence for $\Psi_{G}(p)$, and hence, correspondingly for $\Psi_{G}(q)$. {By Lemma \ref{lem:deathzero}, three possibilities emerge: either $2d_2$ is the length of a simple cycle in $G$, or $2(d_2 - d_1)$ is the sum of edges in $G$, omitting $E_q$, or $2(d_2 - r_2)$ is the sum of edges in $G$, omitting $E_q$.\\
      
    In the first case, taking 2(\ref{eqn:gen1}) + (\ref{eqn:gen3})  - (\ref{eqn:gen2}) gives
    \[2d_2 = 2L(E_p) + \sum_{i \in U} L(E_i) - L(E_q)\]
    
   Now, this should equal the length of a simple cycle in $G$. However, $E_p$ cannot appear among the edges of this cycle, as it is a leaf edge. This implies a nontrivial linear equality among edge lengths.\\
   
   In the second case, we have that
     \begin{equation}
     \label{eqn:gen4}
    d_2 - d_1 = \frac{1}{2} \sum_{i \in S} L(E_i)     
     \end{equation}
     where this sum of edges omits $E_q$. Taking (\ref{eqn:gen2}) - (\ref{eqn:gen1}) - (\ref{eqn:gen3}) + (\ref{eqn:gen4}), we obtain
    \[0 = L(E_q) -L(E_p) + \frac{1}{2}\sum_{i \in S} L(E_i) - \sum_{i \in U} L(E_i)\]
    
    Multiplying both sides by two gives a nontrivial linear equality among edge lengths. Indeed, the right side cannot cancel out because neither collection $U$ nor $S$ contain $E_q$.\\

     In the third case, we have that
     \begin{equation}
     \label{eqn:gen5}
    d_2 - r_2 = \frac{1}{2} \sum_{i \in S} L(E_i)     
     \end{equation}
     and again this sum of edges omits $E_q$. Taking (\ref{eqn:gen2}) - (\ref{eqn:gen1}) + (\ref{eqn:gen5}) gives
    \[0 = L(E_q) - L(E_p) + \frac{1}{2}\sum_{i \in S} L(E_i)\]
    
    multiplying by two, as before, produces a nontrivial linear equality among edge lengths.}\\

     {\bf Cases B and C:}

    \begin{figure}[htb]
    	    	\labellist
\small\hair 2pt
\pinlabel $E_p$ at 47 34
\pinlabel $p$ at 48 0
\pinlabel $d_2$ at 26 32
\pinlabel $d_1$ at 62 19
\pinlabel $v_1$ at 71 36
\pinlabel $w_1$ at 26 72
\pinlabel $q$ at 108 -5
\pinlabel $E_q$ at 119 20
\pinlabel $r_2$ at 100 20
\pinlabel $d_1$ at 126 3
\pinlabel $w_2$ at 110 56
\pinlabel $v_2$ at 148 17

\endlabellist
        \centering
        \includegraphics{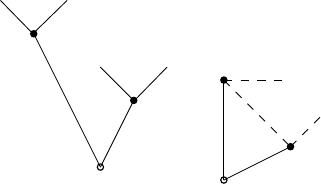}
        \caption{Cases B and C}
        \label{fig:BC}
    \end{figure}
    
    Refer to Figure \ref{fig:BC}. In this case, we see that
    
    \begin{equation}
    \label{eqn:gen6}
      d_1 + d_2 = L(E_p)  
    \end{equation}
    \begin{equation}
    \label{eqn:gen7}
       d_1 + r_2 = L(E_q)\ 
    \end{equation}

    To start, we assume that $E_p \neq E_q$ by passing to Lemma \ref{lem:sameedge}. As before, $d_2$ is a death time in $\Psi_{G}(p)$, corresponding in $\Psi_{G}(q)$ to the distance from $q$ to a vertex of valence three or greater. Without loss of generality this geodesic passes through $v_2$, as by the hypotheses of case C any geodesic passing through $w_2$ can be re-routed to pass through $v_2$ and have the same length. We see that 
    \begin{equation}
    \label{eqn:gen8}
    d_2 - d_1 = \sum_{i \in U} L(E_i)    
    \end{equation}
     where this collection of edges omits $E_p$ and $E_q$. We can also see that the point {$(r_2,0)$} shows up in the one-dimensional persistence of $\Psi_{G}(q)$, and hence also in $\Psi_{G}(p)$. As before, we end up with three possibilities.{ If there is a  simple cycle in $G$ of length $2r_2$, then taking} 2(\ref{eqn:gen7}) + (\ref{eqn:gen8}) - (\ref{eqn:gen6}) gives
    \[2r_2 = 2L(E_q)  + \sum_{i \in U} L(E_i) - L(E_p)\]
    This should equal the length of a simple cycle in $G$. However, in such a cycle each edge shows up once, whereas the right-hand side above has $E_p$ appear twice, so that we must have a nontrivial linear equality among edge lengths.\\

    {Otherwise, Lemma \ref{lem:deathzero} guarantees that either
        \begin{equation}
        \label{eqn:gen9}
    r_2 - d_1 = \frac{1}{2} \sum_{i \in S} L(E_i)     
     \end{equation}
     or
        \begin{equation}
        \label{eqn:gen10}
    r_2 - d_2 = \frac{1}{2} \sum_{i \in S} L(E_i)     
     \end{equation}
    In either case, the sum of edges omits $E_q$, and the resulting analysis is the same as when comparing cases A and B.}\\

     {\bf Cases A and C:}

    \begin{figure}[htb]
    	    	\labellist
\small\hair 2pt
\pinlabel $p$ at 50 24
\pinlabel $d_2$ at 22 22
\pinlabel $d_1$ at 60 22
\pinlabel $q$ at 130 -5
\pinlabel $E_q$ at 141 25
\pinlabel $r_2$ at 120 28
\pinlabel $d_1$ at 149 9
\pinlabel $w$ at 128 60
\pinlabel $v$ at 167 18
\endlabellist
        \centering
        \includegraphics{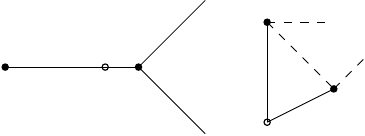}
        \caption{Cases A and C}
        \label{fig:AC}
    \end{figure}
    
    Refer to Figure \ref{fig:AC}. The one-dimensional persistence in $\Psi_{G}(q)$ contains the point {$(r_2,0)$}. As for the point {$(d_2,0)$} in $\Psi_{G}(p)$, it may either be a point in one-dimensional or zero-dimensional persistence. Lemma \ref{lem:detectvalence} tells us that $p$ and $q$ both have a single point of the form {$(\cdot,0)$} in one-dimensional persistence. If {$(d_2,0)$} is a point in one-dimensional persistence, $\Psi_{G}(p) = \Psi_{G}(q)$ implies $r_2 = d_2$ and thus $L(E_p) = L(E_q)$, and so we obtain a nontrivial equality among edge lengths. In the latter case, the radius at $p$ is $d_2$, realized by the distance from $p$ to its adjacent leaf vertex; since $q$ is strictly further from this leaf vertex than $p$, it has a larger radius, which violates our assumption that $\Psi_{G}(p) = \Psi_{G}(q)$.
    \end{proof}
    
    \begin{prop}
    \label{prop:nonvertexsamecaseinj}
    If $v,w$ are distinct non-vertex points in $G$ of the same kind, and $\Psi_{G}(v) = \Psi_{G}(w)$, then there is a nontrivial linear equality among edge lengths of $G$.
    \end{prop}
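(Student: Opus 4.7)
The plan is to follow the same three-case template as Proposition~\ref{prop:nonvertexdiffcasesinj}, now splitting on whether $v$ and $w$ are both of case~A, both of case~B, or both of case~C. In every sub-case, the first step is to reduce to the situation $E_v \neq E_w$. For (B,B) and (C,C), $E_v$ is an interior edge, and the case $E_v = E_w$ is handled directly by Lemma~\ref{lem:sameedge}. For (A,A), if $v$ and $w$ sit on a common leaf edge $[u,\ell]$ with $\ell$ the leaf, then the shared smallest positive death time $d_1$ from Lemma~\ref{lem:d1} equals $d(v,u) = d(w,u)$; since both points lie at the same parametric distance from $u$ along the same edge, this forces $v = w$, contradicting distinctness.

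Henceforth assume $E_v \neq E_w$, and write $L(E_v) = d_1 + d_2$ and $L(E_w) = d_1 + r_2$ using the shared value of $d_1$. The strategy is to exploit the next layer of barcode information in each case. For (B,B) both $d_2$ and $r_2$ are themselves death times in the respective barcodes; either $d_2 = r_2$, producing the nontrivial equality $L(E_v) = L(E_w)$, or $d_2$ matches some other death time in $\Psi_G(w)$, which via Lemma~\ref{lem:upforkvertex} yields an identity $d_2 - d_1 = \sum_{i \in U} L(E_i)$ with $U$ omitting $E_v$ and $E_w$. For (A,A), the relevant feature is the death-zero one-dimensional point $(d_2, 0)$ arising from the leaf adjacent to $E_v$: either $d_2 = r_2$ and we are done, or $(d_2, 0)$ is matched by a distinct death-zero point in $\Psi_G(w)$, to which we apply Lemma~\ref{lem:deathzero} with $b = d_2$ and parameters $(d_1, r_2)$, obtaining either that $2d_2$ is a simple cycle length through $w$ or that $2(d_2 - d_1)$ or $2(d_2 - r_2)$ is an edge sum omitting $E_w$. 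Case (C,C) proceeds by combining the (B,B)-style analysis on the upfork side of $E_v$ with a Lemma~\ref{lem:deathzero} argument on the downfork side, paralleling the treatment of case (B,C) in Proposition~\ref{prop:nonvertexdiffcasesinj}.

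In every sub-case the identity obtained combines with $L(E_v) = d_1 + d_2$ and $L(E_w) = d_1 + r_2$ to produce an integer linear combination of edge lengths equal to zero. The main obstacle, as in Proposition~\ref{prop:nonvertexdiffcasesinj}, is verifying that this combination is nontrivial. The bookkeeping rests on the combinatorial facts that a leaf edge cannot lie on any simple cycle, that a simple cycle through a non-vertex $p$ traverses $E_p$ exactly once, and that any edge sum produced by Lemma~\ref{lem:deathzero} at $p$ omits $E_p$. Together these constraints force at least one of $E_v$, $E_w$ to appear in the final combination with a nonzero coefficient, delivering the required nontrivial $\mathbb{Z}$-linear relation among edge lengths of $G$.
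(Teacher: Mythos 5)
Your overall template matches the paper's, and the reduction to $E_v \neq E_w$ (via Lemma~\ref{lem:sameedge} for interior edges and via Lemma~\ref{lem:d1} for a shared leaf edge) is exactly right. The gap is in how you close the linear algebra. In each case you have three unknowns $d_1, d_2, r_2$ and the two decompositions $L(E_v) = d_1 + d_2$, $L(E_w) = d_1 + r_2$; adjoining a \emph{single} further identity from Lemma~\ref{lem:upforkvertex} or Lemma~\ref{lem:deathzero} gives three equations in three unknowns, which generically just determines $d_1, d_2, r_2$ and yields no relation among edge lengths. Concretely: in (B,B) you only extract $d_2 - d_1 = \sum_{i\in U} L(E_i)$; the paper also needs the symmetric identity coming from $r_2$ being a death time of $\Psi_G(w)$ that must be matched by a death time of $\Psi_G(v)$, namely $r_2 - d_1 = \sum_{i\in T} L(E_i)$ or $r_2 - d_2 = \sum_{i\in T} L(E_i)$, and only the four equations together cancel $d_1, d_2, r_2$. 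Your (A,A) fallback branch is worse off: if the downfork realizing the birth time $d_2$ in $\Psi_G(w)$ is reached through the non-leaf endpoint of $E_w$, Lemma~\ref{lem:deathzero} gives $2(d_2 - d_1) = \sum_{i\in U} L(E_i)$, and since $r_2$ is here the distance to a leaf (a birth time, not a death time) there is no second equation available to eliminate $d_1$, so the system does not close.

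The missing idea that makes (A,A) and (C,C) work --- and which renders your fallback branches unnecessary --- is Lemma~\ref{lem:detectvalence}: a valence-two basepoint has exactly \emph{one} point of the form $(\cdot, 0)$ in its one-dimensional diagram. Hence $(d_2,0) \in \Psi_G(v)$ and $(r_2,0)\in\Psi_G(w)$ must be matched to each other, forcing $d_2 = r_2$ and therefore $L(E_v) = L(E_w)$ outright; there is no ``distinct death-zero point'' to fall back on. (One must also first rule out that $(d_2,0)$ is really the zero-dimensional point, i.e.\ that the adjacent leaf realizes the radius at $v$: the paper does this by observing that $w$ is strictly farther from that leaf, so the radii, and hence the zero-dimensional points, of the two diagrams would disagree.) With these two repairs your outline becomes the paper's proof.
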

    
    \begin{proof}

        {\bf Cases A and A:}\\

    \begin{figure}[htb]
    	    	\labellist
\small\hair 2pt
\pinlabel $p$ at 51 24
\pinlabel $d_2$ at 22 25
\pinlabel $d_1$ at 62 25
\pinlabel $q$ at 179 23
\pinlabel $r_2$ at 150 25
\pinlabel $d_1$ at 189 25
\endlabellist
        \centering
        \includegraphics{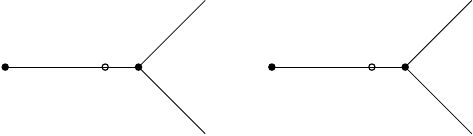}
        \caption{Case A and Case A}
        \label{fig:AA}
    \end{figure}

    Suppose $p$ and $q$ are both of case A, as in Figure \ref{fig:AA}. Note that for every $d \in [0,L(E_p)]$, the edge $E_p$ contains a unique point $x$ at distance $d$ from its unique vertex of valence at least three, which will necessarily be the smallest nonzero death time in $\Psi_{G}(x)$ by Lemma \ref{lem:d1}, and similarly for $E_q$. If $E = E_p = E_q$ then, again by Lemma \ref{lem:d1}, both $p$ and $q$ would be the same distance $d$ from the non-leaf vertex of $E$, implying $p=q$. Thus, if $p \neq q$, we may deduce $E_p \neq E_q$.\\  
    
   Moving on, $\Psi_{G}(p)$ contains the point {$(d_2,0)$} and $\Psi_{G}(q)$ contains the point {$(r_2,0)$}. We claim both of these points are in one-dimensional persistence, for if, say, {$(d_2,0)$} is a point in zero-dimensional persistence, the radius at $p$ is $d_2$, realized by the distance from $p$ to its adjacent leaf vertex. As we observed earlier, since $q$ is strictly further from this leaf vertex than $p$, it has a larger radius, which violates our assumption that $\Psi_{G}(p) = \Psi_{G}(q)$.\\
    
  Lemma \ref{lem:detectvalence} tells us that $p$ and $q$ both have a single point of the form {$(\cdot,0)$} in one-dimensional persistence. Thus if both {$(d_2,0)$} and {$(r_2,0)$} are points in one-dimensional persistence, we must have $d_2 = r_2$, and hence $L(E_p) = L(E_q)$, a nontrivial equality among edge lengths since $E_p \neq E_q$.\\

        {\bf Cases B and B:}\\

    \begin{figure}[htb]
    	    	\labellist
    	\small\hair 2pt
    	\pinlabel $p$ at 53 -5
    	\pinlabel $d_1$ at 60 12 
    	\pinlabel $d_2$ at 27 26
    	\pinlabel $E_p$ at 48 27
    	\pinlabel $q$ at 131 -5
		\pinlabel $d_1$ at 140 12 
		\pinlabel $r_2$ at 108 26
		\pinlabel $E_q$ at 128 27
    	\endlabellist
        \centering
        \includegraphics{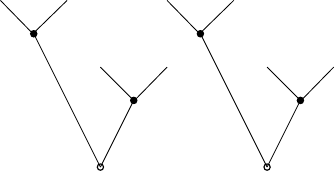}
        \caption{Case B and Case B}
        \label{fig:BB}
    \end{figure}
    
   Refer to figure \ref{fig:BB}. We observe that
   \begin{equation}
   \label{eqn:gen11}
    d_1 + d_2 = L(E_p)   
   \end{equation}
   \begin{equation}
   \label{eqn:gen12}
    d_1 + r_2 = L(E_q)   
   \end{equation}

  Using Lemma \ref{lem:sameedge}, we assume without loss of generality that $E_p \neq E_q$. If $d_2 = r_2$ then $L(E_p) = L(E_q)$, a nontrivial linear equality. Thus let us assume, again without loss of generality, that $d_2 < r_2$. Since $d_2$ is a death time for $p$, and hence corresponds to the distance from $q$ to a vertex, and since the corresponding geodesic must pass along the segment of length $d_1$ adjacent to $q$ due to length considerations, we deduce further that
   \begin{equation}
   \label{eqn:gen13}
    d_2 - d_1 = \sum_{i \in S} L(E_i)   
   \end{equation}
    Since $d_2 - d_1 < L(E_p)$ and $d_2 - d_1 < d_2 < r_2 < L(E_q)$, none of the edges present in the sum can be $E_p$ or $E_q$.\\
   
   On the other hand, since $r_2$ is a death time for $q$, this must correspond to a geodesic from $p$ to a vertex in $G$, passing either (i) through the segment of length $d_1$ or (ii) through the segment of length $d_2$. In (i), we have that
   \begin{equation}
   \label{eqn:gen14}
    r_2 - d_1 = \sum_{i \in T} L(E_i)   
   \end{equation}
   By construction, none of the edges in this sum are $E_p$, as the geodesic of length $r_2$ starts at the non-vertex point $p$ on $E_p$, and subtracting away $d_1$ removes the length of the segment from $p$ to a boundary vertex of $E_p$. Moreover, $r_2 - d_1 < L(E_q)$, so this sum of edges omits $E_q$ as well. Taking (\ref{eqn:gen11}) - (\ref{eqn:gen12}) - (\ref{eqn:gen13}) + (\ref{eqn:gen14}) gives
    
    \[0 = L(E_p) - L(E_q) - \sum_{i \in S} L(E_i) + \sum_{i \in T} L(E_i)\]
    
    a nontrivial linear equality among edge lengths.\\
   
   In (ii), we have 
   \begin{equation}
   \label{eqn:gen15}
    r_2 - d_2 = \sum_{i \in T} L(E_i)   
   \end{equation}
   the sum of edge lengths not including $E_p$ or $E_q$ (by the same argument as in the prior case). Taking (\ref{eqn:gen15}) + (\ref{eqn:gen11}) - (\ref{eqn:gen12}) gives

    \[0 = \sum_{i \in T} L(E_i) + L(E_p) - L(E_q)\]
    
    again a nontrivial linear equality among edge lengths.\\
    
          {\bf Cases C and C:}\\
    \begin{figure}[htb]
    	    	\labellist
    	\small\hair 2pt
    	\pinlabel $p$ at 0 -5
    	\pinlabel $d_1$ at 20 5 
    	\pinlabel $d_2$ at -6 26
    	\pinlabel $E_p$ at 14 20
    	 \pinlabel $q$ at 68 -5
    	\pinlabel $d_1$ at 84 5
    	\pinlabel $r_2$ at 60 26
    	\pinlabel $E_q$ at 78 20
    	\endlabellist
        \centering
        \includegraphics{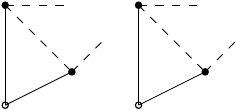}
        \caption{Case C and Case C}
        \label{fig:CC}
    \end{figure}
    Refer to Figure \ref{fig:CC}. Let us assume, using Lemma \ref{lem:sameedge}, that $E_p \neq E_q$. We know that {$(d_2,0)$} shows up in the 1-dimensional persistence of $\Psi_{G}(p)$ and {$(r_2,0)$} shows up in the 1-dimensional persistence of $\Psi_{G}(q)$. By Lemma \ref{lem:detectvalence}, both $p$ and $q$ have a single point of the form {$(\cdot,0)$} in one dimensional persistence. Thus, if $\Psi_{G}(p) = \Psi_{G}(q)$, then we must have $d_2 = r_2$ and hence $L(E_p) = L(E_q)$. Since $E_p \neq E_q$, this is a nontrivial equality among edge lengths.\\
     
\end{proof}

Proposition \ref{genericinj} now follows from Propositions \ref{prop:vertexinj}, \ref{prop:nonvertexdiffcasesinj}, \ref{prop:nonvertexsamecaseinj}, and Corollary \ref{cor:distinctvalence}.

\section{The Case of Self-Loops and Few Vertices of Valence not Equal to Two}
 \label{sec: selfloops}
 
We have noted that certain combinatorial types, such as combinatorial graphs $X$ with topological self-loops, force automorphisms regardless of the geometry chosen. Thus we cannot hope for $\Psi_{X}$ to be injective for any choice of edge lengths on $X$. Still, we claim that it is generically possible, in these cases, to reconstruct a metric graph $G$ from $\mathcal{BT}(G)$. As in the prior section, we remove all vertices of valence two, merging their adjacent edges into a single, longer edge.\\

We now demonstrate that the failure of injectivity introduced by self-loops is generically isolated to the self-loop, and that $\Psi_{G}$ does not identify a point on a self-loop with a point outside it. 

\begin{lemma}
\label{lem:loopnonloop}
Let $G$ be a metric graph with a topological self-loop $C$. Suppose that there exist pairs of points $p, q \in G$, with $p \in C$ and $q \notin C$, such that $\Psi_{G}(p) = \Psi_{G}(q)$. Then there is a nontrivial integer linear equality among the edge lengths of $G$.
\end{lemma}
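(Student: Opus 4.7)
As in the previous section, assume that $G$ has no valence-two vertices, so that the topological self-loop $C$ reduces to a single self-loop edge $E_C$ of length $\ell$ based at a vertex $v_C$. The crux of the argument is the following geometric claim, which I will verify using the Reeb-graph dictionary of Section~\ref{sec:reebdictionary}: for every $p \in C$, the cycle $C$ contributes the extended $H_1$ point $(\ell/2, 0)$ to $\Psi_G(p)$. The essential downfork for this feature is always the point of $C$ antipodal to $p$ (at distance $\ell/2$), and any path in the sublevel set $f^{-1}((-\infty, \ell/2])$ joining the two downfork directions adjacent to the antipode is forced to traverse $p$ itself, since the two arcs of $C$ away from the antipode meet in the sublevel set only at $p$, and excursions through $v_C$ or the rest of $G$ provide no higher minimum. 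Hence the paired essential upfork is $p$, at height $0$.

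For $q \notin C$, by sharp contrast, every geodesic from $q$ to a point on $C$ must first reach $v_C$, and the same pairing analysis at $q$ produces the extended point $(\mu + \ell/2,\, \mu) \in \Psi_G(q)$, where $\mu = d(q, v_C) > 0$. In particular, the death coordinate of $C$'s contribution at $q$ is strictly positive. This asymmetry between the feature $(\ell/2,0)$ for $p \in C$ and $(\mu + \ell/2, \mu)$ for $q \notin C$ is what the proof exploits.

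The plan is then the following short argument. Assume $\Psi_G(p) = \Psi_G(q)$. The point $(\ell/2, 0) \in \Psi_G(p)$ must also appear in $\Psi_G(q)$, and its death coordinate $0$ forces the essential upfork associated to this feature in $\Phi_G(q)$ to be $q$ itself. Therefore $q$ lies on some simple cycle $C^{*} \subseteq G$ whose antipodal essential downfork (from $q$) is at distance $\ell/2$, giving $L(C^{*}) = \ell$. Since $q \notin C$ we have $C^{*} \neq C$, and in particular $E_C \notin C^{*}$; so the identity
\[
L(E_C) \;=\; \sum_{E \in C^{*}} L(E)
\]
is a nontrivial $\mathbb{Z}$-linear relation among the edge lengths of $G$, as required. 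The main technical task in writing this up will be carefully justifying the pairing computation in the first paragraph, in particular the degenerate positions $p = v_C$ and $p$ at the midpoint of $E_C$, where the identities of essential downfork and essential upfork along $C$ swap; in all sub-cases, however, a direct application of the dictionary confirms that the resulting extended point is $(\ell/2, 0)$.
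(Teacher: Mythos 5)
Your argument is correct, and it takes a genuinely different and substantially shorter route than the paper's. The paper first separates the vertex case from the non-vertex case via Lemma~\ref{lem:detectvalence}, and then, for non-vertices, runs through four sub-cases (A--D) for the local structure at $q$, each time combining Lemma~\ref{lem:deathzero} with ad hoc birth/death-time bookkeeping to extract a linear relation. You instead isolate the single extended point $(\ell/2,0)$ that $C$ contributes to $\Psi_G(p)$ for \emph{every} $p\in C$, and exploit its vanishing death coordinate: since the only point of $G$ at height $0$ is the basepoint itself, and the superlevel set at height $0$ is all of $G$ (so the upfork at $q$ is essential, ruling out a relative-part reading of $(\ell/2,0)$), the matching feature in $\Psi_G(q)$ must be an extended class whose essential upfork is $q$. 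This is exactly the extra information that collapses the dichotomy of Lemma~\ref{lem:deathzero} to its first branch: if the two descent geodesics from the paired downfork to $q$ met at an interior point $m\neq q$, the path down one and up the other would connect the two downfork directions with minimum $f(m)>0$, forcing a positive death time. Hence they are internally disjoint and bound a simple cycle of length $\ell$ through $q$, which cannot contain the self-loop edge, giving the relation. The one step you should make fully explicit in the write-up is precisely this disjointness argument --- it is where the death coordinate does its work, and it is not the degenerate-position bookkeeping for $p\in C$ that you flagged as the main technical task (that part is routine). What the paper's longer route buys in exchange is that its case figures A--D are reused verbatim in the remainder of Section~\ref{sec: selfloops} for the reconstruction discussion, whereas your argument, being purely diagram-theoretic, stands alone.
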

\begin{proof}
By Lemma \ref{lem:detectvalence}, $p$ and $q$ are either both vertices or both non-vertices. Suppose first that they are vertices. $\Psi_{G}(p) = \Psi_{G}(q)$ implies that the latter contains the point $(L(C)/2,0)$. By Lemma \ref{lem:deathzero}, either (i) $L(C)$ is equal to the length of a simple cycle in $G$ containing $q$, or (ii) $L(C) = \sum_{i \in U}L(E_i)$ for some set of edges in $G$. Case (i) implies a nontrivial linear equality among edge lengths, as $q$ does not sit on $C$, and case (ii) implies a nontrivial linear equality unless the set of edges $U$ consists exactly of $C$, impossible if $q \notin C$.\\

Next, let us suppose that $p$ and $q$ are non-vertices. By Lemma \ref{lem:d1}, they are the same distance $d_1$ from their closest vertices of valence at least three.{ The point $p$ sits on a self-loop, but the point $q$ may not. Indeed, there are four possibilities for the point $q$, corresponding to the three cases A, B, and C of section \ref{sec: geninj}, and the fourth case D, when $q$ sits on a self-loop.}\\

{\bf {Case A:}}
\begin{figure}[htb]
	    	\labellist
	\small\hair 2pt
	\pinlabel $p$ at 33 -5
	\pinlabel $x$ at 8 17 
	\pinlabel $E_p$ at 24 29
	\pinlabel $u$ at 41 46
	\pinlabel $d_1$ at 88 26
	\pinlabel $q$ at 192 26 
	\pinlabel $r_2$ at 146 26
	\pinlabel $d_1$ at 230 26
		\pinlabel $E_q$ at 169 55
	\pinlabel $v$ at 255 44
	\endlabellist
    \centering
    \includegraphics{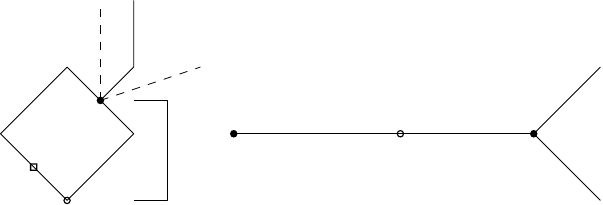}
    \caption{}
    \label{fig:AD}
\end{figure}

Refer to Figure \ref{fig:AD}. The point $q$ sits on a leaf edge $E_q$. Consider the point $x$ that is on the other side of the loop $E_p$ from $u$. This produces a downfork in $\Phi_{G}(q)$, and hence a point in $\Psi_{G}(q)$ with birth time $d_1 + d(v,u) + L(E_p)/2$. $\Psi_{G}(p) = \Psi_{G}(q)$ means that there is a similar birth time in $\Psi_{G}(p)$. This is strictly larger than $L(E_p)/2$, and hence the geodesic to the corresponding downfork passes through $u$.{ By Lemma \ref{lem:deathzero}}, this is equal to $d_1 + \frac{1}{2}\sum_{i \in U} L(E_i)$ for some collection of edges in $G$ omitting $E_p$. This implies $d(v,u) + L(E_p)/2 = \frac{1}{2} \sum_{i \in U} L(E_i)$, a nontrivial linear equality among edge lengths.\\

{\bf {Case B}:}
\begin{figure}[htb]
	    	\labellist
	\small\hair 2pt
	\pinlabel $p$ at 33 -5
\pinlabel $x$ at 8 17 
\pinlabel $E_p$ at 24 29
\pinlabel $u$ at 41 46
\pinlabel $d_1$ at 88 26
\pinlabel $q$ at 160 -5 
\pinlabel $v$ at 182 47
\pinlabel $w$ at 136 65
\pinlabel $d_1$ at 174 23
\pinlabel $r_2$ at 137 29
\pinlabel $E_q$ at 155 40
	\endlabellist
    \centering
    \includegraphics{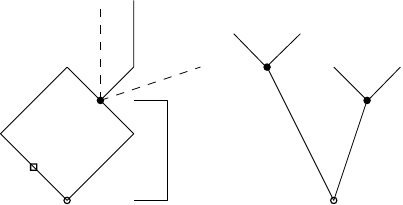}
    \caption{}
    \label{fig:BD}
\end{figure}

{The point $q$ sits on an internal edge $E_q$ whose endpoints are both upforks in the Reeb graph $\Phi_{G}(q)$, as depicted in Figure \ref{fig:BD}}. We have that
\begin{equation}
    \label{eqn:bd1}
    r_2 + d_1 = L(E_q)
\end{equation}
 Moreover, since $r_2$ shows up as a death time in $\Psi_{G}(q)$, it must also be a death time in $\Psi_{G}(p)$, corresponding by the dictionary of Section \ref{sec:reebdictionary} to the distance from $p$ to an upfork in $\Phi_{G}(p)$. The geodesic realizing this distance passes through $u$, and hence
\begin{equation}
    \label{eqn:bd2}
    r_2 - d_1 = \sum_{i \in U}L(E_i)
\end{equation}
where the collection $U$ of edges omits $E_p$ by construction and has a total length too short to contain $E_q$.\\

The one-dimensional persistence diagram in $\Psi_{G}(p)$ contains the point $(L(E_p)/2,0)$.{ By Lemma \ref{lem:deathzero}, we have a trichotomy. The first possibility is that $L(E_p)/2$ is half the length of a simple cycle on which $q$ sits. This implies a nontrivial linear equality as this simple cycle cannot consist of $E_p$. The second possibility is that
\begin{equation}
    \label{eqn:bd3}
    L(E_p)/2 - d_1 = \frac{1}{2}\sum_{i \in S} L(E_i)
\end{equation}
with the sum of edges on the right side omitting $E_q$. Taking (\ref{eqn:bd1}) - (\ref{eqn:bd2}) + 2(\ref{eqn:bd3}) gives
\[L(E_p) =  L(E_q) +\sum_{i \in S}L(E_i) - \sum_{i \in U}L(E_i)\]
a nontrivial linear equality. In the third possibility, we have
\begin{equation}
    \label{eqn:bd4}
    L(E_p)/2 - r_2 = \frac{1}{2}\sum_{i \in S} L(E_i)
\end{equation}
Note that this set $S$ is distinct from the one in (\ref{eqn:bd3}). Taking (\ref{eqn:bd1}) + (\ref{eqn:bd2}) + 2(\ref{eqn:bd4}) gives
\[L(E_p) = L(E_q) + \sum_{i \in U}L(E_i) + \sum_{i \in S}L(E_i)\] 
a nontrivial linear equality.}\\

{\bf {Case C:}}
\begin{figure}[htb]
	    	\labellist
	\small\hair 2pt
		\pinlabel $p$ at 33 -5
	\pinlabel $x$ at 8 17 
	\pinlabel $E_p$ at 24 29
	\pinlabel $u$ at 41 46
	\pinlabel $d_1$ at 88 26	
	\pinlabel $q$ at 161 -5
	\pinlabel $d_1$ at 178 17
	\pinlabel $r_2$ at 152 36
		\pinlabel $v$ at 198 47
	\pinlabel $w$ at 155 72
	\pinlabel $E_q$ at 171 40
	\endlabellist
    \centering
    \includegraphics{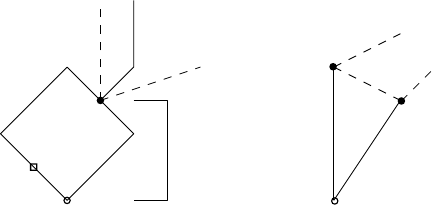}
    \caption{}
    \label{fig:CD}
\end{figure}

{The point $q$ sits on an internal edge, with the closer endpoint being a downfork and the further endpoint being an upfork in the Reeb graph $\Phi_{G}(q)$, as in Figure \ref{fig:CD}. Note that, generically speaking, the segment of $E_p$ from $w$ to $q$ must be a geodesic, otherwise there are two distinct geodesics from $w$ to $q$ that pass through $v$, and hence two distinct geodesics from $w$ to $v$, a nontrivial linear equality. Thus one of the geodesics from $w$ to $q$ consists of a segment of $E_p$ and the other passes through $v$, and hence any geodesic starting at $q$ passing through $w$ can be rerouted to a geodesic passing through $v$.\\
 
Consider the point $x$ that is on the other side of the loop $E_p$ from $u$. This produces a downfork in $\Phi_{G}(q)$, and hence a point in $\Psi_{G}(q)$ with birth time $d(q,u) + L(E_p)/2$. As we may assume that the geodesic from $q$ to this downfork can be rerouted through $v$, we have $d(q,u) + L(E_p)/2 = d_1 + d(v,u) + L(E_p)/2$. $\Psi_{G}(p) = \Psi_{G}(q)$ means that there is a similar birth time in $\Psi_{G}(p)$. This is strictly larger than $L(E_p)/2$, and hence the geodesic to the corresponding downfork passes through $u$. By Lemma \ref{lem:deathzero}, this is equal to $d_1 + \frac{1}{2}\sum_{i \in U} L(E_i)$ for some collection of edges in $G$ omitting $E_p$. This implies $d(v,u) + L(E_p)/2 = \frac{1}{2} \sum_{i \in U} L(E_i)$, a nontrivial linear equality among edge lengths.}\\

{\bf {Case D:}}

{Refer to Figure \ref{fig:DD}. The point $q$ sits on a self-loop distinct from the one on which $p$ lies.} The one-dimensional persistence diagram in $\Psi_{G}(p)$ contains the point $(L(E_p)/2,0)$, and similarly $\Psi_{G}(q)$ contains $(L(E_q),0)$. Lemma \ref{lem:detectvalence} states that these one-dimensional persistence diagrams contain unique points with death time zero, so $\Psi_{G}(p) = \Psi_{G}(q)$ implies $L(E_p) = L(E_q)$, a nontrivial linear equality as $q$ does not lie on $E_p$.
\end{proof}
\begin{figure}[htb]
	    	\labellist
	\small\hair 2pt
			\pinlabel $p$ at 33 -5
	\pinlabel $x$ at 8 17 
	\pinlabel $E_p$ at 24 29
	\pinlabel $u$ at 41 46
	\pinlabel $d_1$ at 88 26	
	\pinlabel $p$ at 156 -5
	\pinlabel $y$ at 131 17 
	\pinlabel $E_q$ at 147 29
	\pinlabel $v$ at 164 46
	\pinlabel $d_1$ at 211 26
	\endlabellist
	\centering
	\includegraphics{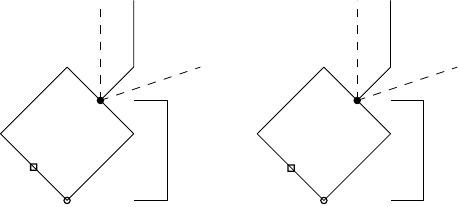}
	\caption{}
	\label{fig:DD}
\end{figure}

The upshot of the prior lemma is that, generically speaking, the failure of injectivity introduced by topological self-loops is local, occurring on the loop itself. Indeed, by lemma \ref{lem:d1}, the pairs of points on a topological self-loop that produce identical persistence diagrams are precisely those exchanged by the unique automorphism flipping the loop (this presumes, of course, that $G$ is not a circle). Figure \ref{fig:loopcollapse} demonstrates how a self-loop in $G$ becomes a leaf edge in $\mathcal{BT}(G)$. To reconstruct $G$, we detect that a failure of injectivity has occured -- the newly created leaf vertex in $\mathcal{BT}(G)$ cannot have come from a leaf vertex in $G$, as Lemma \ref{lem:detectvalence} tells us that such a leaf-vertex has no point in its one-dimensional persistence diagram with death time zero, whereas the leaf vertex in $\mathcal{BT}(G)$ is a barcode with just one such point, whose birth time is half the circumference of the original loop (note that persistence diagrams are labelled by dimension, so we can be certain to which dimension a particular point belongs). Our statement that the edge lengths of our graphs satisfy no nontrivial integer linear equalities implies that this failure of injectivity must come from a topological self-loop, and so we know how to reinsert the loop when reconstructing $G$. 
\begin{figure}[htb]
	    	\labellist
	\small\hair 2pt
	\pinlabel $G$ at 90 140
	\pinlabel $\mathcal{BT}(G)$ at 250 130
	\endlabellist
    \centering
    \includegraphics{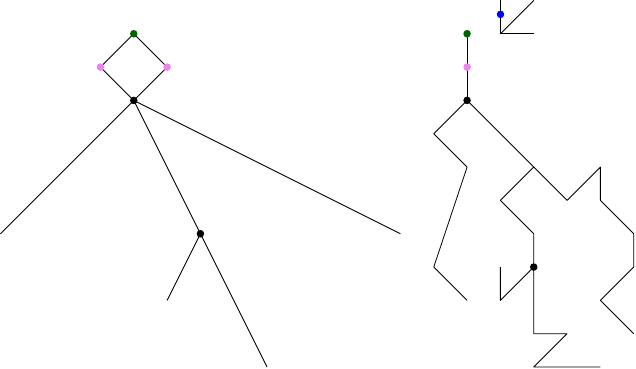}
    \caption{A self-loop in $G$ becomes a leaf edge in $\mathcal{BT}(G)$. The barcode at the end of this leaf edge contains a point of the form $(\cdot, 0)$ in one-dimensional persistence, informing us that it cannot have come from a leaf-vertex in $G$, but rather from a topological self-loop.}
    \label{fig:loopcollapse}
\end{figure}

The last possibility to consider is when there are fewer than three vertices in $G$ of valence distinct from two. If there are no such vertices then $G$ is a circle, whose persistence diagram is a single point. This point records the circumference of the circle, and as we have seen in the proof of Proposition \ref{birthdeathvary}, the only graphs producing one-point persistence diagrams are circles. Moving on, the only graphs with a single vertex of valence not equal to two are those arising as wedges of circles. The proof of Lemma \ref{lem:loopnonloop} demonstrates that the barcode transform of such a graph is a wedge of thorns, and by the method discussed above it is possible to detect this failure of injectivity and reconstruct the graph as well.\\

Next, the graphs with precisely two vertices of valence not equal to two are of the following form: two vertices $v$ and $w$ connected by a single edge or multiple edges, with some topological self-loops attached at $v$ and $w$, see Figure \ref{fig:twoverticesloops}. The results of Section \ref{sec: geninj} imply that the only failures of injectivity happen either at the self-loops or for pairs of points on the same edge connecting $v$ and $w$. Indeed, of the various proofs presented in Section \ref{sec: geninj}, only Proposition \ref{prop:vertexinj} (injectivity for vertices) and Lemma \ref{lem:sameedge} (injectivity for non-vertices on the same edge) require the existence of a third vertex. We claim that, when $G$ has topological self-loops, $\Psi_{G}$ is generically injective on the edges connecting $v$ and $w$. For suppose that $p$ and $q$ are two points on an edge $E$ connecting $v$ and $w$, with $\Psi_{G}(p) = \Psi_{G}(q)$. We know from Lemma \ref{lem:d1} that $p$ and $q$ are at the same distance $d_1$ from their closest vertices of valence at least three. Suppose, without loss of generality, that the points $p,q$ are arranged so that one encounters first $p$ and then $q$ when traveling from $v$ to $w$ along $E$, as in Figure \ref{fig:twoverticesloops}. We claim that a shortest geodesic from $p$ to one of the vertices $v,w$ must be the subsegment of $E$ connecting $p$ to $v$. Otherwise, the shortest geodesic is the subsegment of $E$ connecting $p$ to $w$, and hence, as there is a shorter sub-geodesic connecting $q$ to $w$, we find that $p$ and $q$ cannot be equidistant from their closest vertices of valence at least three. Symmetrically, a shortest geodesic from $q$ to one of $v,w$ must be the subsegment of $E$ connecting $q$ and $w$. Thus $d(p,v) = d(q,w) = d_1$. We claim further that $p$ is strictly closer to $v$ than $w$, as if $d(p,v) = d(p,w)$, the geodesic from $p$ to $w$ cannot pass through $v$ and passes instead through $q$, so that $d_1 = d(p,v) = d(p,w) > d(q,w) = d_1$, a contradiction. Similarly, $q$ is strictly closer to $w$ than $v$.
\begin{figure}[htb]
	    	\labellist
	\small\hair 2pt
	\pinlabel $v$ at 59 65
	\pinlabel $C$ at 36 76 
	\pinlabel $d_1$ at 77 104
	\pinlabel $p$ at 107 127
	\pinlabel $E$ at 140 145
	\pinlabel $q$ at 183 124 
	\pinlabel $d_1$ at 197 101
	\pinlabel $w$ at 200 65
		\pinlabel $C'$ at 224 68
	\pinlabel $z$ at 123 104
	\pinlabel $E'$ at 138 102
	\pinlabel $E''$ at 140 56
	\endlabellist
    \centering
    \includegraphics{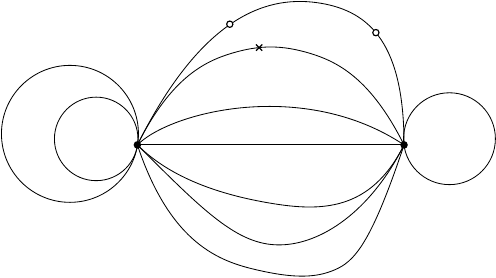}
    \caption{}
    \label{fig:twoverticesloops}
\end{figure}

 Suppose that at least one of $v$ or $w$ has an attached topological self-loop. Consider the shortest such loop (as two equal-length loops imply a nontrivial linear equality); without loss of generality, it is attached at $v$, and let us denote the loop edge by $C$. Then $\Psi_{G}(p)$ has a one-dimensional point born at $d_1 + L(C)/2$, and the same must be true of $\Psi_{G}(q)$. We now have four possibilities: the corresponding downfork in $\Phi_{G}(q)$ occurs either (i) at $v$, (ii) along a self-loop attached at $v$, (iii) along a self-loop attached at $w$, or (iv) at a downfork $z$ along an edge $E'$ connecting $v$ and $w$. Case (i) implies that $d_1 + L(C)/2$ is equal to $d(q,v)$. Moreover, if $v$ is a downfork for $q$, then symmetry (i.e. the vertex-flipping automorphism of the subgraph of $G$ obtained by removing self-loops) implies that $w$ is a downfork for $p$ {(as the downfork directions cannot be along self-loops, which move us further away from $p$ and $q$)}, so that there is a geodesic from $p$ to $w$ passing along an edge $E''$ connecting $v$ and $w$ that is distinct from $E$. Thus $d_1 + L(C)/2 = d(q,v) = d(p,w) = d_1 + L(E'')$, which is a nontrivial linear equality. Case (ii) is impossible as we have chosen $C$ to be the shortest possible loop edge, and $q$ is strictly further from the base of the loop than $p$. Case (iii) implies that $d_1 + L(C)/2 = d_1 + L(C')/2$, where $C'$ is a loop edge attached at $w$, giving a nontrivial linear equality. Lastly, case (iv) implies that $d_1 + L(C)/2 = \frac{1}{2}(L(E) + L(E'))$, another nontrivial linear equality.\\ 

Thus, if the failures of injectivity are those along topological self-loops, these are the only failures of injectivity, and we can reconstruct $G$ as before. The only other possibility is that $G$ consists of a pair of vertices joined by one or multiple edges, as on the left side of Figure \ref{fig:twovertexcollapse}.{ The shape of the resulting barcode transform, as seen in the right side of Figure \ref{fig:twovertexcollapse}, tells us that a failure of injectivity has occurred, as the barcode corresponding to the tip of the blue thorn tells us that the corresponding point has valence two. While the right-hand side looks identical to the barcode transform of a wedge of loops, the barcodes themselves are different. Consider again the blue thorn in the right-hand figure. If the original edge had length $L$, this blue thorn will have length $L/2$ in the intrinsic metric $\hat{d}_{B}$. However, the barcode at the tip of the blue thorn has no point of the form $(L/2,0)$, as would be the case for a self-loop. Indeed, the $(L/2)$-neighborhood of the corresponding point on the graph is just an edge, which is contractible and has no topology. Thus, when reconstructing $G$, we know to replace our thorns by edges and not self-loops.}
\begin{figure}[htb]
	    	\labellist
	\small\hair 2pt
	\pinlabel $L$ at 80 57
	\pinlabel $L/2$ at 208 86
	\endlabellist
    \centering
    \includegraphics{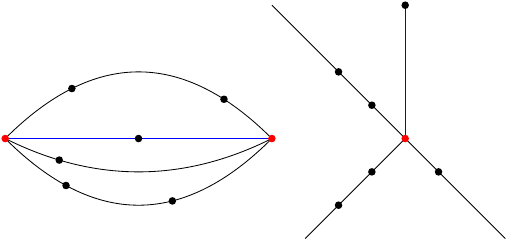}
    \caption{}
    \label{fig:twovertexcollapse}
\end{figure}
    
    Thus we have seen that, under our generic assumptions on the edge lengths of $G$, in all these cases when $\Psi_{G}$ fails to be injective, it is possible to detect that this has happened, that each resulting scenario can only arise in a unique way, and that there is a procedure to reconstruct $G$ from $\mathcal{BT}(G)$. 
    
\section{Theorem \ref{btlocalinj}: Local Injectivity}
\label{appbtlocalinj}

We will make use of a local injectivity result for Reeb graphs from \cite{localequiv} referenced in the background as Theorem \ref{ReebInequality}, setting $K=1/22$.\\

Let us first deal with the exceptional case when $G$ is a circle. We claim that if $G'$ is any other graph in the space $\mathbf{MGraphs}$ then $d_{PD}(G,G') > 0$. Observe first that the barcode transform of a circle of radius $c$ is the barcode consisting of the single interval $(0,c/2)$, and thus if $G'$ is a circle of a different circumference it cannot produce the same barcode. On the other hand, if $G'$ is not a circle then Proposition \ref{localisom} implies that $\mathcal{BT}(G')$ is not a single point, and hence it too cannot equal $\mathcal{BT}(G)$.\\ 

Next, let $G \in \mathbf{MGraphs}$ be any graph which is not a circle, and take any basepoint $x \in G$, giving rise to a barcode $\Psi_{G}(x)$. We will exhibit a constant $\epsilon(G,x) > 0$ such that if $G'$ is another metric graph with $0 < d(G,G') < \epsilon(G,x)$ then $\mathcal{BT}(G')$ omits the barcode $\Psi_{G}(x)$. Then since $\Psi_{G}$ and $\Psi_{G'}$ are continuous by Lemma \ref{psilip}, both $\mathcal{BT}(G)$ and $\mathcal{BT}(G')$ are compact subsets of Barcode space, and hence if they are not equal their Hausdorff distance is strictly positive: i.e. $d_{PD}(G,G') > 0$.\\

Firstly, let $a$ be the minimal distance between successive critical values for the Reeb graph $\Phi_{G}(x)$. Observe that Proposition \ref{localisom} implies that the basepoints which produce the same barcode as $x$ are isolated and hence the set of such basepoints, written $S_x$, is finite. Let $0 < r < a/32$. Let $\Omega_r = N_{r}(S_x)$ be the union of open neighborhoods of radius $r$ around points in $S$. Then $G \setminus \Omega_r$ is compact, and no point in this complement produces a barcode identical to $\Psi_{G}(x)$; continuity of $\Psi_{G}$ then implies that these barcodes are bounded away from $\Psi_{G}(x)$ by some constant $\delta_r > 0$.\\

Next, let $0 < \epsilon < \min (\delta_{r}/36, a/192)$. Suppose that $G'$ is another metric graph with $0 < d_{GH}(G,G') < \epsilon$, and take any $x' \in G'$. We will shot that $\Psi_{G'}(x') \neq \Psi_{G}(x)$. To see this, let $\mathcal{M}$ be any correspondence realizing the Gromov-Hausdorff distance between these two graphs, $\delta = d_{GH}(G,G')$; such a matching exists by the compactness of our graphs. Two cases emerge.\\

{\bf Case 1:} The correspondence $\mathcal{M}$ pairs $x'$ with a point $p \in \Omega_r$. at distance at most $r$ from some point $q \in S_x$.\\

{\bf Case 2:} The point $x'$ is paired with a point $p \in G \setminus \Omega_r$.\\

Let us first deal with case 1, and let $q \in S_x$ be a closest point to $p$, with $d(p,q) <r$. Since $\Psi_{G}(x) = \Psi_{G}(q)$, it will suffice to show that $\Psi_{G'}(x') \neq \Psi_{G}(q)$. We have seen in Lemma \ref{psilip} that the Reeb graphs $\Phi_{G}(q)$ and $\Phi_{G}(p)$ are within $r$ of each other in the FD distance. Moreover, because $p$ and $x'$ have been matched in $\mathcal{M}$, Theorem \ref{pairedgraphs} implies that their Reeb graphs are within 
\[6\delta < 6\epsilon < 6 \times \frac{a}{192} = \frac{a}{32}\]

of each other. Thus, applying the triangle inequality,

\[d_{FD}(\Phi_{G}(q),\Phi_{G'}(x')) < 2 \times \frac{a}{32} = \frac{a}{16}\]

Hence, applying Theorem \ref{ReebInequality}, we see that these two Reeb graphs can only produce identical barcodes if they are equal to each other. In that case, Corollary \ref{recovergraph} tells us that one can always recover the original metric graph from any of its Reeb graphs, so that $G' \simeq G'$, contradicting our assumption that $d_{GH}(G,G') > 0$.\\

Now let us consider case 2. By Theorem \ref{pairedgraphs}, the barcodes $\Psi_{G}(p)$ and $\Psi_{G'}(x')$ are within 
\[18 \delta < 18 \epsilon  < 18 \times \frac{\delta_r}{36} = \frac{\delta_r}{2}\]

of each other in the Bottleneck distance. But, as $p \in G \setminus \Omega_r$, the barcode $\Psi_{G}(p)$ is bounded away from $\Psi_{G}(x)$ by $\delta_r$. Applying the triangle inequality again, we find that $d_{B}(\Psi_{G}(x),\Psi_{G'}(x')) > \delta_r / 2 > 0$. $\square$

\section{Conclusion \& Discussion}

In this paper we reformulated and expanded upon the constructions of \cite{dey2015comparing} to analyze the inverse problem associated with the barcode transform, a complex homology-based invariant of metric structure, in the setting of metric graphs and metric measure graphs. We demonstrated that the barcode transform is locally injective on all of $\mathbf{MGraphs}$ and globally injective on a generic subset of $\mathbf{MGraphs}$. We saw, moreover, at the end of Section \ref{sec: selfloops}, that there are non-isometric pairs of graphs whose barcode transforms produce distinct but intrinsically isometric subsets of barcode space. Thus, there is strictly more information in $\mathcal{BT}(G)$ than its metric type as a subspace of $\mathbf{Barcodes}$. There are many possible directions for future research.

\begin{itemize}
    \item Extending the results of this paper to metric simplicial complexes or triangulated manifolds of higher dimension.
    \item Studying families of barcodes coming from other functions defined on graphs, such as heat kernels or diffusion distances.
    \item Incorporating this framework into the language of discrete Morse theory to improve computation and take advantage of the rich theory of Morse-type functions.
    \item More specifically, our injectivity results imply that the barcode transform often contains all the geometry of $G$. While the persistence distortion distance is one way of using the barcode transform to define a metric, there may be many others that take more advantage of this geometric information (e.g. by using Wasserstein distances instead of Bottleneck distances).
\end{itemize}

In terms of applications, the barcode transform can be approximated by sampling a finite collection of basepoint in $G$ and appealing to Theorem \ref{perstable}. In addition, the barcode transform can be composed with various kernels to define a transform taking values in a Hilbert space. This promises a framework for performing statistics and learning on spaces of metric graphs. Work needs to be done on determining which kernels are ideal and how effective the resulting transform can be made.

\renewcommand{\thesection}{\Alph{section}}
\setcounter{section}{0}

\section{Rips filtrations on geodesic trees}
\label{sec:Rips-trees}

\begin{lemma}\label{lem:Cech-Rips-hyper}
Let $(X,d_X)$ be a geodesic tree. Then, the \v Cech and Rips filtrations on $X$ are equal up to a factor of $2$ in the parameter, that is:
\[
\forall t\geq 0,\ \mathrm{C}_t(X, d_X) = \mathrm{R}_{2t}(X, d_X).
\]
\end{lemma}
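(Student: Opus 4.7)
The inclusion $\mathrm{C}_t(X, d_X) \subseteq \mathrm{R}_{2t}(X, d_X)$ is automatic in any metric space: if some $c \in X$ lies in $\bigcap_i B(p_i, t)$, then $d(p_i, p_j) \leq d(p_i, c) + d(c, p_j) \leq 2t$. The content of the lemma is the reverse inclusion, which I would reduce, by unwinding the definition of the \v{C}ech filtration via the nerve of closed balls in $X$, to the following claim: for every finite $S = \{p_0, \ldots, p_n\} \subseteq X$ with $\mathrm{diam}(S) \leq 2t$, there exists $\mu \in X$ satisfying $d(\mu, p_i) \leq t$ for all $i$.

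To produce such a $\mu$, I would pick indices $a, b$ realizing the diameter $D := \mathrm{diam}(S) = d(p_a, p_b) \leq 2t$ and set $\mu$ to be the midpoint of the unique geodesic segment $[p_a, p_b] \subset X$. The two endpoint bounds $d(\mu, p_a) = d(\mu, p_b) = D/2 \leq t$ are clear, so the only task left is to check $d(\mu, p_k) \leq D/2$ for each remaining $k$. The essential tool is the \emph{median point property} of geodesic trees: for any three points $p, q, r \in X$, the three geodesic segments $[p,q]$, $[p,r]$, $[q,r]$ share a unique common point $m(p,q,r)$. This is equivalent to $0$-hyperbolicity, and I would invoke it as a standard fact about trees.

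Applying the median property to the triple $(p_a, p_b, p_k)$ and writing $\alpha = d(p_a, m)$, $\beta = d(p_b, m)$, $\gamma = d(p_k, m)$, one gets $\alpha + \beta = D$, $\alpha + \gamma = d(p_a, p_k) \leq D$, and $\beta + \gamma = d(p_b, p_k) \leq D$, forcing $\gamma \leq \min(\alpha, \beta)$. Since $m$ lies on $[p_a, p_b]$ at distance $\alpha$ from $p_a$ and $\mu$ lies on the same segment at distance $D/2$ from $p_a$, we have $d(\mu, m) = |\beta - \alpha|/2$; moreover, the unique geodesic from $\mu$ to $p_k$ passes through $m$ (the branch point of the tripod spanned by $p_a, p_b, p_k$), so
\[
d(\mu, p_k) \;=\; \tfrac{|\beta - \alpha|}{2} + \gamma \;\leq\; \tfrac{|\beta - \alpha|}{2} + \min(\alpha, \beta) \;=\; \tfrac{\alpha + \beta}{2} \;=\; \tfrac{D}{2} \;\leq\; t.
\]
The main issue is expository rather than mathematical: stating the median-point property precisely, and observing that the estimate depends only on the triple $(p_a, p_b, p_k)$, so a single choice of $\mu$ works uniformly in $k$. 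As a byproduct, the argument gives equality simplex-by-simplex, not merely at the level of homotopy types.
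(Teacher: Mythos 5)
Your proof is correct, but it follows a genuinely different route from the paper's. The paper reduces the lemma to a Helly-type theorem for convex subsets of a geodesic tree: if $X_0,\dots,X_n$ are convex and pairwise intersecting, then $\bigcap_i X_i\neq\emptyset$. This is proved by induction on $n$, with the base case $n=2$ supplied by the tripod property (the branch point of the geodesic triangle on $x_{01},x_{12},x_{20}$ lies in all three sets by convexity), and the inductive step obtained by intersecting everything with $X_n$, which is again a tree. Applied to the balls $B(p_i,t)$, this gives the statement without ever naming a common point. You instead prove the sharper, quantitative fact that in a tree the circumradius of a finite set equals half its diameter, exhibiting the circumcenter explicitly as the midpoint of a diametral geodesic; your tripod computation $d(\mu,p_k)=\tfrac{|\beta-\alpha|}{2}+\gamma\leq \tfrac{\alpha+\beta}{2}$ is correct, as is the observation that the geodesic from $\mu$ to $p_k$ passes through the median $m$ (since $\mu$ lies on $[p_a,p_b]$ and $m$ is where $[p_k,p_a]$ and $[p_k,p_b]$ meet that segment). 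What each approach buys: yours is induction-free, constructive, and yields the optimal ``Jung constant'' for trees as a byproduct; the paper's is shorter once Helly is isolated, works uniformly for arbitrary convex sets rather than just balls, and is the form of the argument that generalizes most readily. One small expository point for your write-up: fix a convention (open vs.\ closed balls) for the \v Cech complex; your midpoint argument handles either, since $\mathrm{diam}(S)<2t$ gives the strict inequality $d(\mu,p_k)<t$ when needed.
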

\begin{proof}
It suffices to show the following statement:
\[
\forall n\geq 2,\ \forall X_0, \cdots, X_n \subseteq  X\ \mbox{convex},\ \left[X_i\cap X_j \neq \emptyset\quad \forall\, 0\leq i\leq j\leq n\right] \Longleftrightarrow \left[ \bigcap_{i=0}^n X_i \neq \emptyset\right].
\]
The proof is by recurrence. Note that the direction $\Leftarrow$ is trivial, so we show the direction $\Rightarrow$.

The base case ($n=2$) follows from the fact that every geodesic triangle in the tree $X$ is a tripod, $X$ being a $0$-hyperbolic space in the sense of Gromov---see e.g. Section~1 in Chapter~III.H of~\cite{bridson2013metric}. The details are as follows: pick a point $x_{ij}\in X_i\cap X_j$ for every $i,j$, then form the (unique) geodesic triangle $[x_{01}, x_{12}, x_{20}]$. Since the triangle is a tripod, there is a point $p$ at the intersection between the three geodesics $[x_{01}, x_{12}]$, $[x_{12}, x_{20}]$, $[x_{20}, x_{01}]$. Now, for every $i,j,k$ we have $[x_{ij}, x_{jk}]\subseteq X_j$ by convexity, and so $p\in X_0\cap X_1 \cap X_2$. This concludes the base case. 

For the recurrence step, assume the result holds for some $(n-1)\geq 2$ and let us prove it for $n$. Take $X_0, \cdots, X_n\subseteq X$ convex such that $X_i\cap X_j \neq\emptyset$ for all $0\leq i\leq j \leq n$. Then, the base case implies that $X_i\cap X_j\cap X_n\neq \emptyset$ for all $0\leq i\leq j \leq n-1$. Moreover, the sets $X_i\cap X_j$ are convex in $X_n$, which itself is a geodesic tree (as a convex subset of the tree $X$).  Hence, the case $(n-1)$ implies that $\bigcap_{i=0}^{n-1} (X_i\cap X_n)\neq \emptyset$, and so $\bigcap_{i=0}^n X_i \neq \emptyset$. This concludes the recurrence, and thereby the proof of the lemma. 
\end{proof}

\bibliographystyle{plain}
\bibliography{mybib}

\end{document}